\documentclass[amstex,12pt,reqno]{amsart}
\usepackage{amsmath, amssymb, amsthm}
\usepackage{mathrsfs}

\usepackage{mathtools}
\mathtoolsset{showonlyrefs=true}

\setlength{\topmargin}{0cm}
\setlength{\oddsidemargin}{0cm}
\setlength{\evensidemargin}{0cm}
\setlength{\textwidth}{16cm}
\setlength{\footskip}{30pt}

\theoremstyle{plain}
\newtheorem{theorem}{Theorem}[section]
\newtheorem{proposition}[theorem]{Proposition}
\newtheorem{lemma}[theorem]{Lemma}

\newtheorem{corollary}[theorem]{Corollary}

\theoremstyle{definition}
\newtheorem{remark}{Remark}[section]
\newtheorem{definition}{Definition}[section]

\title[Integrable Teich\-m\"ul\-ler spaces and Weil--Petersson curves]
{Integrable Teich\-m\"ul\-ler spaces \\for analysis on Weil--Petersson curves}
\author[K. Matsuzaki]{Katsuhiko Matsuzaki}
\address{Department of Mathematics, School of Education, Waseda University \endgraf
Shinjuku, Tokyo 169-8050, Japan}
\email{matsuzak@waseda.jp}

\subjclass[2020]{Primary 30C62, 30H25, 42A50; Secondary 30F60, 30E20, 46E35}

\keywords{integrable Teich\-m\"ul\-ler space, Weil--Petersson curve, Besov space,
pre-Bers embedding,
composition operator, interpolation, conformal welding, Szeg\"o projection, Hilbert transform,
Cauchy integral, BMO, chord-arc curve  
}

\thanks{Research supported by supported by 
Japan Society for the Promotion of Science (KAKENHI 23K25775 and 23K17656).}

\begin{document}

\maketitle

\begin{abstract}
The integrable Teich\-m\"ul\-ler space $T_p$ for $p \geq 1$ is defined by the $p$-integrability of Beltrami coefficients. 
We characterize a quasisymmetric homeomorphism $h$ in $T_p$ by the condition that $\log h'$ belongs to the real $p$-Besov space, 
with a certain modification applied in the case $p=1$. This is done as part of the arguments 
for establishing a biholomorphic correspondence $\Lambda$ from the product of $T_p$ 
for simultaneous uniformization of $p$-Weil--Petersson curves into the $p$-Besov space. 
In particular, this proves the real-analytic equivalence between $T_p$ and the real $p$-Besov space. 
Moreover, the Cauchy transform of Besov functions on Weil--Petersson curves 
can be expressed by the derivative of this holomorphic map $\Lambda$, and from this, the Calder\'on theorem in this setting is straightforward. It also follows that the Cauchy transforms on $p$-Weil--Petersson curves 
holomorphically depend on their embeddings as they vary in the Bers coordinates.
\end{abstract}

\section{Introduction}\label{1}

The integrable Teich\-m\"ul\-ler space has recently garnered interest as a parameter space for Weil--Petersson curves
(see \cite{Bi}). We have made attempts to extend the theory, including generalizing the exponent of integrability and incorporating discussions from harmonic analysis into the complex-analytic Teich\-m\"ul\-ler space theory. In this paper, we explore a direction of research that more clearly organizes the existing arguments through the method of simultaneous uniformization of curves from the perspective of quasiconformal Teich\-m\"ul\-ler theory.

Integrable Teich\-m\"ul\-ler spaces $T_p$ have been formulated as subspaces of the universal Teich\-m\"ul\-ler space $T$, extending the definition of the Weil--Petersson metric on the Teich\-m\"ul\-ler space of compact Riemann surfaces. This forms the basis for the complex analytic theory of Weil--Petersson curves. Notable studies include Cui's research on the complex structure of the integrable Teich\-m\"ul\-ler space \cite{Cu}, Takhtajan and Teo's study of the curvature of the Weil--Petersson metric \cite{TT}, and Shen's research on quasiconformal Teich\-m\"ul\-ler space theory \cite{Sh18}.

The universal Teich\-m\"ul\-ler space $T$ can be defined based on the complex dilatation $\mu$ of quasiconformal homeomorphisms, while the integrable Teich\-m\"ul\-ler space $T_p$ is defined by imposing $L_p$-integrability on $\mu$ with respect to the hyperbolic metric. Originally, the theory was for the case $p=2$, but research on generalizing the exponent is also progressing. Extensions to $p \geq 2$ were made by Tang and Shen \cite{TS}, and to $p > 1$ by Wei and Matsuzaki \cite{WM-3}. Furthermore, in \cite{WM-1,WM-4}, the complex structure through the Schwarzian derivative has been given to $T_p$ for all $p \geq 1$. This paper also focuses on the methods for characterizing quasisymmetric mappings of $T_1$ through pre-Schwarzian derivatives.

The universal Teich\-m\"ul\-ler space $T$ is identified with the set of all quasisymmetric homeo\-morphisms $h:\mathbb R \to \mathbb R$ under appropriate normalization. However, they are not necessarily absolutely continuous. To conduct a finer analysis, better regularity is required. Teich\-m\"ul\-ler spaces consisting of locally absolutely continuous quasisymmetric homeomorphisms are collectively called absolutely continuous Teich\-m\"ul\-ler spaces. The BMO Teich\-m\"ul\-ler space, introduced by Astala and Zinsmeister \cite{AZ}, is a typical example that includes various absolutely continuous Teich\-m\"ul\-ler spaces such as $T_p$, and has been well-organized through research by Shen and Wei \cite{SWei}. Current research trends include investigating problems in real-analysis within the framework of Teich\-m\"ul\-ler space theory and vice versa, which can also be applied to the study of integrable Teich\-m\"ul\-ler spaces $T_p$.

In the theory of absolutely continuous Teich\-m\"ul\-ler spaces, we analyze the function spaces to which $\log h'$ for a quasisymmetric homeomorphism $h$ belong. In this paper, we first consider this problem for $T_p$. A special type of the Besov space $\widehat B_p(\mathbb R)$, which is a kind of fractional dimensional Sobolev space on $\mathbb R$, serves as this target space. By generalizing the previous results for $p > 1$, we establish the following characterization of $T_p$ for all $p \geq 1$. Here, ${\rm Re}\,\widehat B_p(\mathbb R)$ stands for the real subspace of $\widehat B_p(\mathbb R)$ consisting of all real-valued functions.

\begin{theorem}\label{theorem10}
For $p \geq 1$, a quasisymmetric homeomorphism $h: \mathbb R \to \mathbb R$ is an element of $T_p$ if and only if $h$ is locally absolutely continuous and $\log h'$ belongs to ${\rm Re}\,\widehat B_p(\mathbb R)$.
\end{theorem}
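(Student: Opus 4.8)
The plan is to prove the two directions by passing through the conformal welding of $h$ and the characterization of $p$-Weil--Petersson curves by the pre-Schwarzian (pre-Bers) embedding, so that the target quantity $\log h'$ is recovered from boundary traces of holomorphic Besov functions. Throughout I would work with the harmonic-extension description of ${\rm Re}\,\widehat B_p(\mathbb R)$: for $u$ on $\mathbb R$ with Poisson extension $U$ to the upper half-plane $\mathbb H$, membership $u \in {\rm Re}\,\widehat B_p(\mathbb R)$ is measured by $\int_{\mathbb H}|\nabla U(x,y)|^p\, y^{p-2}\,dx\,dy < \infty$, since this form survives the endpoint $p=1$, where the naive double integral $\int\!\!\int |u(x)-u(y)|/|x-y|^2$ diverges and the ``certain modification'' is forced. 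On the Teich\-m\"ul\-ler side, $h \in T_p$ is recorded by a quasiconformal extension $F$ of $h$ to $\mathbb H$ whose Beltrami coefficient $\mu_F$ satisfies $\int_{\mathbb H}|\mu_F(z)|^p\, y^{-2}\,dx\,dy < \infty$. The whole proof then reduces to an equivalence between these two integral quantities, where the weights $y^{p-2}$ and $y^{-2}$ are reconciled by a factor $y^p$ reflecting that the Beltrami coefficient registers $\log h'$ differentiated at the hyperbolic scale $y$, heuristically $\mu_F \sim y\,\nabla U$.

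For the forward implication I would first note that $T_p$ sits inside the BMO Teich\-m\"ul\-ler space, so every $h \in T_p$ is locally absolutely continuous with $\log h' \in {\rm BMO}$; the point is then to upgrade this to Besov regularity. To do so, form the conformal welding $h = g^{-1}\circ f$, where $f$ maps $\mathbb H$ onto the interior and $g$ maps the exterior onto the exterior of the welding curve. By the pre-Bers embedding, which I would establish by extending the known $p>1$ statements to all $p \geq 1$, the membership $h \in T_p$ is equivalent to $\log f'$ and $\log g'$ lying in the holomorphic $p$-Besov spaces of the two half-planes. Taking nontangential boundary traces gives $\log|f'|,\ \log|g'| \in {\rm Re}\,\widehat B_p(\mathbb R)$, and the welding identity differentiates to $\log h' = \log|f'| - (\log|g'|)\circ h$ on $\mathbb R$. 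Hence the two remaining inputs are: the trace theorem identifying the boundary values of holomorphic Besov functions with ${\rm Re}\,\widehat B_p(\mathbb R)$, where passage between the real part $\log|f'|$ and the full $\log f'$ uses boundedness of the Hilbert transform, equivalently the Szeg\"o projection, on $\widehat B_p$; and the boundedness of the composition operator $u \mapsto u\circ h$ on ${\rm Re}\,\widehat B_p(\mathbb R)$ for $h \in T_p$. Granting these, $\log h' \in {\rm Re}\,\widehat B_p(\mathbb R)$ follows.

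For the converse, given a locally absolutely continuous $h$ with $\log h' \in {\rm Re}\,\widehat B_p(\mathbb R)$, I would reverse the construction: use the Hilbert transform to produce holomorphic functions on the two half-planes with prescribed real boundary data built from $\log h'$, integrate to conformal maps $f,g$ with $\log f',\ \log g'$ in the holomorphic $p$-Besov spaces, and verify that their welding reproduces $h$; the pre-Bers embedding then places $h$ in $T_p$. Alternatively one can extend $\log h'$ harmonically and estimate directly the Beltrami coefficient of a conformally natural (barycentric) extension of $h$, converting the bound $\int|\nabla U|^p y^{p-2}$ into $\int|\mu_F|^p y^{-2}$. The main obstacle throughout is the endpoint $p=1$: both the definition of $\widehat B_1$ and the boundedness of the Hilbert transform and of the composition operator are delicate under the naive Besov norm, so the estimates must be carried out in the modified space, with an interpolation argument bridging from the already-available range $p>1$ down to $p=1$. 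I expect the composition-operator bound, controlling $u\circ h$ by $u$ together with $\log h'$, to be the most delicate step, since it is precisely where the nonlinear dependence on $h$ and the borderline regularity at $p=1$ interact.
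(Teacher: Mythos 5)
Your forward direction follows the paper's own skeleton (conformal welding, pre-Bers embedding, boundary trace, composition operator), but two of your foundations break precisely at the endpoint $p=1$, which is the main content of the theorem. First, the harmonic-extension seminorm $\int_{\mathbb H}|\nabla U|^p\,y^{p-2}\,dx\,dy$ does \emph{not} survive $p=1$: there it reads $\int_{\mathbb H}|\nabla U|\,y^{-1}\,dx\,dy$, and writing $U={\rm Re}\,\Phi$ with $\Phi$ holomorphic this is exactly $\Vert\Phi\Vert_{\mathcal B_1}$, which is finite only for constant functions (see the remark in Section \ref{4} citing Zhu); so this form exhibits the same degeneracy as the first-difference double integral rather than curing it. The modification actually forced at $p=1$ is second order: $\widehat B_1(\mathbb R)=B_1^{\#}(\mathbb R)\cap{\rm BMO}(\mathbb R)$ (second differences) and $\widehat{\mathcal B}_1(\mathbb H)=\mathcal B_1^{\#}(\mathbb H)\cap{\rm BMOA}(\mathbb H)$ (second derivatives), so all of your estimates would have to be recast at second order. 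Second, your appeal to boundedness of $C_h$ on ${\rm Re}\,\widehat B_p(\mathbb R)$ ``for $h\in T_p$'' is circular at $p=1$: unlike Proposition \ref{p>1}, which holds for every quasisymmetric $h$ but only for $p>1$, the $p=1$ statement (Theorem \ref{composition}) carries the \emph{hypothesis} $\log h'\in{\dot W}^1_1(\mathbb R)$, which is part of what is being proved. The paper breaks this circle in the proof of Theorem \ref{main1}: from the welding identity \eqref{welding} one first obtains $\log h'\in{\dot W}^1_1(\mathbb R)$ directly, because the ${\dot W}^1_1$-seminorm is exactly invariant under composition with an increasing locally absolutely continuous homeomorphism (change of variables), and only then does Theorem \ref{composition} become applicable. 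Your proposal acknowledges that the composition bound is delicate but does not supply this bootstrap.

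For the converse, both of your routes have genuine gaps. The barycentric/harmonic-extension route (estimating the Beltrami coefficient of a conformally natural extension) is exactly what the paper states does not work at $p=1$: the Remark after Lemma \ref{converse} notes that the quasiconformal-extension proof (the heat-kernel Beurling--Ahlfors variant) is available only for $p>1$, and your version of it additionally rests on the degenerate first-order characterization above. The Hilbert-transform route is not a construction either: the decomposition of $\log h'$ relevant to welding is not the Szeg\"o decomposition $\widehat{\mathcal B}_p(\mathbb H^-)\oplus\widehat{\mathcal B}_p(\mathbb H^+)$ but the twisted one $C_{h^-}(\widehat{\mathcal B}_p(\mathbb H^-))\oplus C_{h^+}(\widehat{\mathcal B}_p(\mathbb H^+))$, which involves the unknown $h$ itself (cf.\ \eqref{welding}, where $\log f_2'$ enters composed with $h$); moreover nothing guarantees that integrating $e^{\Phi}$ for your candidate boundary data yields \emph{univalent} maps, nor that their welding reproduces $h$ --- that is the full nonlinear problem, not a verification. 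The paper's actual mechanism is indirect and is the real content of the `if' direction: $\Lambda$ is a holomorphic injection (Lemma \ref{holomorphic}) whose derivative at the origin is the identity (Proposition \ref{derivative0}), hence $\Lambda^{-1}$ exists holomorphically on a neighborhood $U$ of $0$; the translation relation $\Lambda\circ R_{[\nu]}=Q_h\circ\Lambda$ of \eqref{translation} transports this local surjectivity to a neighborhood $Q_{h_0}(U)$ of $\Lambda([\mu_0],[\bar\mu_0])$ for any real-analytic point $[\mu_0]\in T_0$, and density of ${\rm Re}\,B_0(\mathbb R)$ in ${\rm Re}\,\widehat B_p(\mathbb R)$ then reaches every $\phi\in{\rm Re}\,\widehat B_p(\mathbb R)$ (Lemma \ref{converse}), with reality of $\phi$ forcing the preimage onto the symmetric axis, i.e.\ into $T_p$. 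Your proposal contains no substitute for this inverse-mapping-theorem-plus-density argument.
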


There are several difficulties in proving this result in the case $p=1$ compared with the case $p>1$: (i) The Besov space $\widehat B_p(\mathbb R)$ for $p \geq 1$ should be more involved than $B_p(\mathbb R)$ for $p>1$; (ii) The boundedness of the composition operator $C_h$ on $\widehat B_p(\mathbb R)$, defined by $C_h(\phi)=\phi \circ h$, is newly presented in the case $p=1$ by using a different interpolation of Banach spaces from the case $p>1$; (iii) To prove the `if' part of the statement for $p=1$, the canonical method of quasiconformal extension of $h$ does not work because of the complicated definition of $\widehat B_p(\mathbb R)$.

Section \ref{3} addresses (i) by defining $\widehat B_p(\mathbb R)$, and Sections \ref{5} and \ref{6} address (ii) by showing the boundedness of $C_h$. The `only-if' part of Theorem \ref{theorem10} is proved in Theorem \ref{main1}. Due to difficulty (iii), the `if' part  
is proved implicitly by showing that the correspondence $h \mapsto \log h'$ is surjective from $T_p$ onto ${\rm Re}\,\widehat B_p(\mathbb R)$. The proof is completed in Section \ref{8}.

A basic method underlying the entire argument for Theorem \ref{theorem10} is conformal welding together with pre-Bers embedding and simultaneous uniformization, which are cross-sections of the same nature of Teich\-m\"ul\-ler spaces. By the conformal welding, any quasisymmetric homeomorphism $h$ of $\mathbb R$ can be represented as the difference of the boundary extensions of conformal homeomorphisms of the upper half-plane $\mathbb H^+$ and the lower half-plane $\mathbb H^-$. Moreover, if $h \in T_p$, then such conformal homeomorphisms extend quasiconformally to $\mathbb C$ with their complex dilatations in $M_p(\mathbb H^\pm)$, the spaces of $p$-integrable Beltrami coefficients.

Conversely, the conformal homeomorphisms $F^\mu$ of $\mathbb H^-$ for $\mu \in M_p(\mathbb H^+)$ represent the elements of $T_p^+$, the Teich\-m\"ul\-ler space defined by $M_p(\mathbb H^+)$. Here, $\log (F^\mu)'$ belongs to the analytic Besov space $\widehat{\mathcal B}_p(\mathbb H^-)$, the boundary extension of whose elements are in $\widehat B_p(\mathbb R)$. Then, we obtain the pre-Bers embedding $\beta:T_p^+ \to \widehat{\mathcal B}_p(\mathbb H^-)$. We note that for the Bers embedding given by the Schwarzian derivative of $F^\mu$, there is no difficulty in setting the appropriate complex Banach space of holomorphic functions in which $T_p$ is embedded, but for the pre-Bers embedding, the definition of $\widehat{\mathcal B}_p(\mathbb H^-)$ for $p=1$ is more involved like $\widehat B_p(\mathbb R)$. This is conducted in Section \ref{4}.

The restriction $\gamma=G|_{\mathbb R}$ of a quasiconformal self-homeomorphism $G$ to $\mathbb R$ is called a quasisymmetric embedding. As a special quasisymmetric embedding, if the complex dilatations of $G$ on $\mathbb H^\pm$ are in $M_p(\mathbb H^\pm)$, we call $\gamma:\mathbb R \to \mathbb C$ a $p$-Weil--Petersson embedding. A Weil--Petersson curve is its image. The idea of the simultaneous uniformization of quasi-Fuchsian groups also works for quasicircles (images of quasisymmetric embeddings), and thus in the same way the space of all $p$-Weil--Petersson embeddings is parametrized by the product of the Teich\-m\"ul\-ler spaces $T_p^+ \times T_p^-$, which we call the Bers coordinates. Then, by generalizing the `if' part of Theorem \ref{theorem10}, we see that $\log \gamma'$ belongs to $\widehat B_p(\mathbb R)$ for a $p$-Weil--Petersson embedding $\gamma=\gamma([\mu^+],[\mu^-])$ determined by $([\mu^+],[\mu^-]) \in T_p^+ \times T_p^-$. In Section \ref{7}, we prepare these arguments for the next two sections. Naturally, this correspondence 
$$
\Lambda:T_p^+ \times T_p^- \to \widehat B_p(\mathbb R), \quad \gamma([\mu^+],[\mu^-])  \mapsto \log \gamma'
$$ 
is a holomorphic injection.

By the pre-Bers embedding $\beta^{\pm}:T_p^\pm \to \widehat{\mathcal B}_p(\mathbb H^\mp)$, the tangent space of the Bers coordinates $T_p^+ \times T_p^-$ may be regarded as $\widehat{\mathcal B}_p(\mathbb H^-) \oplus \widehat{\mathcal B}_p(\mathbb H^+)$. On the other hand, the Szeg\"o projections 
$$
(P^\pm\phi)(z)=\frac{1}{\pi}\int_{\mathbb R} \frac{\phi(t)}{z-t}dt \qquad (z \in \mathbb H^\pm)
$$ 
in $\widehat B_p(\mathbb R)$ are surjective onto $\widehat{\mathcal B}_p(\mathbb H^\pm)$, and under the identification of $\widehat{\mathcal B}_p(\mathbb H^\pm)$ with the closed subspaces of $\widehat B_p(\mathbb R)$ by taking the boundary extension to $\mathbb R$, they yield the associated direct sum decomposition $\widehat B_p(\mathbb R)=\widehat{\mathcal B}_p(\mathbb H^-) \oplus \widehat{\mathcal B}_p(\mathbb H^+)$.

The derivative $d_{([0],[0])}\Lambda$ at the origin of $T_p^+ \times T_p^-$ is the identity map 
$\widehat{\mathcal B}_p(\mathbb H^-) \oplus \widehat{\mathcal B}_p(\mathbb H^+) \to \widehat{\mathcal B}_p(\mathbb H^-) \oplus \widehat{\mathcal B}_p(\mathbb H^+)$. From this, we see that the image of $\Lambda$ contains ${\rm Re}\,\widehat B_p(\mathbb R)$, which proves the `if' part of Theorem \ref{theorem10}. The derivative $d_{([\mu^+],[\mu^-])}\Lambda$ at any point of $T_p^+ \times T_p^-$ maps $\widehat{\mathcal B}_p(\mathbb H^-)$ to $C_{h^-}(\widehat{\mathcal B}_p(\mathbb H^-))$ and $\widehat{\mathcal B}_p(\mathbb H^+)$ to $C_{h^+}(\widehat{\mathcal B}_p(\mathbb H^+))$, where $C_{h^\pm}$ are the composition operators induced by the quasisymmetric homeomorphisms $h^\pm=[\mu^\pm]$ in $T_p^\pm$. Then, we have the surjectivity of the derivative of $\Lambda$ by showing that 
$$
\widehat B_p(\mathbb R)=C_{h^-}(\widehat{\mathcal B}_p(\mathbb H^-)) \oplus C_{h^+}(\widehat{\mathcal B}_p(\mathbb H^+)).
$$
As a consequence, we have the following crucial property of $\Lambda$. This is demonstrated in Theorem \ref{main2} and Corollary \ref{structure} of Section \ref{9}.

\begin{theorem}\label{theorem20}
The map $\Lambda:T_p^+ \times T_p^- \to \widehat B_p(\mathbb R)$ is a
biholomorphic homeo\-morphism onto its image
for $p \geq 1$. Moreover, the composition of $\Lambda$ with the canonical symmetric embedding $T_p \to T_p^+ \times T_p^-$ yields a real-analytic diffeomorphism of $T_p$ onto ${\rm Re}\,\widehat B_p(\mathbb R)$.
\end{theorem}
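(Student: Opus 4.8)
The plan is to deduce the biholomorphism from the holomorphic inverse mapping theorem in Banach spaces, so that the whole matter reduces to the bounded invertibility of the derivative. Since it is already known that $\Lambda$ is a holomorphic injection, once I verify that $d_{([\mu^+],[\mu^-])}\Lambda$ is a bounded linear isomorphism of the tangent space $\widehat{\mathcal B}_p(\mathbb H^-)\oplus\widehat{\mathcal B}_p(\mathbb H^+)$ onto $\widehat B_p(\mathbb R)$ at every point, the inverse mapping theorem makes $\Lambda$ a local biholomorphism; in particular it is an open map whose local inverses are holomorphic. Global injectivity then forces the image to be open and $\Lambda$ to be a biholomorphic homeomorphism onto it.

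So the real content is the invertibility of $d\Lambda$. Recall from the discussion preceding the statement that $d_{([\mu^+],[\mu^-])}\Lambda$ sends the first summand into $C_{h^-}(\widehat{\mathcal B}_p(\mathbb H^-))$ and the second into $C_{h^+}(\widehat{\mathcal B}_p(\mathbb H^+))$. Because the composition operators $C_{h^\pm}$ are bounded with bounded inverses $C_{(h^\pm)^{-1}}$ (established in Sections \ref{5}--\ref{6}), each of these restrictions is already a Banach-space isomorphism onto its image. Hence invertibility of $d\Lambda$ is equivalent to the topological direct sum decomposition
$$
\widehat B_p(\mathbb R)=C_{h^-}(\widehat{\mathcal B}_p(\mathbb H^-))\oplus C_{h^+}(\widehat{\mathcal B}_p(\mathbb H^+)).
$$
I would establish this by transporting the base case $\widehat B_p(\mathbb R)=\widehat{\mathcal B}_p(\mathbb H^-)\oplus\widehat{\mathcal B}_p(\mathbb H^+)$, furnished by the Szeg\"o projections $P^\pm$, along the conformal welding attached to $([\mu^+],[\mu^-])$: the two complementary projections onto $C_{h^\pm}(\widehat{\mathcal B}_p(\mathbb H^\pm))$ are precisely the Cauchy--Szeg\"o projections read off the $p$-Weil--Petersson curve, and their boundedness and complementarity amount to the Calder\'on-type estimate for the Cauchy transform on that curve. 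This is the step I expect to be the main obstacle, and the one where the case $p=1$ is delicate, since there the required boundedness of $C_{h^\pm}$ rests on the special interpolation argument rather than the direct estimates available for $p>1$.

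For the second assertion, consider the canonical symmetric embedding $\iota:T_p\to T_p^+\times T_p^-$, which is real-analytic, and form $\Lambda\circ\iota$. A symmetric configuration welds to a quasisymmetric homeomorphism $h:\mathbb R\to\mathbb R$, so $\log h'=\Lambda(\iota([\mu]))$ is real-valued, whence the image of $\Lambda\circ\iota$ lies in ${\rm Re}\,\widehat B_p(\mathbb R)$; by the `if' part of Theorem \ref{theorem10} it exhausts ${\rm Re}\,\widehat B_p(\mathbb R)$. At a symmetric point the two summands $C_{h^-}(\widehat{\mathcal B}_p(\mathbb H^-))$ and $C_{h^+}(\widehat{\mathcal B}_p(\mathbb H^+))$ are interchanged by complex conjugation, so the restriction of the isomorphism $d\Lambda$ to the real tangent subspace cut out by $\iota$ is a bounded isomorphism onto ${\rm Re}\,\widehat B_p(\mathbb R)$. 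Since $\Lambda\circ\iota$ is real-analytic with invertible derivative everywhere and is a bijection onto ${\rm Re}\,\widehat B_p(\mathbb R)$, the real-analytic inverse mapping theorem upgrades it to a real-analytic diffeomorphism of $T_p$ onto ${\rm Re}\,\widehat B_p(\mathbb R)$.
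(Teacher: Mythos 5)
Your overall scaffolding is sound and agrees with the paper's: $\Lambda$ is a holomorphic injection, the restrictions of $d_{([\mu^+],[\mu^-])}\Lambda$ to the two tangent summands are isomorphisms onto $C_{h^-}(\widehat{\mathcal B}_p(\mathbb H^-))$ and $C_{h^+}(\widehat{\mathcal B}_p(\mathbb H^+))$ (this is Proposition \ref{image}), so invertibility of the derivative is equivalent to the topological decomposition $\widehat B_p(\mathbb R)=C_{h^-}(\widehat{\mathcal B}_p(\mathbb H^-))\oplus C_{h^+}(\widehat{\mathcal B}_p(\mathbb H^+))$, and the inverse mapping theorem then finishes the first assertion. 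Your treatment of the second assertion (restriction to the symmetric submanifold, surjectivity onto ${\rm Re}\,\widehat B_p(\mathbb R)$, real-analytic inverse mapping theorem) also matches the paper's Corollary \ref{structure}.

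The genuine gap is that you never prove the decomposition; you only name it. Your plan is to obtain it from ``the Calder\'on-type estimate for the Cauchy transform'' on $\Gamma=\gamma(\mathbb R)$, i.e.\ from the boundedness and complementarity of the Cauchy projections $P^\pm_\Gamma$ on $\widehat B_p(\gamma(\mathbb R))$. But in this paper that statement is exactly Theorem \ref{Cauchy} and Corollary \ref{Calderon}, which are deduced \emph{from} Theorem \ref{main2} and the resulting decomposition \eqref{sum}; invoking it here is circular unless you supply an independent proof. For $p>1$ one could try to import boundedness of $P^\pm_\Gamma$ into the analytic Besov spaces from \cite{LS2}, but even then the spanning property of the two subspaces does not come for free, and for $p=1$ --- the case the paper is principally designed to cover --- no such exterior result exists. (You also locate the $p=1$ delicacy in the boundedness of $C_{h^\pm}$, but that is already settled by Theorem \ref{composition} and is not the obstruction.) The paper's route to surjectivity of $d\Lambda$ is entirely different and avoids any a priori Cauchy-integral estimate: by the translation relation \eqref{translation} and Lemma \ref{converse} it suffices to treat points whose image lies in $Z_p=i\,{\rm Re}\,\widehat B_p(\mathbb R)\cap{\rm Ran}\,\Lambda$; at such a point $\phi_0$, a short segment $\phi_0+tv$ with $v\in i\,{\rm Re}\,\widehat B_p(\mathbb R)$ is pulled back by the chord-arc biholomorphism $\widetilde\Lambda^{-1}$ of Proposition \ref{biholo-C}, and \cite[Proposition 5.5]{WM-4} guarantees that these arc-length-parametrized chord-arc curves are in fact $p$-Weil--Petersson embeddings, which places $i\,{\rm Re}\,\widehat B_p(\mathbb R)$ inside ${\rm Ran}\,d\Lambda$; finally the Szeg\"o-projection trick of Lemma \ref{imaginary} upgrades this real half of the directions to full surjectivity. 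Without reproducing some version of that argument, or an independent proof of the Calder\'on theorem on Weil--Petersson curves valid down to $p=1$, your proof does not close.
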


Therefore, the integrable Teich\-m\"ul\-ler space $T_p$ for every
$p \geq 1$ can be identified with the real Banach space ${\rm Re}\,\widehat B_p(\mathbb R)$
with respect to the real-analytic structure.

This biholomorphic homeomorphism $\Lambda$ also works effectively for the investigation of certain problems in real analysis concerning Weil--Petersson curves. For an overview of real analytic studies of bi-Lipschitz embeddings of $\mathbb R$ onto chord-arc curves, Coifman and Meyer \cite{CM0} and Semmes \cite{Se} provide comprehensive explanations, which motivate our work. The former focuses on such embeddings using harmonic analytic methods, while the latter utilizes theories of quasiconformal mappings. We extend these studies to the context of Weil--Petersson embeddings.

Our strategy is that by the biholomorphic homeomorphism $\Lambda$ from the Bers coordinates $T_p^+ \times T_p^-$ of the $p$-Weil--Petersson embeddings to the Besov space $\widehat B_p(\mathbb R)$, we relate the derivative of $\Lambda$ at $([\mu^+],[\mu^-]) \in T_p^+ \times T_p^-$ to the Cauchy projection with respect to the Weil--Petersson curve $\Gamma=\gamma(\mathbb R)$ for $\gamma=\gamma([\mu^+],[\mu^-])$. 
Let $\Omega^\pm$ be the domains in $\mathbb C$ separated by $\Gamma$. The Cauchy integrals 
$$
({P}^\pm_\Gamma \psi)(\zeta)=\frac{1}{\pi}\int_{\Gamma} \frac{\psi(z)}{\zeta-z}dz \quad (\zeta \in \Omega^\pm)
$$
of an integrable function $\psi$ on $\Gamma$ yield holomorphic functions on $\Omega^\pm$. We consider the Besov space 
$\widehat B_p(\gamma(\mathbb R))$ on $\Gamma$ as the push-forward of $\widehat B_p(\mathbb R)$ by $\gamma$ 
and the analytic Besov spaces $\widehat {\mathcal B}_p(\Omega^\pm)$ on $\Omega^\pm$ 
as the push-forward of $\widehat {\mathcal B}_p(\mathbb H^\pm)$ by the normalized Riemann mappings $\mathbb H^\pm \to \Omega^\pm$. Then, the Cauchy integrals induce the projections $P_\Gamma^\pm:\widehat B_p(\gamma(\mathbb R)) \to \widehat {\mathcal B}_p(\Omega^\pm)$. By taking the boundary extension, we assume that $\widehat {\mathcal B}_p(\Omega^\pm)$ are closed subspaces of $\widehat B_p(\gamma(\mathbb R))$. 

On the other hand, the isomorphic derivative $d_{([\mu^+],[\mu^-])}\Lambda$ of the biholomorphic homeo\-morphism $\Lambda$ conjugates the projections of the tangent space of $T_p^+ \times T_p^-$ at $([\mu^+],[\mu^-])$ onto both factors with the projections regarding the direct sum decomposition 
$\widehat{B}_p(\mathbb R)=C_{h^-}(\widehat{\mathcal B}_p(\mathbb H^-)) \oplus C_{h^+}(\widehat{\mathcal B}_p(\mathbb H^+))$, which are
$$
P_{([\mu^+],[\mu^-])}^\pm:\widehat{B}_p(\mathbb R) \to C_{h^\pm}(\widehat{\mathcal B}_p(\mathbb H^\pm)).
$$
Then, a novel aspect of our argument lies in the assertion that the Cauchy projections ${P}^\pm_\Gamma$ are 
the conjugates of $P_{([\mu^+],[\mu^-])}^\pm$ by the push-forward $\gamma_*$. By this relationship, we can reduce the properties of the Cauchy projections ${P}^\pm_\Gamma$ for the embedding $\gamma=\gamma([\mu^+],[\mu^-])$ to the derivative $d_{([\mu^+],[\mu^-])}\Lambda$ of the biholomorphic homeomorphism.

One of the consequences from this argument
is as follows, which is given in Theorem \ref{Cauchy} and Corollary \ref{Calderon} of Section \ref{10}. 
This problem originates in Cardel\'on's work.

\begin{theorem}\label{theorem30}
In the Besov space $\widehat B_p(\gamma(\mathbb R))$ on the $p$-Weil--Petersson curve $\Gamma=\gamma(\mathbb R)$
of $\gamma=\gamma([\mu^+],[\mu^-])$ for $p \geq 1$, the Cauchy projections $P^\pm_\Gamma$ 
coincide with $\gamma_* \circ P^\pm_{([\mu^+],[\mu^-])} \circ \gamma_*^{-1}$,
which
are bounded linear projections onto $\widehat {\mathcal B}_p(\Omega^\pm)$ associated with the topological direct sum decomposition
$$
\widehat B_p(\gamma(\mathbb R)) = \widehat {\mathcal B}_p(\Omega^+) \oplus \widehat {\mathcal B}_p(\Omega^-).
$$
\end{theorem}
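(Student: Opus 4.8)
The plan is to transport the whole picture from the curve $\Gamma$ back to $\mathbb R$ by the isometric push-forward $\gamma_*$, and to reduce the assertion to the topological direct sum decomposition of $\widehat B_p(\mathbb R)$ obtained in Section \ref{9}. The geometric bridge is conformal welding (Section \ref{7}): the normalized Riemann maps $\Phi^\pm\colon \mathbb H^\pm \to \Omega^\pm$ satisfy $\gamma = \Phi^\pm \circ h^\pm$ on $\mathbb R$, where $h^\pm = [\mu^\pm]$. Since $\widehat{\mathcal B}_p(\Omega^\pm)$ is by definition the push-forward of $\widehat{\mathcal B}_p(\mathbb H^\pm)$ by $\Phi^\pm$, regarded as a boundary-value subspace, the welding identity $(\Phi^\pm)^{-1}\circ\gamma = h^\pm$ shows that $\gamma_*^{-1}$ carries $\widehat{\mathcal B}_p(\Omega^\pm)$ exactly onto $C_{h^\pm}(\widehat{\mathcal B}_p(\mathbb H^\pm))$. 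Thus the candidate decomposition $\widehat B_p(\gamma(\mathbb R)) = \widehat{\mathcal B}_p(\Omega^+)\oplus\widehat{\mathcal B}_p(\Omega^-)$ is precisely the $\gamma_*$-image of $\widehat B_p(\mathbb R) = C_{h^-}(\widehat{\mathcal B}_p(\mathbb H^-))\oplus C_{h^+}(\widehat{\mathcal B}_p(\mathbb H^+))$, which is topological by Theorem \ref{main2}; in particular $\gamma_*\circ P^\pm_{([\mu^+],[\mu^-])}\circ\gamma_*^{-1}$ are bounded projections onto $\widehat{\mathcal B}_p(\Omega^\pm)$ along $\widehat{\mathcal B}_p(\Omega^\mp)$.

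It remains to identify the Cauchy integrals $P^\pm_\Gamma$ with these transported projections. Since a projection is determined by its range and kernel and the above splitting of $\widehat B_p(\gamma(\mathbb R))$ is in hand, it suffices to verify the two defining properties of the Cauchy integral for this decomposition: property (a), that $P^+_\Gamma$ reproduces every $\psi \in \widehat{\mathcal B}_p(\Omega^+)$, and property (b), that $P^+_\Gamma$ annihilates every $\psi \in \widehat{\mathcal B}_p(\Omega^-)$, together with the symmetric statements for $P^-_\Gamma$. Property (a) is a form of Cauchy's integral formula for functions holomorphic in $\Omega^+$, while property (b) holds because for $\psi \in \widehat{\mathcal B}_p(\Omega^-)$ the integrand $\psi(z)/(\zeta - z)$ with $\zeta \in \Omega^+$ extends holomorphically across $\Omega^-$, so the contour integral over $\Gamma$ vanishes. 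To make these classical facts rigorous on the merely Weil--Petersson curve $\Gamma$ and for Besov data, I would pull the integral back through $\Phi^\pm$: the substitution $z = \Phi^\pm(t)$ turns $P^\pm_\Gamma$ into an operator on $\mathbb R$ whose kernel differs from the Szeg\"o kernel $1/(w-t)$ by a term holomorphic in $w \in \mathbb H^\pm$, so the correction stays inside $\widehat{\mathcal B}_p(\mathbb H^\pm)$ and does not disturb the dichotomy (a)--(b). In this way the two properties reduce to the corresponding properties of the Szeg\"o projections $P^\pm$ on $\mathbb R$ established in Section \ref{9}.

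Granting this identification, the boundedness asserted in the statement is automatic: $P^\pm_\Gamma$ equals $\gamma_*\circ P^\pm_{([\mu^+],[\mu^-])}\circ\gamma_*^{-1}$, which is bounded because $P^\pm_{([\mu^+],[\mu^-])}$ is bounded (Theorem \ref{main2}, Corollary \ref{structure}) and $\gamma_*$ is an isometric isomorphism by the definition of $\widehat B_p(\gamma(\mathbb R))$. This is exactly the Calder\'on-type conclusion, obtained without any direct singular-integral estimate on $\Gamma$.

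The main obstacle is the second step: controlling the boundary behaviour of the Cauchy integral on a curve that is only Weil--Petersson, and justifying that the conformal change of variables keeps the kernel correction on the correct analytic side uniformly in the Besov topology. The entire strategy is designed to circumvent a direct estimate of the singular Cauchy operator and instead import its mapping properties from the half-plane model through the welding and the decomposition of Section \ref{9}.
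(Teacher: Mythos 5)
Your first step is correct and coincides with the paper's own setup: since $\gamma=F^\pm\circ h^\pm$ on $\mathbb R$, the push-forward $\gamma_*$ carries the decomposition $\widehat B_p(\mathbb R)=C_{h^-}(\widehat{\mathcal B}_p(\mathbb H^-))\oplus C_{h^+}(\widehat{\mathcal B}_p(\mathbb H^+))$ of \eqref{sum} onto $\widehat B_p(\gamma(\mathbb R))=\widehat{\mathcal B}_p(\Omega^+)\oplus\widehat{\mathcal B}_p(\Omega^-)$, so the transported operators $\gamma_*\circ P^\pm_{([\mu^+],[\mu^-])}\circ\gamma_*^{-1}$ are bounded projections onto $\widehat{\mathcal B}_p(\Omega^\pm)$; your reduction of the remaining identification to the two properties (a) $P^+_\Gamma=I$ on $\widehat{\mathcal B}_p(\Omega^+)$ and (b) $P^+_\Gamma=0$ on $\widehat{\mathcal B}_p(\Omega^-)$ is also logically sound. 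The genuine gap is in your justification of (a) and (b). These are not available as ``forms of Cauchy's integral formula and Cauchy's theorem'': $\Gamma$ is an unbounded curve through $\infty$, and a function $\psi\in\widehat B_p(\gamma(\mathbb R))$ is neither integrable on $\Gamma$ nor decaying at $\infty$ --- this is precisely why the paper must define $P^\pm_\Gamma$ through a regularized kernel --- so no contour can be closed and no integral over $\Gamma$ can be asserted to vanish without controlling the contribution near $\infty$ and the constants introduced by the regularization. Your proposed repair, pulling back by the Riemann maps and splitting off the Szeg\"o kernel, reproduces the same difficulty one level down: the correction kernel $K(w,t)=\frac{(F^+)'(t)}{F^+(w)-F^+(t)}-\frac{1}{w-t}$ is indeed holomorphic in $t\in\mathbb H^+$ for fixed $w\in\mathbb H^+$, but concluding that $\int_{\mathbb R}\phi(t)K(w,t)\,dt$ vanishes for boundary data $\phi\in\widehat{\mathcal B}_p(\mathbb H^+)$ is again a Cauchy-theorem-at-infinity claim, requiring decay estimates for $K$, an integrability or density argument for non-decaying $\phi$, and compatibility with the regularized kernel, none of which you supply. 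Since the transport of the decomposition is immediate from Section \ref{9}, this deferred analysis is the entire content of the theorem, not a routine detail.

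For comparison, the paper's proof of Theorem \ref{Cauchy} avoids contour deformation altogether. Both $\gamma_*\circ P^\pm_{([\mu^+],[\mu^-])}\circ\gamma_*^{-1}(\psi)$ and $P^\pm_\Gamma(\psi)$ form pairs of holomorphic functions on $\Omega^\pm$ whose boundary values sum to $\psi$ (for the former by the definition of the projections, for the latter by the Plemelj formula, Proposition \ref{Plemelj}); each pair assembles into a locally integrable function on $\mathbb C$ of growth $o(|z|)$ at $\infty$, the Cauchy-integral side by Semmes's estimates \cite{Se1}. The difference of the two assembled functions then has vanishing $\bar\partial$-derivative in the distribution sense, hence is entire, and the growth bound forces it to be constant. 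This Liouville-type argument is exactly what replaces your properties (a) and (b). If you want a route closer in spirit to yours, note the paper's remark after Theorem \ref{Cauchy}: it suffices to import the mapping property $P^\pm_\Gamma(\psi)\in\mathcal B_p(\Omega^\pm)$ from Liu--Shen \cite{LS2} and then let the directness of the sum force the identification; but that input is itself a nontrivial singular-integral theorem (stated there for $p>1$), not a consequence of classical contour integration.
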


A significant property of the Cauchy projections in the analysis on the curves is, however, its analytic dependence and the estimate of the operator norm as the curves vary (see \cite{CM0, Se}). In our method, it is clear that $\Vert P^\pm_\Gamma \Vert$ can be estimated in terms of $\Vert C_{h^\pm} \Vert$ for the composition operators $C_{h^\pm}$. In addition, since $P^\pm_\Gamma$ is related to the derivative of the biholomorphic homeomorphism $\Lambda$, its holomorphic dependence is also clear.

\begin{theorem}\label{theorem40}
For $p \geq 1$, the conjugates of the Cauchy projections ${P^{\pm}}_{([\mu^+],[\mu^-])}$ acting on $\widehat B_p(\mathbb R)$ depend holomorphically on $([\mu^+],[\mu^-]) \in T_p^+ \times T_p^-$ with respect to the operator norm topology.
\end{theorem}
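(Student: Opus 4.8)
The plan is to obtain Theorem \ref{theorem40} as a formal consequence of the biholomorphy of $\Lambda$ in Theorem \ref{theorem20}, by exhibiting each $P^\pm_{([\mu^+],[\mu^-])}$ as the conjugate of a \emph{fixed} projection under the derivative $d\Lambda$. Via the pre-Bers embeddings $\beta^\pm$ I would regard $T_p^+ \times T_p^-$ as an open subset of the fixed complex Banach space $E:=\widehat{\mathcal B}_p(\mathbb H^-) \oplus \widehat{\mathcal B}_p(\mathbb H^+)$, so that the tangent space at every $([\mu^+],[\mu^-])$ is canonically identified with $E$ and the two factor projections $\Pi^\pm:E \to \widehat{\mathcal B}_p(\mathbb H^\pm)$ are independent of the base point. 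By the property recorded before Theorem \ref{theorem20} --- namely that $d_{([\mu^+],[\mu^-])}\Lambda$ carries the factor $\widehat{\mathcal B}_p(\mathbb H^\pm)$ isomorphically onto $C_{h^\pm}(\widehat{\mathcal B}_p(\mathbb H^\pm))$ and conjugates $\Pi^\pm$ to $P^\pm_{([\mu^+],[\mu^-])}$ --- one arrives at the operator identity
$$
P^\pm_{([\mu^+],[\mu^-])}=\bigl(d_{([\mu^+],[\mu^-])}\Lambda\bigr)\circ \Pi^\pm \circ \bigl(d_{([\mu^+],[\mu^-])}\Lambda\bigr)^{-1}
$$
in the Banach algebra $\mathcal L(\widehat B_p(\mathbb R))$ of bounded operators on $\widehat B_p(\mathbb R)$.

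It then suffices to verify three holomorphy assertions and combine them. First, since $\Lambda$ is holomorphic between complex Banach spaces (Theorem \ref{theorem20}), the map $([\mu^+],[\mu^-]) \mapsto d_{([\mu^+],[\mu^-])}\Lambda$ is holomorphic from the open set $T_p^+ \times T_p^- \subset E$ into $\mathcal L(E,\widehat B_p(\mathbb R))$ for the operator norm; this is the standard fact that the derivative of a holomorphic Banach-space-valued map is again holomorphic, which may be seen by writing $d_x\Lambda$ through a Cauchy integral of $\Lambda$ over small circles. Second, because $\Lambda$ is a biholomorphism onto its image, each $d_{([\mu^+],[\mu^-])}\Lambda$ is a linear isomorphism $E \to \widehat B_p(\mathbb R)$; as operator inversion is holomorphic on the open set of invertible operators --- locally given by a Neumann series --- the map $([\mu^+],[\mu^-]) \mapsto \bigl(d_{([\mu^+],[\mu^-])}\Lambda\bigr)^{-1}$ is holomorphic into $\mathcal L(\widehat B_p(\mathbb R),E)$. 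Third, operator composition is bounded bilinear and hence holomorphic, so composing the two holomorphic factors above with the constant operators $\Pi^\pm$ shows that $([\mu^+],[\mu^-]) \mapsto P^\pm_{([\mu^+],[\mu^-])}$ is holomorphic in the operator norm topology, which is the assertion.

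The only point demanding care --- and the main obstacle --- is the first step in this infinite-dimensional setting: one must ensure that $d\Lambda$ exists as a genuinely complex-linear, norm-bounded operator depending holomorphically on the base point, rather than merely a Gâteaux derivative along individual directions. This is guaranteed by the Fréchet-holomorphy of $\Lambda$ supplied by Theorem \ref{theorem20}, together with the general principle that a locally bounded Gâteaux-holomorphic map between complex Banach spaces is Fréchet holomorphic with holomorphic derivative. Once this is settled, the invertibility of $d\Lambda$ (again from Theorem \ref{theorem20}) and the holomorphy of inversion and of composition in the operator algebra reduce the remaining work to routine manipulations, and the estimate of $\Vert P^\pm_{([\mu^+],[\mu^-])}\Vert$ in terms of $\Vert C_{h^\pm}\Vert$ mentioned in the text is recovered along the same lines.
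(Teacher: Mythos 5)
Your proposal is correct and takes essentially the same route as the paper's own proof (Theorem \ref{conjugateH}): there, too, the standardized Cauchy projections are written as $P^\pm_{([\mu^+],[\mu^-])}=d_{([\mu^+],[\mu^-])}\Lambda\circ J^\pm\circ (d_{([\mu^+],[\mu^-])}\Lambda)^{-1}$ for the fixed, base-point-independent tangent-space projections $J^\pm$ (justified by Proposition \ref{image}), and holomorphic dependence in operator norm is then deduced from the biholomorphy of $\Lambda$. The only difference is that you make explicit the standard infinite-dimensional facts the paper leaves implicit --- holomorphy of the derivative map $x\mapsto d_x\Lambda$, holomorphy of operator inversion via Neumann series, and boundedness of composition --- which is a harmless elaboration, not a different argument.
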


An equivalent statement for the conjugate of the Cauchy transform ${\mathcal H}_\Gamma=-i(P_\Gamma^+-P_\Gamma^-)$ is given in Theorem \ref{conjugateH} of Section \ref{11}. We also exhibit an important application of this result after Coifman and Meyer \cite{CM}.

\section{Integrable Teich\-m\"ul\-ler spaces}\label{2}

Within the universal Teich\-m\"ul\-ler space, integrable Teich\-m\"ul\-ler spaces are defined by the integrability of Beltrami coefficients. One of the fundamental problems in integrable Teich\-m\"ul\-ler spaces is characterizing these spaces through quasisymmetric homeomorphisms, which are the boundary mappings of quasiconformal homeomorphisms. In this section, to introduce this problem, we lay the foundations of integrable Teich\-m\"ul\-ler spaces.

An orientation-preserving homeomorphism $h:\mathbb R \to \mathbb R$ is called a 
{\it quasisymmetric} homeo\-morphism 
if it satisfies the following doubling condition: for any bounded interval $I \subset \mathbb R$, 
there exists a constant $M>1$ (called a doubling constant) such that $|h(I)| \leq M|h(\frac{1}{2}I)|$, 
where $|\cdot|$ denotes the Lebesgue measure 
on $\mathbb R$ and $\frac{1}{2}I$ represents the interval with the same center as $I$ but half the length. 
An orientation-preserving homeomorphism $H$ defined on a planar region is 
called a {\it quasiconformal} homeomorphism if $H$ has locally integrable partial derivatives $H_z, H_{\bar z}$
in the distribution sense, 
and the {\it complex dilatation} $\mu(z) = H_{\bar z}/H_z$ satisfies $\Vert \mu \Vert_\infty < 1$.

Any quasiconformal homeomorphism $H:\mathbb H \to \mathbb H$ ($\mathbb H$ is either 
the upper half-plane $\mathbb H^+$ or the lower half-plane $\mathbb H^-$)
can be continuously extended to a quasisymmetric homeomorphism $h:\mathbb R \to \mathbb R$. 
Conversely, any quasisymmetric homeomorphism $h$ is a continuous extension of such a quasiconformal homeomorphism $H$.

A complex-valued measurable function $\mu$ on $\mathbb H$ that satisfies $\Vert \mu \Vert_\infty < 1$ 
is called a {\it Belt\-rami coefficient}. The space of all Beltrami coefficients on $\mathbb H$ is denoted by $M(\mathbb H)$.
For a Belt\-rami coefficient $\mu \in M(\mathbb H)$, there uniquely exists a quasiconformal self-homeo\-morphism 
$H(\mu)$ of $\mathbb H$ with the complex dilatation $\mu$ that satisfies the normalization condition (which means
fixing $0,1,\infty$ in all cases also for other mappings appearing in this paper)
by the measurable Riemann mapping theorem. Therefore, the entirety of such quasiconformal homeomorphisms $H(\mu)$ can be 
identified with $M(\mathbb H)$.

The set of all quasisymmetric homeomorphisms $h:\mathbb R \to \mathbb R$ that 
satisfy the normalization conditions is called the {\it universal Teich\-m\"ul\-ler space}, which is denoted by $T$. 
The correspondence through boundary extension from quasiconformal homeomorphisms $H(\mu)$ of 
$\mathbb H$ to quasisymmetric homeomorphisms $h(\mu)$ of $\mathbb R$ defines the {\it Teich\-m\"ul\-ler projection}
$\pi:M(\mathbb H) \to T$.
The equivalence class $[\mu]$ of Beltrami coefficients $\mu$ relative to $\pi$ is called a Teich\-m\"ul\-ler equivalence class.

The topology of $T$ is induced from $M(\mathbb H)$. 
More precisely, it is the quotient topology determined by $\pi$.
In another way, it can be given as the underlying topology of the Teich\-m\"ul\-ler distance 
defined by the $L_\infty$-norm on $M(\mathbb H)$.
It is well-known that they are equivalent.

The complex structure and the metric structure of $T$
are induced through the embedding of $T$ into a certain complex Banach space.
For $\mu \in M(\mathbb H^+)$, let $F^\mu$ be a quasiconformal self-homeomorphism of $\mathbb C$ satisfying the normalization condition, having the complex dilatation $\mu$ in the upper half-plane $\mathbb H^+$, 
and being conformal in the lower half-plane $\mathbb H^-$. 
For Beltrami coefficients $\mu$ and $\nu$,
we see that $\pi(\mu)=\pi(\nu)$ if and only if $F^\mu|_{\mathbb H^-}=F^\nu|_{\mathbb H^-}$. 
By taking the Schwarzian derivative 
$S_F=(\log F')''-\frac{1}{2}((\log F')')^2$ 
of $F=F^\mu|_{\mathbb H^-}$, $T$ can be topologically embedded into the complex Banach space of ${\mathcal A}_\infty(\mathbb H)$ of holomorphic functions $\Psi$ on $\mathbb H=\mathbb H^-$
with the hyperbolic $L_\infty$-norm 
$\Vert \Psi \Vert_{\mathcal A_\infty}=\sup_{z \in \mathbb H}\,|({\rm Im}\,z)^2\Psi(z)|$.
This mapping $\alpha:T \to {\mathcal A}_\infty(\mathbb H)$ well defined by
$[\mu] \mapsto S_F$ is called the {\it Bers embedding}. The image $\alpha(T)$ is a bounded domain in ${\mathcal A}_\infty(\mathbb H)$,
which is identified with $T$. 

Moreover, $T$ has the structure of a group with the composition of normalized quasisymmetric homeomorphisms as its operation. 
For $h(\mu)=[\mu]$ and $h(\nu)=[\nu]$,
the composition $h(\mu) \circ h(\nu)$ is denoted by $[\mu]\ast [\nu]$ and
the inverse $h(\mu)^{-1}$ is denoted by $[\mu]^{-1}$.
In fact, 
the composition of normalized quasiconformal self-homeomorphisms $H(\mu)$ and $H(\nu)$ of $\mathbb H$ gives the group structure 
$\mu \ast \nu$ on $M(\mathbb H)$ and the Teich\-m\"ul\-ler projection $\pi$ is a homomorphism as $\pi(\mu \ast \nu)=[\mu] \ast [\nu]$.
The right translation $r_{[\nu]}:T \to T$ for any $[\nu] \in T$ is defined as $[\mu] \mapsto [\mu] \ast [\nu]$, providing a biholomorphic automorphism of $T$. However, $T$ is not a topological group.

For literature on quasiconformal mappings and the universal Teich\-m\"ul\-ler space, useful references include \cite{Ah, Le}.
Proofs of the aforementioned facts can be found therein.

We introduce the integrable Teich\-m\"ul\-ler space $T_p$ in $T$.

\begin{definition}
For $p \geq 1$, a Beltrami coefficient $\mu \in M(\mathbb H)$ is $p$-integrable 
with respect to the hyperbolic metric if it satisfies
$$
\Vert \mu \Vert_p=\left(\int_{\mathbb H} |\mu(z)|^p \frac{dxdy}{|{\rm Im}\,z|^2}\right)^{1/p}<\infty.
$$
The set of all such $p$-integrable Beltrami coefficients is denoted by $M_p(\mathbb H)$.
The {\it $p$-integrable Teich\-m\"ul\-ler space} (also referred to as Weil--Petersson Teich\-m\"ul\-ler space) is defined as $T_p = \pi(M_p(\mathbb H))$ for the Teich\-m\"ul\-ler projection $\pi$. 
\end{definition}

It follows immediately from this definition that $T_p \subset T_q$ if $p<q$.

Since the universal Teich\-m\"ul\-ler space $T$ is defined as the set of
all normalized quasisymmetric homeomorphisms, the subset $T_p \subset T$ is also regarded as
a certain subset of normalized quasisymmetric homeomorphisms.
We consider this characterization later.

The topology of the integrable Teich\-m\"ul\-ler space $T_p$ is induced from $M_p(\mathbb H)$
with norm $\Vert \mu \Vert_p+\Vert \mu \Vert_\infty$, which is 
the quotient topology by $\pi$.
This can be also given as the underlying topology of the Weil--Petersson distance 
defined by the $L_p$-norm on $M_p(\mathbb H)$, which are known to be equivalent (see \cite[Proposition 5.2]{WM-*}).

The complex structure and the metric structure of $T_p$ for $p \geq 1$ are also defined 
through the Bers embedding $\alpha:T \to A_\infty(\mathbb H)$. The image of $T_p$ under $\alpha$ is contained in
the complex Banach space of holomorphic functions
$$
\mathcal A_p(\mathbb H)=\{\Psi \in \mathcal A_\infty(\mathbb H) \mid \Vert \Psi \Vert_{\mathcal A_p}=
\left(\int_{\mathbb H} |({\rm Im}\,z)^2\Psi(z)|^p \frac{dxdy}{|{\rm Im}\,z|^2}\right)^{1/p}<\infty\}.
$$
We note that the inclusion $\mathcal A_p(\mathbb H) \hookrightarrow \mathcal A_\infty(\mathbb H)$ is continuous, and 
$\mathcal A_p(\mathbb H) \hookrightarrow \mathcal A_q(\mathbb H)$ is continuous if $p<q$.
Moreover, $\alpha$ gives a topological embedding of $T_p$ respecting these stronger topologies
whose image is a connected open subset of $\mathcal A_p(\mathbb H)$.
In this way, the complex Banach structure is provided for $T_p$.
These facts for all $p \geq 1$ are shown in \cite{WM-1,WM-*}. Concerning the Weil--Petersson metric defined through
the Bers embedding, one can refer to \cite[Section 5]{WM-*}.

The {\it BMO Teich\-m\"ul\-ler space} $T_B$ mentioned in the introduction (see \cite{AZ, SWei})
is identified with the set of all normalized strongly quasisymmetric homeomorphisms.
Here, an orientation-preserving homeomorphism $h:\mathbb R \to \mathbb R$ is 
called {\it strongly quasisymmetric} if there exist positive constants $C, \delta>0$ such that
for any bounded interval $I \subset \mathbb R$ 
and any measurable subset $E \subset I$, it holds that
$|h(E)|/|h(I)| \leq C (|E|/|I|)^\delta$. 
This condition is equivalent to saying that $h$ is locally absolutely continuous and $h'$ is an $A_\infty$-weight (see \cite{CF}). 

The BMO Teich\-m\"ul\-ler space $T_B$ is also represented by Beltrami coefficients. Namely, we define $\mu \in M(\mathbb H)$ to be in
a subset $M_B(\mathbb H)$ if it satisfies the {\it Carleson measure condition}
$$
\Vert \mu \Vert_{\ast}^2=\sup_{I \subset \mathbb R}\frac{1}{|I|}\int_{\widehat I}|\mu(z)|^2\frac{dxdy}{|{\rm Im}\,z|}<\infty,
$$
where the supremum is taken over all bounded intervals $I$ on $\mathbb{R}$ and $\widehat I$ is the square in $\mathbb H$
over $I$. Then, $T_B=\pi(M_B(\mathbb H))$. 
It is known that $T_p$ is contained in $T_B$ for every $p \geq 1$ (see \cite[Proposition 2.2]{WM-3}).
Hence, any quasisymmetric homeomorphism $h \in T_p$ is strongly quasisymmetric, and in particular, it is 
locally absolutely continuous. We consider the problem of determining the function space on $\mathbb R$ to which $\log h'$ belongs.

\section{Besov Spaces}\label{3}

We set up the function space on $\mathbb R$ to which $\log h'$ belongs for a quasisymmetric homeo\-morphism $h \in T_p$.
For $m \in \mathbb{N}$ and $t \in \mathbb{R}$, the $m$-th order difference of a function $\phi$ on $\mathbb{R}$ is defined as
$$
\Delta_t^1 \phi(x) = \phi(x+t) - \phi(x); \quad
\Delta_t^{m+1} \phi(x) = \Delta_t^m \phi(x+t) - \Delta_t^m \phi(x).
$$
For $s \in \mathbb{R}$ and $0 < p,\, q \leq \infty$, a semi-norm of a locally integrable complex-valued function $\phi$ on $\mathbb{R}$ is given by
$$
\Vert \phi \Vert_{\dot B^s_{p,q}} = \left(\int_{-\infty}^{\infty}
|t|^{-sq} \Vert \Delta_t^{\lfloor s \rfloor+1} \phi \Vert_{L_p}^q \frac{dt}{|t|}
\right)^{1/q}.
$$
The set of those $\phi$ with $\Vert \phi \Vert_{\dot B^s_{p,q}} < \infty$ is referred to as the {\it homogeneous Besov space}.

This is the classical definition of Besov spaces, but in relation to integrable Teich\-m\"ul\-ler spaces, only the 
critical case $p=q=1/s$ is considered. Therefore, we provide the following specific definition.

\begin{definition}
For $p > 1$, a locally integrable complex-valued function $\phi$ on $\mathbb{R}$ belongs to the {\it Besov space} $B_p(\mathbb{R})$ if
$$
\Vert \phi \Vert_{B_p} = \left(\int_{-\infty}^\infty \int_{-\infty}^\infty \frac{|\phi(x+t)-\phi(x)|^p}{t^2} \, dx \, dt \right)^{1/p}
$$
is finite. Moreover, for $p \geq 1$,
$$
\Vert \phi \Vert_{B_p^{\#}} = \left(\int_{-\infty}^\infty \int_{-\infty}^\infty 
\frac{|\phi(x+2t)-2\phi(x+t)+\phi(x)|^p}{t^2} \, dx \, dt \right)^{1/p}
$$
defines the set $B_p^{\#}(\mathbb R)$ consisting of those $\phi$ for which this quantity is finite.
\end{definition}

\begin{remark}
For $p = 1$, functions $\phi$ such that $\Vert \phi \Vert_{B_p} < \infty$ are restricted to constant functions.
See \cite{Br} as noted in \cite[Exercise 17.14]{Leo}.
\end{remark}

We see that $B_p(\mathbb{R}) \hookrightarrow B_q(\mathbb{R})$ and $B_p^{\#}(\mathbb{R}) \hookrightarrow B_q^{\#}(\mathbb{R})$
are continuous for $p<q$ by the Sobolev embedding theorem (see \cite[Theorem 2.14]{Sa}).

A locally integrable complex-valued function $\phi$ on $\mathbb{R}$ is said to be {\it BMO} if
$$
\Vert \phi \Vert_{\rm BMO} = \sup_{I \subset \mathbb R} \frac{1}{|I|} \int_I |\phi(x) - \phi_I| \, dx < \infty,
$$
where the supremum is taken over all bounded intervals $I$ on $\mathbb{R}$, and $\phi_I$ denotes the integral mean of $\phi$ over $I$. The set of all BMO functions is denoted by ${\rm BMO}(\mathbb R)$. It is easy to see that 
$\Vert \phi \Vert_{\rm BMO}
\leq \Vert \phi \Vert_{L_\infty}$. It is known that $\log h'$ for
a strongly quasisymmetric homeomorphism $h$ belongs to ${\rm BMO}(\mathbb R)$ since $h'$ is an $A_\infty$-weight.

\begin{definition}
For $p \geq 1$, we define the function space $\widehat B_p(\mathbb R)$ as
$$
\widehat B_p(\mathbb R) = B_p^{\#}(\mathbb R) \cap {\rm BMO}(\mathbb R),
$$
with the semi-norm $\Vert \phi \Vert_{{\widehat B}_p} = \Vert \phi \Vert_{B_p^{\#}} + \Vert \phi \Vert_{\rm BMO}$.
\end{definition}

For $p > 1$, the semi-norms $\Vert \phi \Vert_{B_p}$ and $\Vert \phi \Vert_{{\widehat B}_p}$ are equivalent. 
Indeed, $\Vert \phi \Vert_{B_p} \asymp \Vert \phi \Vert_{B_p^{\#}}$ is well-known (see \cite[Proposition 17.17]{Leo})
and $\Vert \phi \Vert_{\rm BMO} \leq \Vert \phi \Vert_{B_p}$ can be found in \cite[Proposition 2.2]{WM-3}.

The reason for adding the BMO norm is twofold: firstly, to restrict indeterminacy to only a constant function difference, and secondly, to ensure that when defining Besov norms for functions on the unit circle $\mathbb{S}$ in the same manner as below, they correspond isomorphically through the Cayley transformation.

By ignoring differences in constant functions, $\Vert \cdot \Vert_{B_p}$ and 
$\Vert \cdot \Vert_{{\widehat B}_p}$ can be treated as norms, allowing $B_p(\mathbb R)$ and 
$\widehat B_p(\mathbb R)$ to be regarded as complex Banach spaces.

On the unit circle $\mathbb S$, we define the spaces $B_p(\mathbb S)$, $B_p^{\#}(\mathbb S)$, 
and $\widehat{B}_p(\mathbb S)$ of integrable functions in the same way. 
For the Cayley transformation $K(x)=(x-i)/(x+i)$ which maps
$\widehat{\mathbb R}= \mathbb R \cup \{\infty\}$ onto $\mathbb S$ with $K(\infty)=1$,
let $K_*(\phi)=\phi_*$ be the push-forward of a function $\phi$ on $\mathbb R$ to that on $\mathbb S$;
$K_*(\phi)=\phi \circ K^{-1}=\phi_*$. Then, $K_*$ defines the isometric isomorphism from
$B_p(\mathbb R)$ onto $B_p(\mathbb S)$ for $p>1$ (see \cite[p.8]{WM-4}).
We also note that 
$\Vert K_*(\phi) \Vert_{\rm BMO} \asymp \Vert \phi \Vert_{\rm BMO}$ (see \cite[Corollary VI.1.3]{Ga}).
Moreover, we see that $K_*$ gives a Banach isomorphism from $\widehat{B}_p(\mathbb R)$ onto 
$\widehat{B}_p(\mathbb S)$ for $p > 1$.
Indeed,
$$
\Vert \phi \Vert_{\widehat B_p} \asymp \Vert \phi \Vert_{B_p}=\Vert K_*(\phi) \Vert_{B_p}
\asymp \Vert K_*(\phi) \Vert_{\widehat B_p}.
$$
In the case $p=1$, we can obtain the same result, 
but the proof is more complicated and carried out later at the end of Section \ref{4}.

\begin{proposition}\label{isomorphism2}
$K_*:\widehat{B}_p(\mathbb R) \to \widehat{B}_p(\mathbb S)$ is a Banach isomorphism for $p \geq 1$. 
\end{proposition}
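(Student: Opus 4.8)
The plan is to reduce everything to the case $p=1$, since for $p>1$ the claimed isomorphism is exactly the displayed chain of equivalences above, which rests on the isometry of $K_*$ on $B_p(\mathbb R)$ and on $\Vert\cdot\Vert_{B_p}\asymp\Vert\cdot\Vert_{\widehat B_p}$. The Remark shows that for $p=1$ the space $B_1(\mathbb R)$ collapses to the constants, so this route is unavailable: we must compare the seminorms $\Vert\cdot\Vert_{B_1^{\#}}$ built from \emph{second} differences, and do so directly through the Cayley map $K$. Since $\Vert K_*(\phi)\Vert_{\rm BMO}\asymp\Vert\phi\Vert_{\rm BMO}$ is already available from \cite[Corollary VI.1.3]{Ga}, and bijectivity of $K_*$ is clear, the proposition for $p=1$ reduces to the two-sided bound $\Vert K_*\phi\Vert_{B_1^{\#}(\mathbb S)}\lesssim\Vert\phi\Vert_{\widehat B_1(\mathbb R)}$ and its reverse, where, crucially, the full $\widehat B_1$-norm (not the second-difference seminorm alone) must be allowed to enter.

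First I would replace the second-difference seminorm by an area integral of analytic extensions. Using the analytic Besov spaces $\widehat{\mathcal B}_1(\mathbb H^\pm)$ developed in Section \ref{4}, together with the decomposition $\phi=\phi^++\phi^-$ into Cauchy/Szeg\"o parts with $\phi^\pm$ the boundary values of $\Phi^\pm\in\widehat{\mathcal B}_1(\mathbb H^\pm)$ (this splitting being bounded on $\widehat B_1$ because the Hilbert transform is bounded on both $B_1^{\#}$ and ${\rm BMO}$), one has the critical-exponent characterization $\Vert\phi\Vert_{B_1^{\#}}\asymp\sum_{\pm}\int_{\mathbb H^\pm}|(\Phi^\pm)''(z)|\,dx\,dy$ of $\dot B^1_{1,1}$. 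The same characterization holds on $\mathbb S$ with $\Phi^\pm$ replaced by the analytic functions on the two components of $\mathbb C\setminus\mathbb S$. This turns Cayley invariance on $\mathbb R$ into the conformal behaviour of these area integrals under the M\"obius map $K$, which sends $\mathbb H^+$ to the unit disk and $\mathbb H^-$ to its exterior.

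Next comes the explicit conformal computation. Writing $\Psi=\Phi\circ K^{-1}$ for a single analytic component and using that $K$ is M\"obius, a change of variables gives the exact identity
\[
\int_{\mathbb H^+}|\Phi''(z)|\,dx\,dy=\int_{|w|<1}\Bigl|\Psi''(w)-\tfrac{2}{1-w}\,\Psi'(w)\Bigr|\,du\,dv,
\]
since $K''/(K')^2=i(z+i)=-2/(1-w)$. The leading term $\int_{|w|<1}|\Psi''|\,du\,dv$ is precisely the corresponding seminorm on the disk, so by the triangle inequality both comparisons reduce to controlling the discrepancy, which transforms back to
\[
\int_{|w|<1}\frac{|\Psi'(w)|}{|1-w|}\,du\,dv=\int_{\mathbb H^+}\frac{|\Phi'(z)|}{|z+i|}\,dx\,dy.
\]

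The hard part will be this error term. A naive Bloch/${\rm BMO}$ bound $|\Phi'(z)|\lesssim\Vert\phi\Vert_{\rm BMO}/{\rm Im}\,z$ is not enough by itself, and neither is the second-difference datum alone; one must combine them. The normalization forces $\Phi'\to 0$ at $\infty$, so $\Phi'$ is recovered by integrating $\Phi''$ out to infinity, and the goal is a weighted Hardy-type inequality estimating $\int_{\mathbb H^+}|\Phi'(z)|/|z+i|\,dx\,dy$ by $\int_{\mathbb H^+}|\Phi''|\,dx\,dy$, with the borderline contribution near $w=1=K(\infty)$ absorbed by $\Vert\phi\Vert_{\rm BMO}$. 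The model case $\Phi'(z)=(z+i)^{-a}$, for which the two integrals are literally proportional and converge under the same condition $a>1$, indicates that the weights are correctly matched and that such an inequality should hold. Because $K^{-1}$ is again M\"obius, the reverse estimate follows from the identical computation with the roles of $\mathbb R$ and $\mathbb S$ interchanged; combining the two directions with the ${\rm BMO}$ equivalence yields $\Vert K_*\phi\Vert_{\widehat B_1(\mathbb S)}\asymp\Vert\phi\Vert_{\widehat B_1(\mathbb R)}$, and hence the Banach isomorphism.
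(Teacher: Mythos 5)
Your overall route is in fact the same as the paper's: the paper also reduces Proposition \ref{isomorphism2} to the analytic halves via the Riesz/Szeg\"o decomposition of Proposition \ref{decomposition}, and then needs exactly the statement that $K_*$ is an isomorphism between $\widehat{\mathcal B}_p(\mathbb H^\pm)$ and the corresponding analytic Besov spaces over $\mathbb S$. Your M\"obius algebra is correct: $K''/(K')^2=i(z+i)=-2/(1-w)$, both change-of-variable identities check out, and the splitting $\phi=\phi^++\phi^-$ is legitimately bounded on $\widehat B_1$ by Propositions \ref{Hilbert} and \ref{decomposition}. The difference is that at the decisive step the paper cites \cite[Theorem 2.4]{WM-*}, whereas you attempt to prove that step directly --- and there the proposal has a genuine gap.

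The gap is that the weighted inequality
$$
\int_{|w|<1}\frac{|\Psi'(w)|}{|1-w|}\,du\,dv \;\lesssim\; \int_{|w|<1}|\Psi''(w)|\,du\,dv \,+\, \Vert\Psi\Vert_{\rm BMOA}
$$
(and its half-plane counterpart with weight $|z+i|^{-1}$) is asserted, not proved: your only evidence is the model $\Phi'(z)=(z+i)^{-a}$, for which the two sides are literally proportional and hence carry no information about how the estimate survives when the mass of $\Phi''$ is spread over many scales. This inequality is not a routine weighted Hardy inequality; via your own identity it is \emph{equivalent} to the relevant direction of \cite[Theorem 2.4]{WM-*}, i.e.\ it is the theorem, and every naive route to it fails exactly at the borderline. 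Concretely: (i) writing $\Psi'(w)=\Psi'(0)+\int_0^1\Psi''(tw)\,w\,dt$ and applying Fubini against the weight $|1-w|^{-1}$ produces the kernel $\int_{|\zeta|}^1 t^{-2}|t-\zeta|^{-1}dt$, which blows up logarithmically as $\zeta$ approaches the radius $[0,1]$, so one only gets $\int|\Psi''|\log(\cdot)$ rather than $\int|\Psi''|$; (ii) the BMOA bound $|\Psi'(w)|\lesssim\Vert\Psi\Vert_{\rm BMOA}/(1-|w|)$ leads to $\int_{|w|<1}\frac{du\,dv}{(1-|w|)\,|1-w|}=\infty$; (iii) even adding the Hardy-space information $\Vert\Phi'\Vert_{H_1}$ (available for $\widehat{\mathcal B}_1$ by \cite[Proposition 2.5]{WM-*}), the bound $\int_{\mathbb R}|\Phi'(x+iy)|\,|x+i(y+1)|^{-1}dx\le\Vert\Phi'\Vert_{H_1}/(y+1)$ leaves the divergent integral $\int_1^\infty dy/(y+1)$. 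So any correct proof must exploit holomorphy and cancellation more seriously than a triangle inequality calibrated on one power function. Finally, your closing remark that ``the reverse estimate follows from the identical computation'' conflates two genuinely different inequalities (weight $|1-w|^{-1}$ against $\int_{\mathbb D}|\Psi''|$ versus weight $|z+i|^{-1}$ against $\int_{\mathbb H^+}|\Phi''|$); neither is established. In short, you have correctly re-derived the reduction, but the analytic core --- the Cayley invariance of $\widehat{\mathcal B}_1$ that the paper outsources to \cite{WM-*} --- remains unproved in your argument.
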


The homogeneous Sobolev space ${\dot W}^1_1(\mathbb R)$ consists of locally integrable complex valued functions 
$\phi$ on $\mathbb R$ such that the distributional derivative $\phi'$ is integrable on $\mathbb R$.
The semi-norm is given by $\Vert \phi \Vert_{{\dot W}^1_1}=\Vert \phi' \Vert_{L_1}$.
We have $\Vert K_*(\phi) \Vert_{{\dot W}^1_1}= \Vert \phi \Vert_{{\dot W}^1_1}$ by the change of variables.
The following result is used later.

\begin{lemma}\label{W1}
The semi-norm $\Vert \phi \Vert_{\widehat B_1}=\Vert \phi \Vert_{B_1^{\#}}+\Vert \phi \Vert_{\rm BMO}$ on 
$\widehat B_1(\mathbb R)$ is equivalent to $\Vert \phi \Vert_{B_1^{\#}}+\Vert \phi \Vert_{{\dot W}^1_1}$.
In particular, 
$\widehat B_1(\mathbb R)=B_1^{\#}(\mathbb R) \cap {\dot W}^1_1(\mathbb R)$.
The same norm estimate holds true for $\widehat B_1(\mathbb S)$, but $\widehat B_1(\mathbb S)=B_1^{\#}(\mathbb S)$
in the unit circle case.
\end{lemma}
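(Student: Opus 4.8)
The plan is to prove the two semi-norms are equivalent by comparing each of $\Vert\phi\Vert_{\rm BMO}$ and $\Vert\phi\Vert_{{\dot W}^1_1}$ against the common term $\Vert\phi\Vert_{B_1^{\#}}$. The starting observation is that, by the definitions in Section \ref{3}, the second difference satisfies $\Delta_t^2\phi(x)=\phi(x+2t)-2\phi(x+t)+\phi(x)$, so $\Vert\phi\Vert_{B_1^{\#}}$ is exactly the critical homogeneous Besov semi-norm $\Vert\phi\Vert_{\dot B^1_{1,1}}$. Since this semi-norm is annihilated precisely by affine functions, whereas $\Vert\cdot\Vert_{\rm BMO}$ and $\Vert\cdot\Vert_{{\dot W}^1_1}$ are annihilated only by constants, the whole point is that intersecting $B_1^{\#}(\mathbb R)$ with either ${\rm BMO}(\mathbb R)$ or ${\dot W}^1_1(\mathbb R)$ removes the same (linear) degree of freedom.

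For the easy inclusion, suppose $\phi$ is locally absolutely continuous with $\phi'\in L_1$. For any bounded interval $I$ and $x\in I$ one has $|\phi(x)-\phi_I|\le\frac{1}{|I|}\int_I|\phi(x)-\phi(y)|\,dy\le\int_I|\phi'|$, and averaging over $x\in I$ gives $\Vert\phi\Vert_{\rm BMO}\le\Vert\phi'\Vert_{L_1}=\Vert\phi\Vert_{{\dot W}^1_1}$. Hence $B_1^{\#}(\mathbb R)\cap{\dot W}^1_1(\mathbb R)\subseteq\widehat B_1(\mathbb R)$ and $\Vert\phi\Vert_{\widehat B_1}\le\Vert\phi\Vert_{B_1^{\#}}+\Vert\phi\Vert_{{\dot W}^1_1}$.

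The substantial direction is $\widehat B_1(\mathbb R)\subseteq{\dot W}^1_1(\mathbb R)$ together with $\Vert\phi\Vert_{{\dot W}^1_1}\lesssim\Vert\phi\Vert_{B_1^{\#}}$. Take $\phi\in\widehat B_1$; being in ${\rm BMO}$ it grows at most logarithmically and is a tempered distribution, so it admits a Littlewood--Paley decomposition $\phi=\sum_j\Delta_j\phi$ (modulo polynomials) with $\sum_j 2^j\Vert\Delta_j\phi\Vert_{L_1}\asymp\Vert\phi\Vert_{\dot B^1_{1,1}}=\Vert\phi\Vert_{B_1^{\#}}$. By Bernstein's inequality $\Vert(\Delta_j\phi)'\Vert_{L_1}\lesssim 2^j\Vert\Delta_j\phi\Vert_{L_1}$, so $g:=\sum_j(\Delta_j\phi)'$ converges absolutely in $L_1$ with $\Vert g\Vert_{L_1}\lesssim\Vert\phi\Vert_{B_1^{\#}}$ (this is the embedding $\dot B^0_{1,1}(\mathbb R)\hookrightarrow L_1(\mathbb R)$ applied to $\phi'$). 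Because $\phi$ is determined by its second differences only up to an affine function, its distributional derivative satisfies $\phi'=g+c$ for some constant $c$. To see $c=0$, note $G(x)=\int_0^x g$ is bounded since $g\in L_1$, so $\phi(x)=G(x)+cx+\text{const}$ almost everywhere; as $G$ and constants lie in ${\rm BMO}$ and $\phi\in{\rm BMO}$, the function $cx$ must lie in ${\rm BMO}$, which forces $c=0$. Thus $\phi'=g\in L_1$ and $\Vert\phi\Vert_{{\dot W}^1_1}\lesssim\Vert\phi\Vert_{B_1^{\#}}$, which together with the previous paragraph yields both the semi-norm equivalence and the identity $\widehat B_1(\mathbb R)=B_1^{\#}(\mathbb R)\cap{\dot W}^1_1(\mathbb R)$.

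I expect the main obstacle to be the realization issue at the borderline exponent $s=1=1/p$: one must justify that a locally integrable $\phi$ with $\Vert\phi\Vert_{B_1^{\#}}<\infty$ has its distributional derivative equal to the $L_1$-function $g$ up to \emph{exactly one} constant (no higher-degree polynomial), so that the single ${\rm BMO}$ condition suffices to pin it down; this is precisely where the affine invariance of the second-difference semi-norm meets the ${\rm BMO}$ hypothesis. The circle case is then handled separately and is simpler: on $\mathbb S$ there are no non-constant affine functions, so the same Littlewood--Paley and Bernstein estimates give $B_1^{\#}(\mathbb S)\hookrightarrow{\dot W}^1_1(\mathbb S)$ with no surviving constant ambiguity, and the averaging bound of the easy direction gives ${\dot W}^1_1(\mathbb S)\hookrightarrow{\rm BMO}(\mathbb S)$. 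Consequently ${\rm BMO}$ membership is automatic, so $\widehat B_1(\mathbb S)=B_1^{\#}(\mathbb S)$, with all the relevant semi-norms comparable to $\Vert\cdot\Vert_{B_1^{\#}}$.
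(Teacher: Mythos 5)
Your proof is correct, but it takes a genuinely different route from the paper's. The paper argues on the circle first: there the critical embedding $B_1^{\#}(\mathbb S)\subset\dot W^1_1(\mathbb S)$ is obtained by citing \cite[Theorem 17.66]{Leo}, with the $L_1$ term required by that theorem controlled by $2\pi\Vert\phi_*\Vert_{\rm BMO}$ thanks to compactness of $\mathbb S$; the converse estimate uses the one-dimensional fact $\Vert\phi_*\Vert_{\rm BMO}\le\inf_{\alpha}\Vert\phi_*-\alpha\Vert_{L_\infty}\le\Vert\phi_*\Vert_{\dot W^1_1}$; and the real-line case is then deduced by transfer through the Cayley isomorphism $K_*$ of Proposition \ref{isomorphism2} together with the invariance $\Vert K_*(\phi)\Vert_{\dot W^1_1}=\Vert\phi\Vert_{\dot W^1_1}$. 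You instead work directly on $\mathbb R$: you prove the critical embedding yourself via Littlewood--Paley decomposition and Bernstein's inequality, obtaining $\phi'=g+(\text{polynomial correction})$ with $\Vert g\Vert_{L_1}\lesssim\Vert\phi\Vert_{B_1^{\#}}$, and you use the BMO hypothesis to annihilate the correction (since $G(x)=\int_0^x g$ is bounded, any surviving nonconstant polynomial would have to lie in ${\rm BMO}(\mathbb R)$, which is impossible -- and note this argument kills corrections of any degree, so your affine-ambiguity claim need not even be invoked in sharp form). What your approach buys: it is self-contained on the line, it does not need Proposition \ref{isomorphism2} (whose $p=1$ case the paper only establishes at the end of Section \ref{4} via the Szeg\"o-projection decomposition, so your route simplifies the logical ordering), and it isolates exactly what the BMO hypothesis does on $\mathbb R$, namely remove the affine ambiguity inherent in second differences -- an ambiguity absent on $\mathbb S$, which is precisely why $\widehat B_1(\mathbb S)=B_1^{\#}(\mathbb S)$ there while the intersection with $\dot W^1_1$ is genuinely needed on the line; your hard direction even yields the slightly sharper bound $\Vert\phi\Vert_{\dot W^1_1}\lesssim\Vert\phi\Vert_{B_1^{\#}}$, with BMO used only qualitatively. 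What the paper's approach buys: it avoids Fourier-analytic machinery entirely by outsourcing the critical embedding to a textbook theorem, and compactness of $\mathbb S$ makes the realization/polynomial-correction issue you flag never arise. One caution for a final write-up: the comparison of the Littlewood--Paley norm with the second-difference norm at the borderline index $s=1=1/p$, and the fact that the LP sum recovers $\phi$ in $\mathcal S'$ modulo a polynomial, are standard but should be given precise references (e.g., Triebel or Grafakos), since this is exactly the analytic content the paper delegates to \cite[Theorem 17.66]{Leo}.
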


\begin{proof}
We first prove the statement for $\widehat B_1(\mathbb S)$, and then transfer it to $\widehat B_1(\mathbb R)$.
On the unit circle $\mathbb S$, $B_1^{\#}(\mathbb S)$ is contained in ${\dot W}^1_1(\mathbb S)$. 
Indeed, if $\phi_* \in B_1^{\#}(\mathbb S)$, then $\phi_*-\alpha$ also belongs to $L_1(\mathbb S)$ 
for every $\alpha \in \mathbb C$
by compactness of $\mathbb S$, and it follows from \cite[Theorem 17.66]{Leo} that
$$
\Vert \phi_* \Vert_{{\dot W}^1_1}\lesssim
\Vert \phi_* \Vert_{B_1^{\#}}+\inf_{\alpha \in \mathbb C}\Vert \phi_*-\alpha \Vert_{L_1}<\infty.
$$
In addition, this is bounded by 
$\Vert \phi_* \Vert_{B_1^{\#}}+2\pi \Vert \phi_* \Vert_{\rm BMO} \lesssim \Vert \phi_* \Vert_{\widehat B_1}$.
Therefore, $\Vert \phi_* \Vert_{B_1^{\#}}+\Vert \phi_* \Vert_{{\dot W}^1_1} \lesssim \Vert \phi_* \Vert_{\widehat B_1}$
is satisfied. The converse estimate follows from $\Vert \phi_* \Vert_{\rm BMO} \leq \Vert \phi_* \Vert_{L_\infty}$ and
$$
\inf_{\alpha \in \mathbb C}\Vert \phi_*-\alpha \Vert_{L_\infty} \leq \Vert \phi_* \Vert_{{\dot W}^1_1}
$$ 
which is in the specificity of one-dimensional spaces. 
Thus, $\Vert \phi_* \Vert_{B_1^{\#}}+\Vert \phi_* \Vert_{{\dot W}^1_1} \asymp \Vert \phi_* \Vert_{\widehat B_1}$.

The corresponding result for $\widehat B_1(\mathbb R)$ is true
due to the isomorphism $K_*:\widehat{B}_1(\mathbb R) \to \widehat{B}_1(\mathbb S)$ by Proposition \ref{isomorphism2}
as well as 
$\Vert K_*(\phi) \Vert_{{\dot W}^1_1} = \Vert \phi \Vert_{{\dot W}^1_1}$.
Indeed, these imply
$$
\Vert \phi \Vert_{{\dot W}^1_1} = \Vert \phi_* \Vert_{{\dot W}^1_1} 
\lesssim \Vert \phi_* \Vert_{\widehat B_1} \asymp \Vert \phi \Vert_{\widehat B_1}.
$$
The converse estimate $\Vert \phi \Vert_{\rm BMO} \leq \Vert \phi \Vert_{{\dot W}^1_1}$ is obtained
in the same reason as above.
\end{proof}

\section{The Hilbert transform and the Szeg\"o projection}\label{4}

We introduce holomorphic functions on $\mathbb H$ whose boundary extension to $\mathbb R$ is 
in the Besov space on $\mathbb R$, which are called analytic Besov functions.
Conversely, we see that any Besov function on $\mathbb R$ can be represented as a sum of
the boundary extensions of analytic Besov functions on the upper half-plane $\mathbb H^+$
and the lower half-plane $\mathbb H^-$.

\begin{definition}
For $p > 1$, we define the {\it analytic Besov space} $\mathcal{B}_p(\mathbb{H})$ 
as the set of holomorphic functions $\Phi$ on $\mathbb{H}$ for which the semi-norm
$$
\Vert \Phi \Vert_{\mathcal{B}_p}=\left(\int_{\mathbb{H}}|(\mathrm{Im}\,z)\Phi'(z)|^p
\frac{dxdy}{|\mathrm{Im}\,z|^{2}}\right)^{1/p}
$$
is finite. Moreover, for $p \geq 1$,
$$
\Vert \Phi \Vert_{\mathcal{B}_p^\#}=\left(\int_{\mathbb{H}}|(\mathrm{Im}\,z)^2\Phi''(z)|^p
\frac{dxdy}{|\mathrm{Im}\,z|^{2}}\right)^{1/p}
$$
defines the set $\mathcal{B}_p^{\#}(\mathbb{R})$ consisting of those $\Phi$ for which this quantity is finite.
\end{definition}

\begin{remark}
For $p = 1$, functions $\Phi$ such that $\Vert \Phi \Vert_{\mathcal{B}_p} < \infty$ are restricted to constant functions.
See \cite[p.132]{Zhu}.
\end{remark}

We see that $\mathcal{B}_p(\mathbb{H}) \hookrightarrow \mathcal{B}_q(\mathbb{H})$ and 
$\mathcal{B}_p^{\#}(\mathbb{H}) \hookrightarrow \mathcal{B}_q^{\#}(\mathbb{H})$
are continuous for $p<q$ (see \cite[Propositions 2.1, 2.2]{WM-*}).

A holomorphic function $\Phi$ on $\mathbb{H}$ is {\it BMOA} if it satisfies the Carleson measure condition
$$
\Vert \Phi \Vert_{\mathrm{BMOA}}^2=\sup_{I \subset \mathbb{R}\,}\frac{1}{|I|}
\int_{\widehat{I} \subset \mathbb{H}} |(\mathrm{Im}\,z)\Phi'(z)|^2\frac{dxdy}{|\mathrm{Im}\,z|}<\infty,
$$
where the supremum is taken over all bounded intervals $I$ on $\mathbb{R}$, and $\widehat{I}$ denotes the square in $\mathbb{H}$ with $I$ as its base. The set of all BMOA functions is denoted by $\mathrm{BMOA}(\mathbb{H})$.

\begin{definition}
For $p \geq 1$, we define the function space $\widehat{\mathcal{B}}_p(\mathbb{H})$ as
$$
\widehat{\mathcal{B}}_p(\mathbb{H}) = \mathcal{B}_p^{\#}(\mathbb{H}) \cap \mathrm{BMOA}(\mathbb{H}),
$$
with the semi-norm
$$
\Vert \Phi \Vert_{\widehat{\mathcal{B}}_p}=\Vert \Phi \Vert_{\mathcal{B}_p^{\#}}+\Vert \Phi \Vert_{\mathrm{BMOA}}.
$$
\end{definition}

For $p > 1$, the semi-norms $\Vert \Phi \Vert_{\mathcal{B}_p}$ and $\Vert \Phi \Vert_{\widehat{\mathcal{B}}_p}$ are equivalent. 
This is because $\Vert \Phi \Vert_{\mathcal{B}_p^{\#}} \lesssim \Vert \Phi \Vert_{\mathcal{B}_p}$ and
$\Vert \Phi \Vert_{\mathrm{BMOA}} \lesssim \Vert \Phi \Vert_{\mathcal{B}_p}$ (see \cite[Section 2]{WM-*}).
The rationale for adding the BMOA norm is similar to that in the case of the Besov space norm.

Ignoring differences in constant functions allows $\Vert \cdot \Vert_{\mathcal{B}_p}$ and $\Vert \cdot \Vert_{\widehat{\mathcal{B}}_p}$ to be treated as norms, making it possible to regard $\mathcal{B}_p(\mathbb{H})$ and $\widehat{\mathcal{B}}_p(\mathbb{H})$ as complex Banach spaces.

Here is the relationship between integrable Beltrami coefficients and analytic Besov spaces (see \cite[Theorem 3.11]{WM-*}):

\begin{lemma}\label{conformal}
Let $p \geq 1$.
For a quasiconformal self-homeomorphism $F^{\mu}$ of $\mathbb{C}$, which has complex dilatation $\mu$ in $\mathbb{H}^+$ and is conformal in $\mathbb{H}^-$, the condition $\mu \in M_p(\mathbb{H}^+)$ is equivalent to $\log (F^{\mu}|_{\mathbb{H}^-})' \in \widehat{\mathcal{B}}_p(\mathbb{H}^-)$.
\end{lemma}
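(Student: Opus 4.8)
The plan is to reduce the equivalence to the Bers embedding characterization of $T_p$ recalled in Section~\ref{2} and then to bridge the Schwarzian derivative with the primitive pre-Schwarzian $\Phi=\log f'$, where $f=F^{\mu}|_{\mathbb H^-}$. The algebraic backbone is the cocycle identity $S_f=\Phi''-\tfrac12(\Phi')^2$. The two components of the target space $\widehat{\mathcal B}_p(\mathbb H^-)=\mathcal B_p^{\#}(\mathbb H^-)\cap\mathrm{BMOA}(\mathbb H^-)$ are then handled separately. The $\mathrm{BMOA}$ component is automatic in both directions: since $T_p\subset T_B$, the hypothesis $\mu\in M_p(\mathbb H^+)$ gives $\mu\in M_B(\mathbb H^+)$ and hence $\Phi\in\mathrm{BMOA}(\mathbb H^-)$ by the Astala--Zinsmeister theory, while conversely $\Phi\in\widehat{\mathcal B}_p(\mathbb H^-)$ contains $\Phi\in\mathrm{BMOA}(\mathbb H^-)$ by definition. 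It therefore remains to match the $p$-integrability of $\mu$ with $\Phi\in\mathcal B_p^{\#}(\mathbb H^-)$; and by the characterization of $\alpha(T_p)$ in Section~\ref{2}, the former is equivalent to $S_f\in\mathcal A_p(\mathbb H^-)$. Thus I would prove, under the standing assumption $\Phi\in\mathrm{BMOA}(\mathbb H^-)$, the equivalence $S_f\in\mathcal A_p(\mathbb H^-)\Leftrightarrow\Phi\in\mathcal B_p^{\#}(\mathbb H^-)$.

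The bridge comes from multiplying the cocycle identity by $(\mathrm{Im}\,z)^2$, namely
$$
(\mathrm{Im}\,z)^2 S_f=(\mathrm{Im}\,z)^2\Phi''-\tfrac12\bigl((\mathrm{Im}\,z)\Phi'\bigr)^2,
$$
and applying the triangle inequality in the hyperbolic $L_p$-norm. This bounds each of $\Vert S_f\Vert_{\mathcal A_p}$ and $\Vert\Phi\Vert_{\mathcal B_p^{\#}}$ by the other plus the nonlinear term $\bigl\Vert((\mathrm{Im}\,z)\Phi')^2\bigr\Vert$ measured in the hyperbolic $L_p$-norm. The key observation is that this nonlinear term equals $\Vert\Phi\Vert_{\mathcal B_{2p}}^2$, the square of the $\mathcal B_{2p}$-seminorm for the noncritical exponent $2p$, since both integrands reduce to $(\mathrm{Im}\,z)^{2p-2}|\Phi'|^{2p}$. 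Consequently the whole equivalence reduces to the single assertion that $\Phi\in\mathcal B_{2p}(\mathbb H^-)$.

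I expect the control of this nonlinear term to be the main obstacle, and it is here that the critical case $p=1$ separates from $p>1$. The difficulty is that the term couples $\Phi$ to itself quadratically, so it cannot be disposed of by a soft inequality. The Bloch bound $\sup_{\mathbb H^-}(\mathrm{Im}\,z)|\Phi'|\lesssim\Vert\Phi\Vert_{\mathrm{BMOA}}$ is by itself insufficient, because it only reduces $\Vert\Phi\Vert_{\mathcal B_{2p}}^{2p}$ to the globally divergent integral $\int_{\mathbb H^-}|\Phi'|^2\,dxdy$; one must instead exploit the genuine Carleson-measure structure of $|(\mathrm{Im}\,z)\Phi'|^2\,dxdy/\mathrm{Im}\,z$ supplied by $\mathrm{BMOA}$. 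For $p>1$ the matter is comparatively soft: the continuous embedding $\mathcal B_p(\mathbb H^-)\hookrightarrow\mathcal B_{2p}(\mathbb H^-)$ for $p<2p$, together with $\mathcal B_p^{\#}\asymp\mathcal B_p$, yields the a~priori bound $\Vert\Phi\Vert_{\mathcal B_{2p}}\lesssim\Vert\Phi\Vert_{\mathcal B_p^{\#}}$, and the finiteness of $\Vert\Phi\Vert_{\mathcal B_p^{\#}}$ can be secured either from the earlier $p>1$ theory or by a small-norm absorption near the basepoint propagated through the biholomorphic right translations of $T_p$. For $p=1$ this chain breaks at its first link, since $\mathcal B_1(\mathbb H^-)$ consists only of constants, so the embedding argument is unavailable and one is forced to establish $\Phi\in\mathcal B_2(\mathbb H^-)$, that is $\Phi'\in L_2(\mathbb H^-,dxdy)$, directly from $\Phi''\in L_1(\mathbb H^-,dxdy)$ (the content of $\Phi\in\mathcal B_1^{\#}$) together with the Carleson bound, by a square-function or Carleson-embedding argument. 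This is the technical crux that I would expect to occupy the bulk of the work.

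Finally I would assemble the two implications. If $\mu\in M_p(\mathbb H^+)$, then $S_f\in\mathcal A_p$ and $\Phi\in\mathrm{BMOA}$, whence $\Phi\in\mathcal B_p^{\#}$ by the bridge, giving $\Phi\in\widehat{\mathcal B}_p(\mathbb H^-)$; conversely $\Phi\in\widehat{\mathcal B}_p(\mathbb H^-)$ yields $\Phi\in\mathrm{BMOA}$ and $\Phi\in\mathcal B_p^{\#}$, hence $S_f\in\mathcal A_p$, and the Bers characterization returns membership in $T_p$. I would read the condition $\mu\in M_p(\mathbb H^+)$ at the level of the Teich\-m\"ul\-ler class, since the conformal part $f$ depends only on $[\mu]$: the $\Leftarrow$ direction produces $[\mu]\in T_p$, while the $\Rightarrow$ direction uses the given representative directly. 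The one place where this assembly is delicate is the reverse bridge for $p=1$, where the implicit inequality $\Vert\Phi\Vert_{\mathcal B_1^{\#}}\le\Vert S_f\Vert_{\mathcal A_1}+\tfrac12\Vert\Phi\Vert_{\mathcal B_2}^2$ must be fed by the independent $p=1$ estimate of the previous paragraph rather than by an absorption argument.
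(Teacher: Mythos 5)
The paper itself gives no proof of Lemma \ref{conformal}: it is imported wholesale from \cite[Theorem 3.11]{WM-*}, so the comparison is really with that reference chain, whose standard route your skeleton does follow --- the cocycle identity $S_f=\Phi''-\tfrac12(\Phi')^2$, the Bers-embedding characterization of $T_p$, BMOA via the Astala--Zinsmeister theory, and the (correct) observation that the nonlinear term equals $\Vert\Phi\Vert_{\mathcal B_{2p}}^2$. The genuine gap is in the forward bridge at $p=1$, the one case your plan does not delegate to prior results. There you must deduce $\Phi\in\mathcal B_1^{\#}$ from $S_f\in\mathcal A_1$ via $\Vert\Phi\Vert_{\mathcal B_1^{\#}}\le\Vert S_f\Vert_{\mathcal A_1}+\tfrac12\Vert\Phi\Vert_{\mathcal B_2}^2$, and you propose to feed this with ``the independent $p=1$ estimate of the previous paragraph''. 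But that estimate takes $\Phi''\in L_1(dxdy)$ --- that is, $\Phi\in\mathcal B_1^{\#}$ --- as its hypothesis, so it cannot be used to produce $\Phi\in\mathcal B_1^{\#}$: the argument is circular. Absorption does not rescue it either, since the alternative estimate $\int|\Phi''|^2y^2\,dxdy\lesssim\int|S_f|^2y^2\,dxdy+\sup(y|\Phi'|)^2\int|\Phi'|^2\,dxdy$ would need the Bloch constant $\sup y|\Phi'|$ (bounded only by an absolute constant from univalence, or by $\Vert\Phi\Vert_{\rm BMOA}$) to be small, which it is not. The repair is easy and lies inside your own framework: $\mathcal A_1\hookrightarrow\mathcal A_2$ (stated in Section \ref{2}) gives $S_f\in\mathcal A_2\cap\alpha(T)$, whence the known $p=2$ theory \cite{Cu} yields $\Phi\in\mathcal B_2$ independently of anything at $p=1$; only then does your cocycle inequality close. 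Without this (or an equivalent) step, the $p=1$ forward direction --- precisely the new content relative to the literature you otherwise quote --- is not proved.

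Conversely, the step you single out as ``the technical crux expected to occupy the bulk of the work'' --- getting $\Phi'\in L_2(dxdy)$ from $\Phi''\in L_1(dxdy)$ plus the Carleson bound --- is a one-line interpolation, not a square-function or Carleson-embedding argument. BMOA (or univalence) gives the Bloch bound $\sup\,(\mathrm{Im}\,z)|\Phi'|<\infty$, hence by the Cauchy estimate $\sup\,(\mathrm{Im}\,z)^2|\Phi''|<\infty$, and therefore $\int|\Phi''|^2y^2\,dxdy\le\sup\,(y^2|\Phi''|)\cdot\int|\Phi''|\,dxdy<\infty$; that is, $\Phi\in\mathcal B_2^{\#}$, and the Bergman-space equivalence $\mathcal B_2^{\#}\asymp\mathcal B_2$ at the non-critical exponent $2$ (legitimate for the normalized $\Phi=\log f'$) gives $\Phi\in\mathcal B_2$. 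In short, it is $L_1\cap L_\infty\subset L_2$ applied to $y^2\Phi''$. So the proposal mislocates the difficulty: the direction you declare hard is trivial, while the direction you declare assembled is the one with the hole.
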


Next, we consider the boundary extension of holomorphic functions in the analytic Besov space $\widehat{\mathcal B}_p(\mathbb H)$
for $p \geq 1$. On the half-plane $\mathbb H$ and its boundary $\mathbb R$,
we will see that the trace of $\widehat{\mathcal B}_p(\mathbb H)$
on $\mathbb{R}$ forms a closed subspace of the Besov space $\widehat{B}_p(\mathbb R)$.
The essential issue is the correspondence between ${\mathcal B}_p^{\#}(\mathbb H)$ and $B_p^{\#}(\mathbb R)$
(or between ${\mathcal B}_p(\mathbb H)$ and $B_p(\mathbb R)$ when $p>1$). 

\begin{proposition}\label{boundary}
There are the following relations between the function spaces ${\mathcal B}_p^{\#}(\mathbb H)$ 
and $B_p^{\#}(\mathbb R)$
for $p \geq 1$. The same results hold true between ${\mathcal B}^{\#}_p(\mathbb D)$ and ${B}^{\#}_p(\mathbb S)$.
\begin{enumerate}
\item 
Every holomorphic function $\Phi \in {\mathcal B}_p^{\#}(\mathbb H)$ has non-tangential limits
almost everywhere on $\mathbb R$. This defines the boundary function $\phi$ of $\Phi$ that belongs to 
$B_p^{\#}(\mathbb R)$;
\item 
Conversely, for a measurable function $\phi \in B_p^{\#}(\mathbb R)$ that
is extendable to a holomorphic function on $\mathbb H$, the Poisson integral $\Phi$ of $\phi$ belongs to 
${\mathcal B}_p^{\#}(\mathbb H)$;
\item
In the above correspondence between $\Phi \in {\mathcal B}_p^{\#}(\mathbb H)$ and $\phi \in B_p^{\#}(\mathbb R)$,
their norms are uniformly comparable as $\Vert \Phi \Vert_{{\mathcal B}_p^{\#}} \asymp \Vert \phi \Vert_{B_p^{\#}}$.
\end{enumerate}
\end{proposition}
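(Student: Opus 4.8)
The plan rests on a single structural observation: for a holomorphic $\Phi$ on $\mathbb H$ with $z=x+iy$ one has $\partial_y\Phi=i\Phi'$ and hence $\partial_y^2\Phi=-\Phi''$, so that $|(\mathrm{Im}\,z)^2\Phi''(z)|=|y^2\partial_y^2\Phi(x+iy)|$ pointwise. Writing the defining integral in horizontal slices and factoring the weight gives
$$
\Vert\Phi\Vert_{\mathcal B_p^{\#}}^p=\int_0^\infty y^{2p-2}\Vert\Phi''(\cdot+iy)\Vert_{L_p}^p\,dy=\int_0^\infty\bigl(y^{\,2-1/p}\Vert\partial_y^2\Phi(\cdot+iy)\Vert_{L_p}\bigr)^p\frac{dy}{y}.
$$
When $\Phi$ is the Poisson extension of a boundary function $\phi$, the last expression is precisely the Poisson semigroup characterization of the homogeneous Besov seminorm $\Vert\phi\Vert_{\dot B^{1/p}_{p,p}}$ with differentiation order $k=2$, which is admissible since $k>s=1/p$. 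Thus the whole proposition reduces to the real-variable fact that this Poisson characterization is equivalent to the second-difference characterization, namely $\Vert\phi\Vert_{\dot B^{1/p}_{p,p}}\asymp\Vert\phi\Vert_{B_p^{\#}}$ (order $m=2>s$), together with the identification of the holomorphic extension with the Poisson extension.

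For the direction (1) and the estimate $\Vert\phi\Vert_{B_p^{\#}}\lesssim\Vert\Phi\Vert_{\mathcal B_p^{\#}}$ in (3), I would first recover a boundary function from $\Phi\in\mathcal B_p^{\#}(\mathbb H)$. Integrating $\Phi''$ in the vertical direction, the finiteness of $\int_0^1 y^{2p-2}\Vert\Phi''(\cdot+iy)\Vert_{L_p}^p\,dy$ forces $\Phi'(\cdot+iy)$, and then $\Phi(\cdot+iy)$, to converge as $y\to 0$ modulo an affine function $az+b$; since an affine function contributes nothing to either seminorm (its second difference and its $\Phi''$ both vanish), this ambiguity is harmless. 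This produces a boundary trace $\phi$ and identifies $\Phi(\cdot+iy)$ as its Poisson integral, after which the semigroup-versus-difference equivalence yields $\phi\in B_p^{\#}(\mathbb R)$ with the stated bound. The existence of non-tangential limits almost everywhere then follows from Fatou's theorem applied to the Poisson integral of $\phi$.

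For the converse (2) and $\Vert\Phi\Vert_{\mathcal B_p^{\#}}\lesssim\Vert\phi\Vert_{B_p^{\#}}$, I start from $\phi\in B_p^{\#}(\mathbb R)$ that extends holomorphically and take its Poisson integral $\Phi$; the holomorphic extendability guarantees that the harmonic function $\Phi$ is in fact holomorphic, so the identity $\partial_y^2\Phi=-\Phi''$ is again available, and the semigroup characterization bounds $\Vert\Phi\Vert_{\mathcal B_p^{\#}}$ by $\Vert\phi\Vert_{B_p^{\#}}$. Combining the two directions gives the comparability $\Vert\Phi\Vert_{\mathcal B_p^{\#}}\asymp\Vert\phi\Vert_{B_p^{\#}}$ of (3). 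The parallel statement for $\mathcal B_p^{\#}(\mathbb D)$ and $B_p^{\#}(\mathbb S)$ I would obtain by transporting these estimates through the Cayley transform, using Proposition \ref{isomorphism2} and the conformal invariance of the hyperbolic integral defining $\Vert\cdot\Vert_{\mathcal B_p^{\#}}$.

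The main obstacle is the endpoint $p=1$, where the critical smoothness is $s=1$. As the Remark records, first-order objects see only constants there, so one cannot borrow the first-difference/first-derivative argument used for $p>1$; every step must be carried out with the genuinely second-order quantities, and the crux is to certify that the Poisson semigroup and second-difference characterizations of $\dot B^{1}_{1,1}$ really do coincide at this endpoint. I expect to draw this sharp equivalence from the real-variable theory of homogeneous Besov spaces (in the form available in \cite{Leo, Sa}) rather than re-deriving it. A secondary but genuine technical point, given that $\Vert\cdot\Vert_{\mathcal B_p^{\#}}$ controls only $\Phi''$, is the rigorous passage from $\Phi\in\mathcal B_p^{\#}(\mathbb H)$ to a well-defined boundary trace together with its Poisson representation.
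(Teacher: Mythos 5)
Your half-plane argument is correct and is essentially the paper's own: the paper disposes of (1)--(3) on $\mathbb H$ by citing \cite[\S V.5.1]{St}, which is precisely the classical equivalence you invoke between the Poisson-semigroup characterization of $\dot B^{1/p}_{p,p}$ with second-order $y$-derivatives (admissible since $k=2>s=1/p$) and the second-difference characterization (admissible since $m=2>s$); your identity $\partial_y^2\Phi=-\Phi''$ and the slicing of $\Vert\Phi\Vert_{\mathcal B_p^{\#}}$ are the standard bookkeeping for that reduction, and the trace/Poisson-representation technicalities you flag are the ones the citation absorbs.

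The genuine gap is your final step, the transfer to $\mathcal B_p^{\#}(\mathbb D)$ and $B_p^{\#}(\mathbb S)$ via the Cayley transform. First, the seminorm $\Vert\cdot\Vert_{\mathcal B_p^{\#}}$ is \emph{not} conformally invariant: if $\Psi=\Phi\circ g$ for a M\"obius map $g:\mathbb D\to\mathbb H$, then $\Psi''=(\Phi''\circ g)(g')^2+(\Phi'\circ g)\,g''$, so transporting the integral leaves an error term comparable to the first-order seminorm $\Vert\Phi\Vert_{\mathcal B_p}$. For $p>1$ this is harmless, since first- and second-order seminorms are then equivalent, but at $p=1$ --- the very endpoint you single out as the crux --- $\Vert\Phi\Vert_{\mathcal B_1}$ is finite only for constant $\Phi$ (the paper's Remark after the definition of $\mathcal B_p(\mathbb H)$), so the error term is $+\infty$ for every nonconstant $\Phi\in\mathcal B_1^{\#}(\mathbb H)$ and the invariance argument collapses. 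Second, invoking Proposition \ref{isomorphism2} here is circular in the paper's logical order: its proof is deferred to the end of Section \ref{4} and uses Proposition \ref{decomposition}, which uses the trace operators of Proposition \ref{trace}, which rest on Proposition \ref{boundary} itself. Moreover, Proposition \ref{isomorphism2} concerns $\widehat B_p$ (with the BMO term), not the bare spaces $B_p^{\#}$ appearing in the statement; at $p=1$ these genuinely differ on the line, since by Lemma \ref{W1} one has $\widehat B_1(\mathbb R)=B_1^{\#}(\mathbb R)\cap\dot W^1_1(\mathbb R)$ while $\widehat B_1(\mathbb S)=B_1^{\#}(\mathbb S)$, so the two transfers are not interchangeable. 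The paper avoids all of this by handling the disc case independently, citing the complex-analytic results of \cite[Theorems 2.1 and 5.1]{Pa}; to repair your proposal you would either do the same or rerun your semigroup-versus-difference argument directly on $\mathbb D$ with the circular Poisson kernel, rather than transport the half-plane estimates.
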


There are several references verifying these claims.
Among them, we cite \cite[\S V.5.1]{St} for discussions from real analysis in the case of $\mathbb H$, and
\cite[Theorems 2.1 and 5.1]{Pa} from complex analysis in the case of $\mathbb D$. Concerning statement (2), the Poisson integral of any 
$\phi \in {B}_p^{\#}(\mathbb R)$ not necessarily holomorphically extendable can be also characterized by
the integrability of its partial derivatives generalizing the definition of 
${\mathcal B}_p^{\#}(\mathbb H)$.

The other factors in the norms
for $\widehat{\mathcal B}_p(\mathbb H)$ and $\widehat{B}_p(\mathbb R)$
correspond neatly under the boundary extension $\Phi \mapsto \phi$ because 
$\Vert \Phi \Vert_{\rm BMOA}
\asymp \Vert \phi \Vert_{\rm BMO}$ (see \cite[Theorem 9.19]{Zhu}). For $p=1$, we may also use
$\Vert \Phi \Vert_{\dot{H}_1^1}
=\Vert \phi \Vert_{{\dot W}^1_1}$ in account of Lemma \ref{W1} and the corresponding claim \cite[Proposition 2.5]{WM-*}
for $\widehat{\mathcal B}_1(\mathbb H)$ in terms of the Hardy space norm $\Vert \Phi \Vert_{\dot{H}_1^1}=
\Vert \Phi' \Vert_{{H}_1}$.

Consequently, we obtain the following:

\begin{proposition}\label{trace}
Any $\Phi \in \widehat{\mathcal{B}}_p(\mathbb{H})$ has non-tangential limits almost everywhere on $\mathbb{R}$, and its boundary function belongs to $\widehat{B}_p(\mathbb{R})$. This defines the trace operator 
$E: \widehat{\mathcal{B}}_p(\mathbb{H}) \to \widehat{B}_p(\mathbb{R})$, which is a Banach isomorphism onto its image.
\end{proposition}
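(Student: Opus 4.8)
The plan is to assemble Proposition~\ref{trace} directly from the two norm correspondences already recorded above: the comparison $\Vert \Phi \Vert_{\mathcal{B}_p^{\#}} \asymp \Vert \phi \Vert_{B_p^{\#}}$ between the $\mathcal{B}_p^{\#}(\mathbb{H})$- and $B_p^{\#}(\mathbb{R})$-seminorms furnished by Proposition~\ref{boundary}(3), and the comparison $\Vert \Phi \Vert_{\rm BMOA} \asymp \Vert \phi \Vert_{\rm BMO}$ for the remaining factors (as cited in the discussion preceding the proposition). Adding these two equivalences will yield the two-sided estimate $\Vert \Phi \Vert_{\widehat{\mathcal{B}}_p} \asymp \Vert \phi \Vert_{\widehat{B}_p}$, from which every assertion of the proposition follows.

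First I would verify that $E$ is well defined. Given $\Phi \in \widehat{\mathcal{B}}_p(\mathbb{H})=\mathcal{B}_p^{\#}(\mathbb{H})\cap {\rm BMOA}(\mathbb{H})$, Proposition~\ref{boundary}(1) guarantees that the non-tangential limit $\phi$ of $\Phi$ exists almost everywhere on $\mathbb{R}$ and lies in $B_p^{\#}(\mathbb{R})$. Since $\Phi$ is also BMOA, its boundary function lies in ${\rm BMO}(\mathbb{R})$; because the non-tangential limit is unique wherever it exists, the two identifications of the boundary value agree, so $\phi \in B_p^{\#}(\mathbb{R})\cap {\rm BMO}(\mathbb{R})=\widehat{B}_p(\mathbb{R})$ and $E(\Phi)=\phi$ is defined. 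Linearity is immediate, since boundary extension commutes with linear combinations and sends constants to constants, so the map descends to the quotients modulo constants.

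Next I would establish the norm equivalence. By definition $\Vert \Phi \Vert_{\widehat{\mathcal{B}}_p}=\Vert \Phi \Vert_{\mathcal{B}_p^{\#}}+\Vert \Phi \Vert_{\rm BMOA}$ and $\Vert \phi \Vert_{\widehat{B}_p}=\Vert \phi \Vert_{B_p^{\#}}+\Vert \phi \Vert_{\rm BMO}$, so combining the two comparisons above gives
$$
\Vert \Phi \Vert_{\widehat{\mathcal{B}}_p} \asymp \Vert \phi \Vert_{\widehat{B}_p}.
$$
The upper bound shows that $E$ is bounded, while the lower bound shows both that $E$ is injective---if $\phi$ vanishes modulo constants, then all seminorms of $\Phi$ vanish, forcing $\Phi$ to be constant---and that the inverse of $E$ on its image is bounded. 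Hence $E$ is a Banach isomorphism onto its image. The image is moreover closed: if $E(\Phi_n)$ converges in $\widehat{B}_p(\mathbb{R})$, the lower bound makes $(\Phi_n)$ Cauchy in the Banach space $\widehat{\mathcal{B}}_p(\mathbb{H})$, and its limit $\Phi$ satisfies $E(\Phi)=\lim_n E(\Phi_n)$ by continuity, so the limit lies in the image. This is what lets us later regard $\widehat{\mathcal{B}}_p(\mathbb{H})$ as a closed subspace of $\widehat{B}_p(\mathbb{R})$.

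The only place requiring extra care is $p=1$, where the BMO/BMOA factors are most delicate. There I would replace the BMOA--BMO comparison by the Hardy--Sobolev identity $\Vert \Phi \Vert_{\dot{H}_1^1}=\Vert \phi \Vert_{{\dot W}^1_1}$ together with Lemma~\ref{W1}, which rewrites $\Vert \phi \Vert_{\widehat{B}_1}$ equivalently as $\Vert \phi \Vert_{B_1^{\#}}+\Vert \phi \Vert_{{\dot W}^1_1}$ and, via \cite[Proposition 2.5]{WM-*}, rewrites $\Vert \Phi \Vert_{\widehat{\mathcal{B}}_1}$ equivalently using $\Vert \Phi \Vert_{\dot{H}_1^1}=\Vert \Phi' \Vert_{H_1}$. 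With these substitutions the same additive combination with Proposition~\ref{boundary}(3) yields $\Vert \Phi \Vert_{\widehat{\mathcal{B}}_1} \asymp \Vert \phi \Vert_{\widehat{B}_1}$, and the argument closes exactly as above. The genuine analytic content---the trace theorem of Proposition~\ref{boundary} and the boundary identifications of the BMOA and Hardy norms---has already been supplied, so the real obstacle here is purely organizational: ensuring that both seminorm components are transported by one and the same boundary operator, and that the Poisson integral reconstructs $\Phi$ on each component simultaneously, which is automatic once uniqueness of the non-tangential limit is invoked.
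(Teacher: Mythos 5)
Your proposal is correct and follows essentially the same route as the paper: the paper also obtains Proposition~\ref{trace} by combining Proposition~\ref{boundary} with the boundary correspondence $\Vert \Phi \Vert_{\rm BMOA} \asymp \Vert \phi \Vert_{\rm BMO}$ (citing \cite[Theorem 9.19]{Zhu}), and likewise notes the alternative $\Vert \Phi \Vert_{\dot{H}_1^1}=\Vert \phi \Vert_{{\dot W}^1_1}$ via Lemma~\ref{W1} and \cite[Proposition 2.5]{WM-*} for $p=1$. The only minor difference is that the paper treats the Hardy--Sobolev identity as an optional supplement rather than a necessary replacement at $p=1$ (the BMOA/BMO comparison is $p$-independent), but your argument is valid either way.
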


Considering the both half-planes $\mathbb H^+$ and $\mathbb H^-$,
we obtain the closed subspaces 
$E^+(\widehat{\mathcal B}_p(\mathbb H^+))$ and $E^-(\widehat{\mathcal B}_p(\mathbb H^-))$ 
in $\widehat{B}_p(\mathbb R)$. The functions in
$E^+(\widehat{\mathcal B}_p(\mathbb H^+))$ and $E^-(\widehat{\mathcal B}_p(\mathbb H^-))$ correspond by
complex conjugation.
By the identification
under the Banach isomorphism $E^\pm:\widehat{\mathcal B}_p(\mathbb H^\pm) \to \widehat{B}_p(\mathbb R)$,
we may regard $\widehat{\mathcal B}_p(\mathbb H^\pm)$ are
closed subspaces of $\widehat{B}_p(\mathbb R)$. 

Conversely, the projection associated with $E$ from the Besov space $\widehat{B}_p(\mathbb{R})$ to the analytic Besov space 
$\widehat{\mathcal B}_p(\mathbb H) \cong E(\widehat{\mathcal B}_p(\mathbb H))$ is specifically provided
by using the following map.

\begin{definition}
For a rapidly decreasing function $\phi$ in the Schwartz class on $\mathbb{R}$,
we define the singular integral
$$
\mathcal{H}(\phi)(x) = \text{p.v.}\frac{1}{\pi}\int_{-\infty}^{\infty} \frac{\phi(t)}{x-t} dt
=\lim_{\varepsilon \to 0} \frac{1}{\pi}\int_{|t| \geq \varepsilon} \frac{\phi(t)}{x-t}dt  \quad (x \in \mathbb{R}),
$$
and extend it to a linear operator on $\widehat{B}_p(\mathbb{R})$ to be called the {\it Hilbert transform}. 
\end{definition}

There are several ways of extending $\mathcal{H}$ to a linear operator on $\widehat{B}_p(\mathbb{R})$. 
Especially on ${\rm BMO}(\mathbb R)$, we may use the regularized 
kernel
$$
\frac{1}{x-t}+\frac{1_{\{|t| \geq 1\}}}{t} \quad {\rm or} \quad \frac{1}{x-t}+\frac{t}{1+t^2}
$$
for $\mathcal{H}$ instead of $(x-t)^{-1}$. This defines the extension up to an additive constant. 
It is also possible to
extend $\mathcal{H}$ as the operator on certain distributions (see \cite[Chapter 4]{Pan}).

It is well-known that $\mathcal H$ gives a Banach automorphism of ${\rm BMO}(\mathbb R)$ 
satisfying ${\mathcal H} \circ {\mathcal H}=-I$
(see \cite[Chapter VI]{Ga} and \cite{Jia}). 
The Besov space $B_p^{\#}(\mathbb R)$ for $p \geq 1$ possesses the same property with $\mathcal H$ 
(see \cite[Proposition 4.7]{GP}).
Then, we obtain the following:

\begin{proposition}\label{Hilbert}
The Hilbert transform $\mathcal H$ is a Banach automorphism of $\widehat B_p(\mathbb R)$ 
satisfying ${\mathcal H} \circ {\mathcal H}=-I$ for $p \geq 1$.
\end{proposition}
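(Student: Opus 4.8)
The plan is to read off Proposition~\ref{Hilbert} for the intersection space $\widehat B_p(\mathbb R) = B_p^{\#}(\mathbb R) \cap {\rm BMO}(\mathbb R)$ directly from the two facts recorded just above the statement: that $\mathcal H$ is a Banach automorphism of ${\rm BMO}(\mathbb R)$ with $\mathcal H \circ \mathcal H = -I$, and that the same holds on $B_p^{\#}(\mathbb R)$ for every $p \geq 1$. Since the defining seminorm of $\widehat B_p(\mathbb R)$ is the sum $\Vert \phi \Vert_{B_p^{\#}} + \Vert \phi \Vert_{\rm BMO}$, boundedness on each factor should assemble into boundedness on the intersection, and the algebraic relation $\mathcal H \circ \mathcal H = -I$ should then force invertibility.

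First I would fix $\phi \in \widehat B_p(\mathbb R)$. Because $\phi \in B_p^{\#}(\mathbb R)$ and $\mathcal H$ is bounded there, $\mathcal H \phi \in B_p^{\#}(\mathbb R)$ with $\Vert \mathcal H \phi \Vert_{B_p^{\#}} \lesssim \Vert \phi \Vert_{B_p^{\#}}$; because $\phi \in {\rm BMO}(\mathbb R)$ and $\mathcal H$ is bounded there, $\mathcal H \phi \in {\rm BMO}(\mathbb R)$ with $\Vert \mathcal H \phi \Vert_{\rm BMO} \lesssim \Vert \phi \Vert_{\rm BMO}$. Adding these two estimates gives $\mathcal H \phi \in \widehat B_p(\mathbb R)$ together with $\Vert \mathcal H \phi \Vert_{\widehat B_p} \lesssim \Vert \phi \Vert_{\widehat B_p}$, so $\mathcal H$ restricts to a bounded linear operator on $\widehat B_p(\mathbb R)$.

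The one point that needs care — and which I expect to be the main, if modest, obstacle — is that the extension of $\mathcal H$ to ${\rm BMO}(\mathbb R)$ and its extension to $B_p^{\#}(\mathbb R)$ must coincide on the common subspace $\widehat B_p(\mathbb R)$, at least modulo the additive-constant indeterminacy under which these spaces are treated as Banach spaces. Both extensions agree with the principal-value integral on the Schwartz class, and both are computed from the same regularized kernel $(x-t)^{-1}+t(1+t^2)^{-1}$ described above, which determines the operator up to an additive constant. Since we have already agreed to regard $\widehat B_p(\mathbb R)$ as a Banach space only after quotienting out constants (note that on $\widehat B_p(\mathbb R)$ the common kernel of the two seminorms is exactly the constants, as a nonconstant affine function already fails to be ${\rm BMO}$), the two extensions define the same operator on $\widehat B_p(\mathbb R)$. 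I would record this compatibility explicitly before combining the two boundedness estimates.

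Finally, the relation $\mathcal H \circ \mathcal H = -I$ holds on $B_p^{\#}(\mathbb R)$ and on ${\rm BMO}(\mathbb R)$, hence on their intersection $\widehat B_p(\mathbb R)$, in the same after-quotient sense in which $-I$ acts. This identity makes $\mathcal H$ its own inverse up to sign: it is injective, since $\mathcal H \phi = 0$ forces $\phi = -\mathcal H(\mathcal H \phi) = 0$, and surjective, since any $\phi$ equals $\mathcal H(-\mathcal H \phi)$; moreover the inverse $-\mathcal H$ is bounded by the estimate already established. Therefore $\mathcal H$ is a Banach automorphism of $\widehat B_p(\mathbb R)$ satisfying $\mathcal H \circ \mathcal H = -I$, which completes the proof.
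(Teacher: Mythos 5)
Your proof is correct and takes essentially the same route as the paper, which obtains the proposition precisely by combining the cited automorphism property of $\mathcal H$ (with ${\mathcal H} \circ {\mathcal H}=-I$) on ${\rm BMO}(\mathbb R)$ and on $B_p^{\#}(\mathbb R)$, the conclusion for the intersection $\widehat B_p(\mathbb R)$ following from the sum seminorm. The only difference is that you make explicit two steps the paper leaves implicit: the compatibility of the two extensions of $\mathcal H$ modulo additive constants, and the derivation of invertibility from ${\mathcal H} \circ {\mathcal H}=-I$.
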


Let
$P^\pm = \frac{1}{2}(I \pm i\mathcal{H})$, which we call the {\it Riesz projections}.
On the unit circle $\mathbb S$, the positive Riesz projection $P^+$ can be alternatively defined by the positive part of the Fourier transform of a function $\phi_*$ on $\mathbb S$, and the negative Riesz projection $P^-$ by the negative part.

We can apply the Riesz projections $P^\pm$ to $\widehat B_p(\mathbb R)$
by virtue of Proposition \ref{Hilbert}. We note that $P^++P^-=I$ and $P^+ \circ P^-=P^- \circ P^+=O$
by the definition and the property ${\mathcal H} \circ {\mathcal H}=-I$.
Moreover, 
the images of $P^\pm$ coincide with $E^\pm(\widehat {\mathcal B}_p(\mathbb H^\pm))$, which are the closed subspaces of 
$\widehat B_p(\mathbb R)$ consisting of all elements that extend to holomorphic functions on $\mathbb H^\pm$ by
the Poisson integral. 

\begin{proposition}\label{decomposition}
The Riesz projections $P^\pm$ in the Besov space $\widehat B_p(\mathbb R)$
for $p \geq 1$ are bounded linear projections onto the closed subspaces $E^\pm(\widehat {\mathcal B}_p(\mathbb H^+))$.
They yield the topological direct sum decomposition
$$
\widehat B_p(\mathbb R)=E^+(\widehat {\mathcal B}_p(\mathbb H^+)) \oplus E^-(\widehat {\mathcal B}_p(\mathbb H^-)).
$$
\end{proposition}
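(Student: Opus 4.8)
The plan is to derive everything from the operator identity $\mathcal H\circ\mathcal H=-I$ of Proposition \ref{Hilbert}, together with the complex-analytic description of $E^\pm(\widehat{\mathcal B}_p(\mathbb H^\pm))$ furnished by Propositions \ref{boundary} and \ref{trace}. First I would observe that $P^\pm=\frac12(I\pm i\mathcal H)$ are bounded linear operators on $\widehat B_p(\mathbb R)$, since $\mathcal H$ is a bounded automorphism there. Expanding $(P^\pm)^2$ and $P^+P^-$ and invoking $\mathcal H^2=-I$ then yields $(P^\pm)^2=P^\pm$, $P^++P^-=I$, and $P^+\circ P^-=P^-\circ P^+=O$, so that $P^\pm$ are complementary bounded projections. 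In particular $P^+(\widehat B_p(\mathbb R))=\ker P^-$ and $P^-(\widehat B_p(\mathbb R))=\ker P^+$, by the standard argument for complementary projections.

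The core step is to identify these ranges with the analytic subspaces; it suffices to treat $P^+(\widehat B_p(\mathbb R))=E^+(\widehat{\mathcal B}_p(\mathbb H^+))$, the case of $P^-$ and $\mathbb H^-$ being symmetric. For the inclusion into $E^+(\widehat{\mathcal B}_p(\mathbb H^+))$, I would use the Sokhotski--Plemelj relation: for $\phi\in\widehat B_p(\mathbb R)$ the non-tangential boundary value of the Cauchy integral $\frac1\pi\int_{\mathbb R}\frac{\phi(t)}{z-t}\,dt$ over $z\in\mathbb H^+$ equals $\mathcal H\phi-i\phi=-2iP^+\phi$, so that $P^+\phi$ is, up to the constant $-2i$ and an additive constant, the trace of a holomorphic function on $\mathbb H^+$; by Propositions \ref{boundary} and \ref{trace} this trace lies in $E^+(\widehat{\mathcal B}_p(\mathbb H^+))$. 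For the reverse inclusion, if $\psi=E^+(\Psi)$ is the boundary value of some $\Psi\in\widehat{\mathcal B}_p(\mathbb H^+)$, then the same relation, applied to the reproducing Cauchy integral of the holomorphic $\Psi$, gives $\mathcal H\psi=-i\psi$, whence $P^+\psi=\psi$ and $P^-\psi=0$, so $\psi\in P^+(\widehat B_p(\mathbb R))$. On $\mathbb S$ one may argue more directly through the Fourier characterization of $P^\pm$ recorded above, transferring the result to $\mathbb R$ via Proposition \ref{isomorphism2}.

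Once the ranges are identified, the decomposition follows formally. Every $\phi$ satisfies $\phi=P^+\phi+P^-\phi$ with summands in $E^+(\widehat{\mathcal B}_p(\mathbb H^+))$ and $E^-(\widehat{\mathcal B}_p(\mathbb H^-))$, so the two subspaces span $\widehat B_p(\mathbb R)$; and if $\psi$ lies in both, then $P^-\psi=0$ while $\psi=P^-\psi$, forcing $\psi=0$, so the sum is direct. Boundedness of $P^\pm$ makes the decomposition topological. I expect the genuine obstacle to be the range identification of the second paragraph, namely matching the singular-integral definition of $\mathcal H$ with the holomorphic trace picture: the orientation conventions for $\mathbb H^+$ versus $\mathbb H^-$ and the fact that $\mathcal H$ is defined only modulo additive constants (harmless for the Besov semi-norm) must be tracked with care.
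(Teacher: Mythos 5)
Your proposal is correct and follows essentially the same route as the paper: Proposition \ref{Hilbert} supplies boundedness and $\mathcal H\circ\mathcal H=-I$, the projection algebra is formal, and the ranges of $P^\pm$ are identified with $E^\pm(\widehat{\mathcal B}_p(\mathbb H^\pm))$ via the Plemelj formula combined with Propositions \ref{boundary} and \ref{trace} --- precisely the ingredients the paper assembles in the discussion preceding the statement. One caution on your closing alternative: transferring the Fourier-based circle result to $\mathbb R$ via Proposition \ref{isomorphism2} would be circular in the case $p=1$, since the paper proves Proposition \ref{isomorphism2} for $p=1$ by using Proposition \ref{decomposition} itself.
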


\begin{remark}
The boundedness of the Riesz projections $P^\pm$ on $B_p^{\#}(\mathbb S)$ for $p \geq 1$ was 
verified in \cite[p.449]{Pe} in a way of using the definition of $P^\pm$ by the Fourier
transform and the characterization of $B_p^{\#}(\mathbb S)$ in terms of approximation by polynomials.
From this, we see that $P^\pm$ is bounded on $\widehat B_p(\mathbb S)$,
and hence so is the Hilbert transform $\mathcal H=-i(2P^+-I)=i(2P^--I)$.
\end{remark}

\begin{definition}
Holomorphic functions of the upper and the lower half-planes $\mathbb H^\pm$ defined by
the Cauchy integrals of $\phi$,
$$
\frac{1}{\pi}\int_{-\infty}^{\infty} \frac{\phi(t)}{z-t} dt
\quad (z \in \mathbb{H}^\pm),
$$
are called the {\it Szeg\"o projections} of $\phi$.
\end{definition}

We see that the Szeg\"o projections
give the bounded linear maps $\widehat B_p(\mathbb R) \to \widehat {\mathcal B}_p(\mathbb H^\pm)$ whose composition
with the trace operators
$E^\pm:\widehat {\mathcal B}_p(\mathbb H^\pm) \to \widehat B_p(\mathbb R)$ coincide with the Riesz projections $P^\pm$
(by the Plemelj formula in the special case).
In the sequel, we do not distinguish them, denote both of them by $P^\pm$, and call the Szeg\"o projections.
Moreover, we regard $\widehat {\mathcal B}_p(\mathbb H^\pm)$
as the subspaces of $\widehat B_p(\mathbb R)$
by omitting $E^\pm$ and represent the direct sum decomposition of $\widehat B_p(\mathbb R)$ 
in Proposition \ref{decomposition} by 
$$
\widehat B_p(\mathbb R)=\widehat {\mathcal B}_p(\mathbb H^+) \oplus \widehat {\mathcal B}_p(\mathbb H^-).
$$

Finally in this section, we apply the above results to: 

\begin{proof}[Proof of Proposition \ref{isomorphism2}]
By the decomposition of $\widehat B_p(\mathbb R)$ as in Proposition \ref{decomposition},
it suffices to prove that $K_*:\widehat B_p(\mathbb R) \to
\widehat B_p(\mathbb S)$ is a Banach isomorphism restricted to
$\widehat {\mathcal B}_p(\mathbb H^\pm)$.
This has been verified by \cite[Theorem 2.4]{WM-*}, and thus we obtain the required result.
\end{proof}

\section{Composition Operators and interpolation}\label{5}

The composition operator $C_h$ defined for a quasisymmetric homeomorphism $h:\mathbb{R} \to \mathbb{R}$ 
is introduced as acting on 
the space of functions $\phi$ on $\mathbb{R}$ by $C_h(\phi)=\phi \circ h$. Then, we prove that
$C_h$ is a bounded linear operator on $\widehat B_p(\mathbb R)$. 

\begin{proposition}\label{p>1}
For $p>1$, the composition operator $C_h$ induced by any quasisymmetric homeomorphism $h$ gives
a Banach automorphism of ${B}_{p}(\mathbb{R})$ such that
the operator norm $\Vert C_h \Vert$ is bounded by a constant
depending only on the doubling (quasisymmetry) constant of $h$.
The converse also holds. 
\end{proposition}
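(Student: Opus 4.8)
The plan is to reduce everything to the two-sided seminorm estimate $\Vert \phi\circ h\Vert_{B_p}\asymp \Vert\phi\Vert_{B_p}$ with implied constants depending only on the quasisymmetry constant $M$ of $h$. Since $C_h$ is linear, preserves constants, and satisfies $C_h\circ C_{h^{-1}}=C_{h^{-1}}\circ C_h=\mathrm{id}$, and since $h^{-1}$ is again quasisymmetric with a constant controlled by $M$, it suffices to prove the one-sided bound $\Vert C_h\phi\Vert_{B_p}\lesssim\Vert\phi\Vert_{B_p}$ (with constant depending only on $M$) for every $M$-quasisymmetric $h$: applying it to $h$ and to $h^{-1}$ yields that $C_h$ is a bounded bijection with bounded inverse, hence a Banach automorphism whose norm is controlled by $M$. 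I emphasize at the outset that $h$ need not be absolutely continuous, so a naive change of variables in the Gagliardo double integral defining $\Vert\cdot\Vert_{B_p}$ is unavailable; the argument must see only the metric distortion encoded by quasisymmetry.

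The conceptual heart is the exponent $p=2$, where $B_2\cong\dot B^{1/2}_{2,2}=H^{1/2}(\mathbb R)$ and the seminorm is comparable to the conformally invariant Dirichlet energy of the harmonic extension. First I would extend $\phi$ harmonically to $u$ on $\mathbb H$ and extend $h$ to a quasiconformal self-map $H$ of $\mathbb H$ whose maximal dilatation $K$ is bounded in terms of $M$. Then $u\circ H$ is an extension of $\phi\circ h$ whose Dirichlet energy is at most $K$ times that of $u$, while the harmonic extension of $\phi\circ h$ minimizes Dirichlet energy among all extensions; hence $\Vert\phi\circ h\Vert_{B_2}^2\lesssim K\,\Vert\phi\Vert_{B_2}^2$. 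Crucially this argument never differentiates $h$ on $\mathbb R$, so it applies to an arbitrary quasisymmetric $h$.

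To pass from $p=2$ to general $p$ I would invoke complex interpolation along the critical line $s=1/p$. Since $B_p\cong\dot B^{1/p}_{p,p}$, one has $[B_{p_0},B_{p_1}]_\theta=B_p$ whenever $1/p=(1-\theta)/p_0+\theta/p_1$, so the boundedness of $C_h$ propagates from endpoint exponents with operator norm controlled by the geometric mean of the endpoint norms, preserving dependence on $M$ alone. For the range $p\ge 2$ one endpoint is $p=2$ above and the other is the elementary limiting endpoint $\dot B^0_{\infty,\infty}$, whose seminorm is the oscillation $\sup_{x,t}|\phi(x+t)-\phi(x)|$; since $h$ is a bijection, $C_h$ is an isometry there for any homeomorphism. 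For $1<p<2$ one pairs $p=2$ with a base exponent in $(1,2)$, obtained either through the dual formulation (using $B_p^\ast\cong B_{p'}$ up to an order-one Riesz potential, which converts $C_h$ into the adjoint composition built from $C_{h^{-1}}$ and the weight $h'$) or by a symmetric argument on the dual scale; this lower range is the delicate part.

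The essential obstacle is that the analytic Besov energy $\Vert\Phi\Vert_{\mathcal B_p}^p=\int_{\mathbb H}|\Phi'(z)|^p(\mathrm{Im}\,z)^{p-2}\,dx\,dy$ attached to $B_p$ through Proposition \ref{boundary} is conformally invariant only when $p=2$; for $p\ne 2$ the quasiconformal change of variables in this energy requires a pointwise Koebe-type distortion estimate relating $|DH|$, $J_H$, $\mathrm{Im}\,z$ and $\mathrm{Im}\,H(z)$ through the nonzero exponent $p-2$, which fails for a general quasiconformal $H$. Interpolation is precisely the device that bypasses this failure, so the main technical checks are the validity of the interpolation identity for these critical Besov spaces with constants uniform in $M$ and the treatment of $1<p<2$. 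Finally, for the converse I would test $\Vert C_h\phi\Vert_{B_p}\lesssim\Vert\phi\Vert_{B_p}$ against explicit functions adapted to an interval $I$ — for instance suitably normalized tent functions supported near $h(I)$ with unit $B_p$-seminorm — and read off a uniform bound on $|h(I)|/|h(\tfrac12 I)|$, which is exactly the doubling (quasisymmetry) condition.
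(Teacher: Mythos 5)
Your $p=2$ step is correct and classical (Douglas' formula plus quasi-invariance of the Dirichlet integral under a Beurling--Ahlfors extension whose dilatation depends only on $M$), and interpolation is indeed the mechanism behind the proof the paper invokes. But your interpolation scheme for $p\neq 2$ cannot work as set up, for a structural reason. The upper endpoint is misidentified: the seminorm $\sup_{x,t}|\phi(x+t)-\phi(x)|$ defines $L_\infty$ modulo constants, not $\dot B^0_{\infty,\infty}$ --- at smoothness $s=0$ there is no difference characterization, and $\dot B^0_{\infty,\infty}$ strictly contains $L_\infty$ (it even contains ${\rm BMO}(\mathbb R)$; composition is not even defined on its distributional elements). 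If you use $L_\infty$ as the endpoint, then every interpolation space of the couple $(B_2,L_\infty)$ lies inside the sum $B_2+L_\infty$, and $B_p\not\subset B_2+L_\infty$ for $p>2$: take $\phi$ equal to $(\log(1/|x|))^{\alpha}$ near the origin (smoothly cut off), with $\tfrac12<\alpha<1-\tfrac1p$. Then $\phi\in B_p$, but in any splitting $\phi=\phi_0+\phi_1$ with $\phi_1$ bounded, the means $a_j$ of $\phi_0$ over $(2^{-j-1},2^{-j})$ grow like $j^{\alpha}$, while finiteness of $\Vert\phi_0\Vert_{B_2}$ forces $\sum_j|a_{j+1}-a_j|^2<\infty$, whence by Cauchy--Schwarz $|a_{2j}-a_j|=o(\sqrt{j}\,)$, contradicting $|a_{2j}-a_j|\gtrsim j^{\alpha}\gg\sqrt{j}$. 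So no interpolation between $B_2$ and $L_\infty$ can produce $B_p$. If instead you take the genuine $\dot B^0_{\infty,\infty}$ (or ${\rm BMO}(\mathbb R)$) as endpoint, the interpolation identity along the critical line is fine, but the endpoint boundedness of $C_h$ is then exactly the hard --- in fact false-in-general --- part: by Jones' theorem, composition preserves ${\rm BMO}(\mathbb R)$ only for strongly quasisymmetric $h$ (i.e. $h'\in A_\infty$), whereas a general quasisymmetric $h$ may be purely singular. This is precisely why the proof the paper cites (\cite{BS}, \cite{B2}) leaves the real line: it realizes $B_p(\mathbb R)$ as the trace of an interpolation space between a function space on $\mathbb H$ and the two-dimensional ${\rm BMO}(\mathbb H)$, where Reimann's theorem does give boundedness of composition by the quasiconformal extension of $h$. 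Your insistence on staying on $\mathbb R$ removes the one ingredient that makes the argument run.

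The range $1<p<2$ is in even worse shape: interpolation only transfers bounds between exponents you already control, and on the line you control only $p=2$, so pairing $p=2$ ``with a base exponent in $(1,2)$'' is circular. The concrete fix you offer --- duality $B_p^{\ast}\cong B_{p'}$ with an adjoint built from $C_{h^{-1}}$ and the weight $h'$ --- contradicts your own correct opening observation: a quasisymmetric homeomorphism need not be absolutely continuous, so $h'$ (or $(h^{-1})'$) need not exist as a locally integrable function and the change-of-variables/adjoint formula is unavailable; indeed, absolute continuity is part of what Theorem \ref{theorem10} is designed to yield as a conclusion, so it cannot be an input here. The converse sketch via tent functions is plausible (critical scaling makes all symmetric tents have comparable $B_p$-norm) but remains a sketch. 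In sum, your proposal establishes the proposition only at $p=2$; for all other $p\in(1,\infty)$ both proposed routes fail, and the half-plane ${\rm BMO}(\mathbb H)$ mechanism of the cited change-of-variable theorems is not replaceable by an argument confined to $\mathbb R$.
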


The proof relies on interpolation of Banach spaces, as elucidated in \cite[Theorem 1.3]{BS} and \cite[Theorem 12]{B2}. 
For $p > 1$, the Besov space ${B}_{p}(\mathbb{R})$ can be regarded as the trace of a certain function space on $\mathbb{H}$, interpolated with the $2$-dimensional BMO function space ${\rm BMO}(\mathbb{H})$. The crucial role of a quasiconformal self-homeomorphism of $\mathbb{H}$ in this argument is that its composition operator
acts boundedly on ${\rm BMO}(\mathbb{H})$. This proof does not extend to $p=1$.

For $p=1$, we can obtain the following claim instead, 
which is enough for characterizing the quasisymmetric homeomorphisms in $T_1$.

\begin{theorem}\label{composition}
Suppose that a quasisymmetric homeomorphism $h$ is locally absolutely continuous and
satisfies $\log h' \in {\dot W}^1_1(\mathbb R)$. Then, the composition operator $C_h$
gives a Banach automorphism of $\widehat B_1(\mathbb R)$ such that
the operator norm $\Vert C_h \Vert$ is bounded by a constant
depending only on $ \Vert \log h' \Vert_{{\dot W}^1_{1}}$ and tending
$1$ as $ \Vert \log h' \Vert_{{\dot W}^1_{1}} \to 0$.
\end{theorem}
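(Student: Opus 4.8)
The plan is to use the characterization $\widehat B_1(\mathbb R)=B_1^{\#}(\mathbb R)\cap\dot W^1_1(\mathbb R)$ from Lemma \ref{W1}, with equivalent norm $\Vert\phi\Vert_{B_1^{\#}}+\Vert\phi\Vert_{\dot W^1_1}$, and to estimate $C_h$ on each factor. Write $g=\log h'$. The $\dot W^1_1$ component is immediate: local absolute continuity of $h$ and $\phi$ gives $(\phi\circ h)'=(\phi'\circ h)\,h'$ almost everywhere, so the change of variables $y=h(x)$ yields the isometry
$$
\Vert C_h\phi\Vert_{\dot W^1_1}=\int_{\mathbb R}|\phi'(h(x))|\,h'(x)\,dx=\Vert\phi'\Vert_{L_1}=\Vert\phi\Vert_{\dot W^1_1}.
$$
The whole difficulty is thus to prove $\Vert C_h\phi\Vert_{B_1^{\#}}\lesssim\Vert\phi\Vert_{\widehat B_1}$ with a constant degenerating to $1$ as $\Vert g\Vert_{\dot W^1_1}=\Vert g'\Vert_{L_1}\to 0$. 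Since $|g(x)-g(y)|\le\Vert g'\Vert_{L_1}$, the ratio $h'(x)/h'(y)$ lies in $[e^{-\Vert g'\Vert_{L_1}},e^{\Vert g'\Vert_{L_1}}]$, so $h'$ is comparable to a constant with constant close to $1$ when $\Vert g'\Vert_{L_1}$ is small.

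For the $B_1^{\#}$ estimate I would set $a=h(x)$, $b=h(x+t)$, $c=h(x+2t)$, $m=(a+c)/2$, $\tau=(c-a)/2$, and use the splitting
$$
\Delta^2_t(\phi\circ h)(x)=\bigl[\phi(m+\tau)-2\phi(m)+\phi(m-\tau)\bigr]-2\int_m^{b}\phi'(w)\,dw,
$$
separating a genuine symmetric second difference of $\phi$ at the transported scale from a correction measuring the convexity defect of $h$. For the first bracket I would perform the two-dimensional change of variables $(x,t)\mapsto(y,\sigma)$ with $y=h(x)$ and $\sigma=(h(x+2t)-h(x))/2$, whose Jacobian is $h'(x)h'(x+2t)$ and for which $\sigma\asymp t\,h'(x)$; the bracket becomes exactly $\Delta^2_\sigma\phi(y)$, and the ratio bound on $h'$ gives $t^{-2}\,dx\,dt\asymp\sigma^{-2}\,dy\,d\sigma$. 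Hence, using the scale invariance of the $B_1^{\#}$ seminorm, the main term is bounded by $\Vert\phi\Vert_{B_1^{\#}}$ with comparability constant tending to $1$ as $\Vert g'\Vert_{L_1}\to 0$.

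The crux is the correction term $\int_{\mathbb R}\int_{\mathbb R}t^{-2}\bigl|\int_m^b\phi'\bigr|\,dx\,dt$, supported on the interval $J(x,t)$ between $m$ and $b$, of length $\tfrac12|\Delta^2_t h(x)|$. The total bending is controlled: since $h''=h'g'$, one has $\Vert\Delta^1_t h'\Vert_{L_1}\le|t|\,\Vert g'\Vert_{L_1}\sup h'$, whence $\int_{\mathbb R}|\Delta^2_t h(x)|\,dx\lesssim\Vert g'\Vert_{L_1}\,t^2$, giving both the correct behaviour as $t\to 0$ and the vanishing as $\Vert g'\Vert_{L_1}\to 0$. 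The delicate point — and what I expect to be the \emph{main obstacle} — is the simultaneous control of this term for small and large $|t|$: a naive Fubini estimate reducing matters to $\sup_w\int\int t^{-2}\mathbf 1_{J(x,t)}(w)\,dx\,dt$ fails, because that kernel behaves like a non-integrable symmetric Hilbert-type average of $g'$ and need not be bounded for merely $g'\in L_1$. The resolution must keep $\phi$ coupled and spend the \emph{full} $\widehat B_1$ norm: splitting the $s$-integration in $\Delta^2_t h(x)=\int_0^t(h'(x+t+s)-h'(x+s))\,ds$ into a near-diagonal part, absorbed by $\Vert\phi\Vert_{\dot W^1_1}$, and a far part, absorbed by $\Vert\phi\Vert_{B_1^{\#}}$ through a genuine second difference of $\phi$ exactly as in the main term.

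Invertibility and the norm asymptotics then follow formally. Since $\log(h^{-1})'=-(\log h')\circ h^{-1}$, the $\dot W^1_1$ isometry gives $\Vert\log(h^{-1})'\Vert_{\dot W^1_1}=\Vert g\Vert_{\dot W^1_1}$, so $h^{-1}$ satisfies the same hypotheses with the same constant, $C_{h^{-1}}$ is bounded on $\widehat B_1(\mathbb R)$ by the same estimate, and $C_hC_{h^{-1}}=C_{h^{-1}}C_h=I$ makes $C_h$ a Banach automorphism. Collecting the constants — the main-term comparability tending to $1$, the correction tending to $0$, and the exact $\dot W^1_1$ isometry — yields $\Vert C_h\Vert\to 1$ as $\Vert\log h'\Vert_{\dot W^1_1}\to 0$.
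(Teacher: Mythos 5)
Your frame (Lemma \ref{W1} plus factor-wise estimates), the exact $\dot W^1_1$ isometry, and the main-term computation are all sound: the change of variables $(x,t)\mapsto(y,\sigma)$ with Jacobian $h'(x)h'(x+2t)$, together with the ratio bound $h'(x)/h'(y)\in[e^{-\Vert g'\Vert_{L_1}},e^{\Vert g'\Vert_{L_1}}]$, does give the bound $e^{C\Vert g'\Vert_{L_1}}\Vert\phi\Vert_{B_1^{\#}}$ for the symmetric second-difference term, and your observation that the naive Fubini bound on the correction term fails is correct (for $h'$ a smoothed jump of size $\epsilon$ at the origin, the kernel $\sup_w\iint t^{-2}\mathbf 1_{J(x,t)}(w)\,dx\,dt$ diverges logarithmically, each dyadic scale of $|x|$ contributing $\asymp\epsilon$). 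But that is where the proof stops being a proof. The correction term $\iint t^{-2}\bigl|\phi(b)-\phi(m)\bigr|\,dx\,dt$ is, as you say yourself, the crux, and what you offer for it is a description of what a resolution ``must'' do, not an argument. Concretely: your proposed splitting decomposes the representation $\Delta^2_t h(x)=\int_0^t(h'(x+t+s)-h'(x+s))\,ds$, hence decomposes the \emph{length} $|b-m|=\tfrac12|\Delta^2_t h(x)|$, but $\phi(b)-\phi(m)$ is a difference of point values and does not decompose along a splitting of that integral; the only way to feed such a splitting into the estimate is through $|\phi(b)-\phi(m)|\le\int_{J(x,t)}|\phi'|$, which is exactly the Fubini route you have just shown to fail. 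No mechanism is given by which the ``far part'' turns into ``a genuine second difference of $\phi$'' --- producing one would require relocating the evaluation points $b,m$ to a symmetric configuration, i.e.\ another correction term of the same type. So the gap is genuine and sits precisely at the hard part of the theorem.

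The paper's proof avoids this difficulty entirely, and you could repair your argument by switching to it: write $B_1^{\#}(\mathbb R)=({\dot W}^2_1(\mathbb R),L_1(\mathbb R))_{\frac12,1}$ as a real interpolation space (\cite[Theorem 17.30]{Leo}), and check by elementary change of variables that $C_h$ is bounded on each endpoint: on $L_1$ with norm $\le M$, and on ${\dot W}^2_1$ with norm $\le M(1+\Vert\log h'\Vert_{{\dot W}^1_1})$, using $(\phi\circ h)''=\phi''(h)(h')^2+\phi'(h)h''$, the identity $h''=h'(\log h')'$ giving $\Vert h\Vert_{{\dot W}^2_1}\le M\Vert\log h'\Vert_{{\dot W}^1_1}$, and the one-dimensional inequality $\Vert\phi'\Vert_{L_\infty}\le\Vert\phi''\Vert_{L_1}$. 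Proposition \ref{interpolation} then bounds $\Vert C_h|_{B_1^{\#}}\Vert\le\Vert C_h|_{{\dot W}^2_1}\Vert^{1/2}\Vert C_h|_{L_1}\Vert^{1/2}$, with all constants tending to $1$ as $\Vert\log h'\Vert_{{\dot W}^1_1}\to0$; combined with your ${\dot W}^1_1$ isometry and your (correct) inversion argument via $\log(h^{-1})'=-(\log h')\circ h^{-1}$, this finishes the theorem. Your direct second-difference approach, if someone completed the correction-term estimate, would have the merit of being self-contained and of exhibiting the near-isometry geometrically; but as it stands the interpolation route is the one that actually closes.
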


The proof of Theorem \ref{composition} utilizes the real interpolation of 
$B_1^{\#}(\mathbb{R})$ by ${\dot W}^2_1(\mathbb{R})$ and $L_1(\mathbb{R})$ as defined below
with its property given in the next proposition. See \cite[Chap.16]{Leo}
for interpolation of Banach spaces.
For each factor of this real interpolation of $B_1^{\#}(\mathbb{R})$, 
we will show that $C_h$ is a bounded linear operator 
to itself.
From this, we see that $C_h$ is also a bounded linear operator on $B_1^{\#}(\mathbb R)$.

\begin{definition}
Let $(X_0, \Vert \cdot \Vert_{X_0})$ and $(X_1, \Vert \cdot \Vert_{X_1})$ 
be Banach spaces that are continuously embedded into a common topological vector space,
which we call an admissible pair of Banach spaces. For $t>0$, 
$$
K(x,t)=\inf\,\{\Vert x_0 \Vert_{X_0}+t\Vert x_1 \Vert_{X_1} \mid x=x_0+x_1,\ x_0 \in X_0,\ x_1 \in X_1\} 
$$
is defined as the norm of $x \in X_0+X_1$. Then, for $\sigma \in (0,1)$ and $q \geq 1$, the set
$$
(X_0,X_1)_{\sigma,q}=
\{x \in X_0 +X_1 \mid \Vert x \Vert_{\sigma,q}=\left( \int_0^\infty K(x,t)^q 
\frac{dt}{t^{1+\sigma q}} \right)^{\frac{1}{q}}<\infty\}
$$ 
is defined to be the {\it real interpolation} of $X_0$ and $X_1$.
\end{definition}

The real interpolation space $(X_0,X_1)_{\sigma,q}$ is a Banach space with the norm $\Vert \cdot \Vert_{\sigma,q}$
(see \cite[Theorem 16.5]{Leo}). 
The boundedness of the associated operator with this interpolation is stated as follows (see \cite[Theorem 16.12]{Leo}).

\begin{proposition}\label{interpolation}
Let $(X_0, \Vert \cdot \Vert_{X_0})$ and $(X_1, \Vert \cdot \Vert_{X_1})$ 
be an admissible pair of Banach spaces. Let $C:X_0+X_1 \to X_0+X_1$ be a linear transformation
such that its restrictions $C|_{X_0}$ and $C|_{X_1}$ are bounded linear transformations
of $X_0$ and $X_1$, respectively. Then, for any $\sigma \in (0,1)$ and $q \geq 1$,
$C|_{(X_0,X_1)_{\sigma,q}}$ is a bounded linear transformation of
the real interpolation $(X_0,X_1)_{\sigma,q}$ and its operator norm satisfies
$$
\Vert C|_{(X_0,X_1)_{\sigma,q}} \Vert \leq \Vert C|_{X_0} \Vert^{1-\sigma} \Vert C|_{X_1} \Vert^\sigma.
$$
\end{proposition}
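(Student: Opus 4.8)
The plan is to derive everything from the definition of the $K$-functional together with its homogeneity in the scaling variable $t$; the operator-norm bound will then drop out of a single change of variables. Write $M_0=\Vert C|_{X_0}\Vert$ and $M_1=\Vert C|_{X_1}\Vert$, and treat first the generic case $M_0,M_1>0$.

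First I would record how $C$ interacts with decompositions. Since $C$ is linear on $X_0+X_1$, every splitting $x=x_0+x_1$ with $x_0\in X_0$ and $x_1\in X_1$ gives $Cx=Cx_0+Cx_1$ with $Cx_0\in X_0$, $Cx_1\in X_1$, and the hypotheses yield $\Vert Cx_0\Vert_{X_0}\le M_0\Vert x_0\Vert_{X_0}$ and $\Vert Cx_1\Vert_{X_1}\le M_1\Vert x_1\Vert_{X_1}$. Using this particular splitting of $Cx$ as a competitor in the infimum defining $K(Cx,t)$, then factoring out $M_0$ and taking the infimum over all splittings $x=x_0+x_1$, produces the key scaling inequality
$$
K(Cx,t)\le M_0\,K\!\left(x,\tfrac{M_1}{M_0}\,t\right).
$$

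With this inequality in hand the estimate is a direct computation. I would substitute it into $\Vert Cx\Vert_{\sigma,q}^q=\int_0^\infty K(Cx,t)^q\,t^{-1-\sigma q}\,dt$, pull out the factor $M_0^q$, and change variables $s=(M_1/M_0)t$. The Jacobian combines with $t^{-1-\sigma q}$ to contribute precisely $(M_1/M_0)^{\sigma q}$, so that the integral becomes $M_0^q(M_1/M_0)^{\sigma q}\Vert x\Vert_{\sigma,q}^q=M_0^{(1-\sigma)q}M_1^{\sigma q}\Vert x\Vert_{\sigma,q}^q$; taking $q$-th roots gives $\Vert Cx\Vert_{\sigma,q}\le M_0^{1-\sigma}M_1^{\sigma}\Vert x\Vert_{\sigma,q}$. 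In particular $Cx$ has finite interpolation norm, so $C$ maps $(X_0,X_1)_{\sigma,q}$ into itself, and the operator-norm bound is exactly the assertion.

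The only point requiring separate care is the degenerate case, where the substitution $s=(M_1/M_0)t$ is unavailable, and this is where I expect the (minor) obstacle to lie. If $M_0=0$, then $\Vert Cx_0\Vert_{X_0}=0\le\varepsilon\Vert x_0\Vert_{X_0}$ for every $\varepsilon>0$, so the generic computation applies verbatim with $M_0$ replaced by $\varepsilon$, giving $\Vert Cx\Vert_{\sigma,q}\le \varepsilon^{1-\sigma}M_1^{\sigma}\Vert x\Vert_{\sigma,q}$; letting $\varepsilon\to 0^{+}$ and using $1-\sigma>0$ forces $\Vert Cx\Vert_{\sigma,q}=0$, which is the claimed bound since $M_0^{1-\sigma}M_1^{\sigma}=0$. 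The case $M_1=0$ is symmetric, using $\sigma>0$ instead. Apart from this limiting adjustment, the entire content of the proof is the homogeneity relation for $K$.
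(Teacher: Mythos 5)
Your proof is correct and complete: the homogeneity inequality $K(Cx,t)\le \Vert C|_{X_0}\Vert\, K\bigl(x,\tfrac{\Vert C|_{X_1}\Vert}{\Vert C|_{X_0}\Vert}t\bigr)$ followed by the change of variables in the $K$-functional integral is exactly the standard argument, and your limiting treatment of the degenerate cases $\Vert C|_{X_0}\Vert=0$ or $\Vert C|_{X_1}\Vert=0$ is a legitimate (often omitted) refinement. Note that the paper itself does not prove this proposition at all — it quotes it as a known result, citing Leoni's \emph{A First Course in Sobolev Spaces} (Theorem 16.12) — and your argument is precisely the classical proof of that cited theorem, so there is no divergence in method to report.
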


\begin{proof}[Proof of Theorem \ref{composition}]
Since $\widehat B_1(\mathbb R)=B_1^{\#}(\mathbb R) \cap {\dot W}^1_1(\mathbb R)$ by Lemma \ref{W1},
we first consider $B_1^{\#}(\mathbb R)$ by representing it as the real interpolation
$$
B_1^{\#}(\mathbb R)=({\dot W}^2_1(\mathbb R), L_1(\mathbb R))_{\frac{1}{2},1},
$$
where ${\dot W}^2_1(\mathbb R)$ is the homogeneous Sobolev space with norm
$\Vert \phi \Vert_{{\dot W}^2_1}=\Vert \phi'' \Vert_{L_1}$ (see \cite[Theorem 17.30]{Leo}).
Then, we take the intersection with ${\dot W}^1_1(\mathbb R)$ for recovering $\widehat B_1(\mathbb R)$.

The specificity of one-dimensional spaces is applied here: 
if $\log h' \in {\dot W}^1_{1}(\mathbb R)$, then $\log h'$ is bounded and continuous, and so are $h'$ and $(h')^{-1}$.
By the normalization $h(0)=0$ and $h(1)=1$, there is some $\xi \in [0,1]$ such that $h'(\xi)=1$.
This implies that $\Vert \log h' \Vert_{L_\infty} \leq \Vert \log h' \Vert_{{\dot W}^1_{1}}$.
From this, we have $1/M \leq h' (x) \leq M$ for some constant $M \geq 1$ depending only on $ \Vert \log h' \Vert_{{\dot W}^1_{1}}$ and tending $1$ as $ \Vert \log h' \Vert_{{\dot W}^1_{1}} \to 0$.
Moreover, by $(\log h')'=h''/h'$, we have 
$h \in {\dot W}^2_{1}(\mathbb R)$ with
$\Vert h \Vert_{{\dot W}^2_{1}} \leq M \Vert \log h' \Vert_{{\dot W}^1_{1}}$.

From these conditions, we see that
$C_h$ is bounded linear operators of $L_1(\mathbb R)$,
${\dot W}^1_1(\mathbb R)$, and ${\dot W}^2_1(\mathbb R)$ onto themselves:
\begin{align}
\int_{\mathbb R} |\phi \circ h(x)|dx&=\int_{\mathbb R} |\phi(h(x))|(h'(x))^{-1}h'(x)dx \leq M \int_{\mathbb R} |\phi (t)|dt;\\
\int_{\mathbb R} |(\phi \circ h)'(x)|dx&=\int_{\mathbb R} |\phi' (h(x))|h'(x)dx=\int_{\mathbb R} |\phi' (t)|dt;\\
\end{align}
\vspace{-1cm}
\begin{align}
\int_{\mathbb R} |(\phi \circ h)''(x)|dx&\leq\int_{\mathbb R} |\phi'' (h(x))|h'(x)^2dx
+\int_{\mathbb R} |\phi' (h(x))||h''(x)|dx\\
&\leq M \int_{\mathbb R} |\phi'' (t)|dt+M \Vert \log h' \Vert_{{\dot W}^1_{1}}
\Vert \phi' \Vert_{L_\infty}\\
&\leq M (1+ \Vert \log h' \Vert_{{\dot W}^1_{1}})\int_{\mathbb R} |\phi'' (t)|dt.
\end{align}
Here, in the last inequality above, we have used the fact that if $\phi \in {\dot W}^2_{1}(\mathbb R)$ then
$\phi' \in L_\infty(\mathbb R)$ and $\Vert \phi' \Vert_{L_\infty} \leq \Vert \phi \Vert_{{\dot W}^2_{1}}=\int_{\mathbb R} |\phi'' (t)|dt$
under assumption $\phi'(0)=0$.

In the above argument, we also have shown that the operator norm of 
the composition operator $C_h$ on $L_1(\mathbb R)$,
${\dot W}^1_1(\mathbb R)$, and ${\dot W}^2_1(\mathbb R)$ are all bounded 
in terms of $\Vert \log h' \Vert_{{\dot W}^1_{1}}$.
Therefore, by Proposition \ref{interpolation}, we obtain the required statement.
\end{proof}

\section{Conformal welding}\label{6}

In general, a quasisymmetric homeomorphism $h:\mathbb R \to \mathbb R$ can be expressed as the discrepancy between the boundary values 
$f_1=F_1|_{\mathbb R}$ and $f_2=F_2|_{\mathbb R}$ of two conformal homeomorphisms $F_1:\mathbb H^- \to \Omega^-$ and 
$F_2:\mathbb H^+ \to \Omega^+$, where $\Omega^-$ and $\Omega^+$ are complementary domains in $\mathbb C$ with
$\partial \Omega^-=\partial \Omega^+$. This expression $h=f_2^{-1} \circ f_1$ is called {\it conformal welding}.
If absolute continuity is present, then
$\log f_2' \circ h + \log h' = \log f_1'$
is satisfied.
This allows us to see that the property of $\log h'$ is determined by that of $\log f_1'$ and $\log f_2'$
if the composition operator $C_h$ preserves it.

We proceed to the problem of finding the class of quasisymmetric homeomorphisms $h$
whose quasiconformal extensions $H(\mu)$ to $\mathbb H$ have complex dilatations 
$\mu \in M_p(\mathbb H)$. First, we consider the boundary extension  
of a conformal homeomorphism of $\mathbb H$.

\begin{lemma}\label{app}
For $\mu \in M_p(\mathbb H^+)$, let $F^\mu$ be the normalized quasiconformal self-homeo\-morphism of $\mathbb C$
that is conformal on $\mathbb H^-$ and has the complex dilatation $\mu$ on $\mathbb H^+$. Then,
$f=F^\mu|_{\mathbb R}$ is locally absolutely continuous, $f'(x) \neq 0$ almost everywhere on $\mathbb R$, and
$\log f'$ belongs to $\widehat{B}_p(\mathbb R)$.
\end{lemma}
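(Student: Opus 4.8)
The plan is to reduce the lemma entirely to the two analytic facts already in hand and then to identify the boundary derivative of $f$ with the trace of $\log F'$. Write $F=F^\mu|_{\mathbb H^-}$. Since $F$ is conformal, $F'$ is a nonvanishing holomorphic function on the simply connected domain $\mathbb H^-$, so a single-valued branch $\Phi:=\log F'$ is well defined. By Lemma \ref{conformal}, the hypothesis $\mu\in M_p(\mathbb H^+)$ gives $\Phi\in\widehat{\mathcal B}_p(\mathbb H^-)$, and by Proposition \ref{trace} this $\Phi$ admits non-tangential limits $\phi$ almost everywhere on $\mathbb R$ with $\phi\in\widehat B_p(\mathbb R)$. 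Once I show that $f=F|_{\mathbb R}$ is locally absolutely continuous with $f'=e^{\phi}$ almost everywhere, everything follows at once: $f'\neq 0$ almost everywhere because $e^{\phi}$ never vanishes, and $\log f'=\phi\in\widehat B_p(\mathbb R)$. Note that this covers $p\geq 1$ uniformly, since both cited results already incorporate the delicate case $p=1$.

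The central step is therefore to prove that $f$ is locally absolutely continuous and that its derivative agrees almost everywhere with the non-tangential boundary value of $F'=e^{\Phi}$. I would argue by approximation along the horizontal lines $\mathbb R-i\varepsilon$. For $a<b$ and $\varepsilon>0$, applying the fundamental theorem of calculus to the smooth curve $x\mapsto F(x-i\varepsilon)$ yields
$$
F(b-i\varepsilon)-F(a-i\varepsilon)=\int_a^b F'(x-i\varepsilon)\,dx=\int_a^b e^{\Phi(x-i\varepsilon)}\,dx .
$$
As $\varepsilon\to 0^+$, the left-hand side converges to $f(b)-f(a)$ because $F^\mu$ is a homeomorphism of $\mathbb C$ and hence continuous up to $\mathbb R$. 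If the family $\{e^{\Phi(\cdot-i\varepsilon)}\}_{\varepsilon>0}$ converges to $e^{\phi}$ in $L_1$ on every compact interval, then the right-hand side converges to $\int_a^b e^{\phi(x)}\,dx$, giving $f(b)-f(a)=\int_a^b e^{\phi(x)}\,dx$ for all $a<b$; this is exactly the local absolute continuity of $f$ together with $f'=e^{\phi}$ almost everywhere.

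The main obstacle is precisely this local $L_1$-convergence of the boundary values of $F'$, equivalently the local uniform integrability of the family $F'(\cdot-i\varepsilon)=e^{\Phi(\cdot-i\varepsilon)}$. Membership $\Phi\in\widehat{\mathcal B}_p(\mathbb H^-)\subset{\rm BMOA}(\mathbb H^-)$ by itself is not sufficient, since $\log|F'|\in{\rm BMO}$ does not force $e^{\log|F'|}$ to be locally integrable on the boundary. The additional input I would invoke is the chord-arc structure coming from the BMO Teich\-m\"ul\-ler theory: because $\mu\in M_p(\mathbb H^+)\subset M_B(\mathbb H^+)$ and $F^\mu$ is quasiconformal on $\mathbb C$, the image $\Omega^-=F(\mathbb H^-)$ is a chord-arc domain (see \cite{AZ, SWei}). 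For a conformal map onto a chord-arc domain the boundary extension is locally absolutely continuous, $|f'|$ is an $A_\infty$-weight, $f'\neq 0$ almost everywhere, and the non-tangential limit of $F'$ equals $f'$ almost everywhere. The reverse H\"older inequality associated with the $A_\infty$-property supplies the required local uniform integrability, so the limit argument above goes through, the identification $\log f'=\phi$ holds, and the lemma follows.
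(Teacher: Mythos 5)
Your skeleton is the same as the paper's: reduce everything to Lemma \ref{conformal} (giving $\Phi=\log F'\in\widehat{\mathcal B}_p(\mathbb H^-)$), Proposition \ref{trace} (giving the boundary trace $\phi\in\widehat B_p(\mathbb R)$), and then the boundary behavior of conformal maps onto chord-arc domains to identify $\log f'$ with $\phi$. The gap is in how you obtain the chord-arc property, which you yourself flag as the crucial input. You assert that $\Omega^-=F(\mathbb H^-)$ is chord-arc ``because $\mu\in M_p(\mathbb H^+)\subset M_B(\mathbb H^+)$'', citing \cite{AZ,SWei}. Those references only give the equivalence of the Carleson measure condition with $\log F'\in{\rm BMOA}(\mathbb H^-)$; they do not give chord-arc, and the implication you need is false. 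Indeed, your own (correct) observation earlier in the proof already refutes it: John--Nirenberg makes $e^{\lambda u}$ locally integrable only for small $\lambda$, so a BMOA pre-Schwarzian does not even force local rectifiability of the image curve, whereas a chord-arc image would give $|f'|\in A_\infty$ and hence local $L^{1+\epsilon}$ integrability. This is also why the paper, in Section \ref{9}, must introduce the chord-arc locus $\widetilde T_C$ as a (proper) open subset of $T_B^+\times T_B^-$ rather than all of it. So deducing chord-arc from membership in $M_B$ is a genuine error sitting exactly at the step on which your whole limit argument depends.

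The fix is that the chord-arc property must come from the $p$-integrability of $\mu$, not from the Carleson condition: this is \cite[Lemma 3.2]{WM-4} (equivalently, the inclusion $T_p^+\times T_p^-\subset\widetilde T_C$ recorded in Section \ref{9}), and it is precisely what the paper cites at this point. Once you substitute that justification, your argument closes up --- and in fact collapses to the paper's proof, because the cited chord-arc fact already contains everything you were trying to re-derive: $f$ is locally absolutely continuous and $f'$ coincides a.e.\ with the angular derivative of $F^\mu$, which is nonzero a.e. Hence the horizontal-line approximation via the fundamental theorem of calculus and the reverse H\"older/uniform integrability step become redundant, and the conclusion $\log f'=\phi\in\widehat B_p(\mathbb R)$ follows immediately.
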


\begin{proof}
By Lemma \ref{conformal}, we see that $\Phi=\log (F^\mu|_{\mathbb H^-})'$ belongs to 
$\widehat{\mathcal B}_p(\mathbb H^-)$. Moreover, by Proposition \ref{trace},
the boundary extension $\phi=E(\Phi)$ is in $\widehat{B}_p(\mathbb R)$, and in particular,
$\phi(x)$ is finite almost everywhere on $\mathbb R$. Since $f(\mathbb R)$ is a chord-arc curve,
it is well-known that $f$ is locally absolutely continuous and $f'(x)$ coincides with the angular
derivative of $F^\mu$ at $x$ almost everywhere on $\mathbb R$ (see \cite[Lemma 3.2]{WM-4}), which is non-zero
almost everywhere. 
Hence, $\log f'=\phi$. This completes the proof.
\end{proof}

Representing a quasisymmetric homeomorphism $h=H(\mu)|_{\mathbb R}$ with $\mu \in M_p(\mathbb H)$
by conformal welding, we obtain the
following characterization of it in one direction. The other direction will be shown later.
The boundedness of the composition operator $C_h$ is crucial in this proof.

\begin{theorem}\label{main1}
Let $h:\mathbb R \to \mathbb R$ be a quasisymmetric homeomorphism whose 
quasiconformal extension $H=H(\mu):\mathbb H \to \mathbb H$ has a complex dilatation $\mu$
belonging to $M_p(\mathbb H)$ for $p \geq 1$. Then, $h$ is locally absolutely continuous and 
$\phi=\log h'$ belongs to $\widehat B_p(\mathbb R)$.
\end{theorem}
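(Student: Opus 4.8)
The plan is to realize $h$ via conformal welding and then transfer the regularity of the two conformal factors through the composition operator. Since $\mu \in M_p(\mathbb H)$, the quasisymmetric homeomorphism $h=H(\mu)|_{\mathbb R}$ lies in $T_p$, and by the general theory of simultaneous uniformization $h$ can be written as $h=f_2^{-1}\circ f_1$, where $f_1=F_1|_{\mathbb R}$ and $f_2=F_2|_{\mathbb R}$ are the boundary values of conformal homeomorphisms $F_1$ on $\mathbb H^-$ and $F_2$ on $\mathbb H^+$ onto the two complementary domains cut out by a common quasicircle. The point of splitting this way is that each $F_i$ is the conformal factor of a global quasiconformal map whose Beltrami coefficient lies in $M_p$ of the appropriate half-plane.

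First I would invoke Lemma \ref{app} to conclude that each $f_i$ is locally absolutely continuous with $\log f_i' \in \widehat B_p(\mathbb R)$; for $f_1$ this is immediate, and for $f_2$ one applies the same lemma after conjugating by the orientation-reversing reflection (or equivalently using the analogous statement for $M_p(\mathbb H^-)$), so that both $\log f_1'$ and $\log f_2'$ belong to $\widehat B_p(\mathbb R)$. Since $h$ is the composition of locally absolutely continuous homeomorphisms with nonvanishing derivatives almost everywhere, $h$ itself is locally absolutely continuous, and the chain rule yields the welding identity
$$
\log h' = (\log f_1') - (\log f_2')\circ h = \log f_1' - C_h(\log f_2')
$$
almost everywhere on $\mathbb R$.

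The decisive step is to control the right-hand side in the $\widehat B_p$-norm. The term $\log f_1'$ is already in $\widehat B_p(\mathbb R)$, so everything reduces to showing $C_h(\log f_2')\in\widehat B_p(\mathbb R)$, i.e.\ that the composition operator $C_h$ preserves $\widehat B_p(\mathbb R)$. For $p>1$ this follows from Proposition \ref{p>1} together with the equivalence $\Vert\cdot\Vert_{B_p}\asymp\Vert\cdot\Vert_{\widehat B_p}$, since the doubling constant of $h$ is controlled by the quasisymmetry constant. For $p=1$ I would use Theorem \ref{composition}: because $h\in T_1$ and hence $\log h'\in{\dot W}^1_1(\mathbb R)$ (this being exactly the regularity recorded in Lemma \ref{W1}), the composition operator $C_h$ is a Banach automorphism of $\widehat B_1(\mathbb R)$. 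I expect the main obstacle to lie precisely here in the $p=1$ case: verifying that $h$ satisfies the hypotheses of Theorem \ref{composition} requires knowing $\log h'\in{\dot W}^1_1$, which is part of what we are trying to establish, so the argument must be organized to avoid circularity—either by bootstrapping from the analytic Besov regularity of the $F_i$ on the half-planes and extracting the $\dot W^1_1$ bound for $h$ directly from the welding identity, or by treating the $\dot W^1_1$ component separately from the $B_1^\#$ component.

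Once $C_h$ is known to be bounded on $\widehat B_p(\mathbb R)$, the conclusion is immediate: $\log h'=\log f_1'-C_h(\log f_2')$ is a difference of two elements of $\widehat B_p(\mathbb R)$, hence lies in $\widehat B_p(\mathbb R)$, with norm bounded in terms of $\Vert\log f_1'\Vert_{\widehat B_p}$, $\Vert C_h\Vert$, and $\Vert\log f_2'\Vert_{\widehat B_p}$. This establishes both the local absolute continuity of $h$ and the membership $\log h'\in\widehat B_p(\mathbb R)$ claimed in the theorem.
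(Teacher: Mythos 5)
Your proposal follows the paper's proof exactly in outline --- conformal welding $f_2 \circ h = f_1$, Lemma \ref{app} applied to both conformal factors, the welding identity $\log h' = \log f_1' - C_h(\log f_2')$, and boundedness of the composition operator --- and your $p>1$ case (Proposition \ref{p>1} together with $\Vert\cdot\Vert_{B_p}\asymp\Vert\cdot\Vert_{\widehat B_p}$) is complete. However, for $p=1$ you have only named the difficulty, not resolved it: you correctly observe that invoking Theorem \ref{composition} requires the hypothesis $\log h' \in \dot W^1_1(\mathbb R)$, which is part of what is being proved, and you then offer two unexecuted suggestions for how the circularity ``must'' be avoidable. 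As written this is a genuine gap, and it sits precisely at the new content of the theorem, since the $p=1$ case is what distinguishes this result from the earlier literature.

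The fix --- which is the paper's actual argument, and is your first suggestion carried out --- is a short computation. Since $h$ is an increasing locally absolutely continuous homeomorphism (which you already have from $f_2' \neq 0$ a.e.\ and $f_2\circ h=f_1$), the chain rule and the change of variables $t=h(x)$ give the exact identity
\begin{equation}
\int_{\mathbb R} \bigl|(\log f_2' \circ h)'(x)\bigr|\,dx
=\int_{\mathbb R} \bigl|(\log f_2')'(h(x))\bigr|\,h'(x)\,dx
=\int_{\mathbb R} \bigl|(\log f_2')'(t)\bigr|\,dt,
\end{equation}
which is finite because $\log f_2' \in \widehat B_1(\mathbb R) \subset \dot W^1_1(\mathbb R)$ by Lemma \ref{W1}. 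The point you missed is that no boundedness of $C_h$ on any Besov-type space is needed for this step: the $\dot W^1_1$ seminorm is \emph{exactly} preserved under composition with a monotone locally absolutely continuous homeomorphism. Since $\log f_1' \in \dot W^1_1(\mathbb R)$ as well, the welding identity already yields $\log h' = \log f_1' - \log f_2'\circ h \in \dot W^1_1(\mathbb R)$, and only at this point does one invoke Theorem \ref{composition} to conclude that $C_h$ is a Banach automorphism of $\widehat B_1(\mathbb R)$, whence $C_h(\log f_2') \in \widehat B_1(\mathbb R)$ and, by the welding identity once more, $\log h' \in \widehat B_1(\mathbb R)$. A secondary point you gloss over: the existence of the welding with \emph{both} conformal factors having $p$-integrable dilatations is not automatic ``general theory''; the paper takes $F_2 = F_{\bar\mu^{-1}}$, where $\mu^{-1}$ is the dilatation of $H(\mu)^{-1}$, and $\mu^{-1}$ lies in $M_p(\mathbb H)$ only after choosing a suitable representative of the Teich\-m\"ul\-ler class $[\mu]$ (see \cite[Lemma 3.1]{WM-*}, as cited in the paper's proof).
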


\begin{proof}
We represent $h$ by conformal welding. Let $F_1=F^\mu:\mathbb C \to \mathbb C$ be the normalized quasiconformal homeomorphism
that is conformal on $\mathbb H^-$ and has the complex dilatation $\mu$ on $\mathbb H^+$.
Let $F_2=F_{\bar \mu^{-1}}:\mathbb C \to \mathbb C$ be the normalized quasiconformal homeomorphism
that is conformal on $\mathbb H^+$ and has the complex dilatation $\bar \mu^{-1}$ on $\mathbb H^-$.
Here, $\mu^{-1}$ is the complex dilatation of the quasiconformal homeomorphism $H({\mu^{-1}}):\mathbb H^+ \to \mathbb H^+$
satisfying $(H(\mu))^{-1}=H(\mu^{-1})$, and $\bar \mu^{-1}$ stands for its reflection defined on $\mathbb H^-$.
By choosing a suitable representative $\mu$ in the Teich\-m\"ul\-ler equivalence class $[\mu]$, 
we see that $\mu^{-1}$ also belongs to $M_p(\mathbb H)$ (see \cite[Lemma 3.1]{WM-*}).
We note that the normalization of $F_1$ and $F_2$ guarantees $F_1(\mathbb R)=F_2(\mathbb R)$.

Let $f_1=F_1|_{\mathbb R}$ and $f_2=F_2|_{\mathbb R}$. Then, we have that $f_2 \circ h=f_1$. 
Here, Lemma \ref{app} asserts that $f_1$ and $f_2$ are locally absolutely continuous, 
and $\log f_1'$ and $\log f_2'$ belong to $\widehat{B}_p(\mathbb R)$. Moreover,
$f_2'(x) \neq 0\ {\rm (a.e.)}$ implies that $h$ is also locally absolutely continuous.
Then, taking the logarithm and the derivatives for the conformal welding $f_2 \circ h=f_1$, we have
\begin{equation}\label{welding}
\log f_2' \circ h+\log h'=\log f_1'.
\end{equation}

In the case $p=1$, 
we see that $\log f_2' \circ h \in \dot W_1^1(\mathbb R)$ because
$$
\int_{\mathbb R}|(\log f_2' \circ h)'(x)|dx=\int_{\mathbb R}|(\log f_2')'(h(x))|h'(x)dx=\int_{\mathbb R}|(\log f_2')'(t)|dt
$$
is finite by $\log f_2' \in \widehat B_1(\mathbb R) \subset \dot W_1^1(\mathbb R)$.
Since $\log f_1'$ also belongs to $\widehat B_1(\mathbb R) \subset {\dot W}^1_{1}(\mathbb R)$, we have
$\log h' \in {\dot W}^1_{1}(\mathbb R)$ by \eqref{welding}.
This is the assumption of Theorem \ref{composition}, so we can apply it to conclude that the composition operator
$C_h$ is bounded on $\widehat B_1(\mathbb R)$. This implies that $C_h(\log f_2')=\log f_2' \circ h \in \widehat B_1(\mathbb R)$.
Hence, we obtain $\log h' \in \widehat B_1(\mathbb R)$ again by \eqref{welding}.

In the case of $p>1$, Proposition \ref{p>1} claims that the composition operator $C_h$ is bounded on 
$\widehat B_p(\mathbb R)$. It follows from \eqref{welding} that $\log h' \in \widehat B_p(\mathbb R)$ 
in the same manner as above.
\end{proof}

For the case $p=1$, Theorem \ref{main1} in particular implies that $\log h'$ belongs to
$$
\widehat B_1(\mathbb R) \subset {\dot W}_1^1(\mathbb R) \subset C^0(\mathbb R) \cap L_\infty(\mathbb R).
$$
From this, the following result follows.

\begin{corollary}
Every quasisymmetric homeomorphism $h$ in $T_1$ is a $C^1$-diffeomorphism of $\widehat{\mathbb R} \cong \mathbb S$ 
(including $\infty$).
\end{corollary}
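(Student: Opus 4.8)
The plan is to read the conclusion off the chain of inclusions recorded just before the corollary, $\widehat B_1(\mathbb R) \subset {\dot W}_1^1(\mathbb R) \subset C^0(\mathbb R) \cap L_\infty(\mathbb R)$, and then to upgrade the resulting regularity on $\mathbb R$ to $C^1$-regularity on the compactification $\widehat{\mathbb R} \cong \mathbb S$. By Theorem \ref{main1} applied with $p=1$, for $h \in T_1$ we have $\log h' \in \widehat B_1(\mathbb R)$, so $\log h'$ coincides almost everywhere with a bounded continuous function on $\mathbb R$; since it is real-valued and finite, $h' = e^{\log h'}$ is continuous and strictly positive on $\mathbb R$. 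Together with the local absolute continuity already established, this shows that $h$ is a $C^1$-diffeomorphism of $\mathbb R$ onto $\mathbb R$.

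The only remaining point is the behaviour at $\infty$. The one-dimensional Sobolev embedding only guarantees that $\log h'$ has (possibly distinct) finite limits as $x \to +\infty$ and $x \to -\infty$, which by itself does not control the fixed point $\infty$. To handle this I would transfer the problem to the \emph{compact} circle $\mathbb S$ via the Cayley transformation $K$. Writing $h_* = K \circ h \circ K^{-1}$ for the induced normalized quasisymmetric self-homeomorphism of $\mathbb S$, which is orientation-preserving so that its angular derivative is positive and $\log h_*'$ is real-valued, it suffices to prove that $\log h_*'$ is continuous on all of $\mathbb S$, in particular at the wrap-around point $K(\infty)=1$.

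To this end I would establish the circle analogue $\log h_*' \in \widehat B_1(\mathbb S)$. The cleanest route is to rerun the conformal welding argument of Theorem \ref{main1} in the disk model: the quasiconformal extension $H(\mu)$ of $h$ with $\mu \in M_1(\mathbb H)$ corresponds under $K$ to a quasiconformal self-homeomorphism of $\mathbb D$ whose complex dilatation again lies in the $1$-integrable class, because $K$ is a hyperbolic isometry, the area element $dxdy/({\rm Im}\,z)^2$ is conformally invariant, and $|\mu|$ is preserved under conjugation by a conformal map. Applying the disk versions of Lemma \ref{conformal}, Lemma \ref{app}, and Theorem \ref{composition}, all available here through Proposition \ref{boundary}, Lemma \ref{W1}, and Proposition \ref{isomorphism2}, then yields $\log h_*' \in \widehat B_1(\mathbb S)$. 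On the circle one has $\widehat B_1(\mathbb S) = B_1^{\#}(\mathbb S) \subset {\dot W}_1^1(\mathbb S)$ by Lemma \ref{W1}, and the one-dimensional Sobolev embedding on the compact manifold $\mathbb S$ gives ${\dot W}_1^1(\mathbb S) \subset C^0(\mathbb S)$. Hence $\log h_*'$ is genuinely continuous everywhere on $\mathbb S$, so $h_*' = e^{\log h_*'}$ is continuous and strictly positive at every point, including $1 = K(\infty)$, and therefore $h_*$, and with it $h$, is a $C^1$-diffeomorphism of $\widehat{\mathbb R} \cong \mathbb S$.

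The main obstacle is exactly this last step, the regularity at $\infty$: on $\mathbb R$ membership in ${\dot W}_1^1$ permits mismatched limits at $\pm\infty$, so continuity at the single boundary point must be forced by passing to the compact model, where global continuity is automatic. The one subtlety to check is that the Cayley correction terms entering $\log h_*'$ (namely $\log K' \circ h - \log K'$ transported by $K^{-1}$) do not spoil membership in $\widehat B_1(\mathbb S)$; this is why I would obtain $\log h_*' \in \widehat B_1(\mathbb S)$ intrinsically through the disk theory rather than by pushing forward $\log h'$ and tracking those correction terms by hand.
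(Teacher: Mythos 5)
Your first step (continuity and positivity of $h'$ on $\mathbb R$ via Theorem \ref{main1} and the chain $\widehat B_1(\mathbb R)\subset \dot W^1_1(\mathbb R)\subset C^0(\mathbb R)\cap L_\infty(\mathbb R)$) is correct, and you rightly identify that the whole content of the corollary beyond this is the point at $\infty$. The gap is in how you close that point. You claim the disk versions of Lemma \ref{conformal} and Lemma \ref{app} are ``available here through Proposition \ref{boundary}, Lemma \ref{W1}, and Proposition \ref{isomorphism2}.'' They are not: those results concern the push-forward $K_*\phi=\phi\circ K^{-1}$ of \emph{functions}, whereas pre-Schwarzians do not transform as functions under Cayley conjugation. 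If $G=K\circ F\circ K^{-1}$, then $\log G'=K_*\bigl(\log F'+\log K'\circ F-\log K'\bigr)$, and for $p=1$ the M\"obius cocycle is exactly borderline: $\log (K^{-1})'(w)=\mathrm{const}-2\log(1-w)$, and $\Vert \log(1-w)\Vert_{\mathcal B_1^{\#}(\mathbb D)}=\int_{\mathbb D}|1-w|^{-2}\,du\,dv=\infty$, so the cocycle term alone lies outside $\mathcal B_1^{\#}(\mathbb D)$. Thus the disk analogues of Lemma \ref{conformal} and Lemma \ref{app} cannot be obtained by formally transporting the half-plane statements; proving that the \emph{combination} $\log F'+\log K'\circ F-\log K'$ lies in the space amounts to controlling precisely the behaviour at $\infty$ that your detour was meant to settle. (By contrast, your transfer of Theorem \ref{composition} is formally fine, since $C_{h_*}=K_*\circ C_h\circ K_*^{-1}$.) So as written, the key step $\log h_*'\in\widehat B_1(\mathbb S)$ is assumed rather than proved.

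The repair is simpler than re-running the welding in the disk, and stays inside the paper's toolkit: you do not need $\log h_*'$ at all. Push forward $\log h'$ itself: Proposition \ref{isomorphism2} gives $K_*(\log h')\in\widehat B_1(\mathbb S)$, and by Lemma \ref{W1}, $\widehat B_1(\mathbb S)=B_1^{\#}(\mathbb S)\subset \dot W^1_1(\mathbb S)\subset C^0(\mathbb S)$ (integrable derivative on the compact circle forces an absolutely continuous, hence continuous, representative). Since $\log h'\circ K^{-1}$ is already continuous off the single point $1=K(\infty)$, continuity of its representative at $1$ forces the two limits $a_\pm=\lim_{x\to\pm\infty}\log h'(x)$ to exist and coincide. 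With $a_+=a_-=a$, the chart computation at $\infty$ finishes: for $\tilde h(x)=1/h(1/x)$ one has $\tilde h'(x)=h'(1/x)/\bigl(x\,h(1/x)\bigr)^2\to e^{-a}$ as $x\to 0$ (L'H\^opital gives $h(t)/t\to e^{a}$ at both ends) and $\tilde h'(0)=e^{-a}$, so $h$ is $C^1$ with non-vanishing derivative at $\infty$; equivalently, $a_+=a_-$ makes your Cayley correction term $-2\log\frac{h(x)+i}{x+i}$ continuous at $\infty$, so $\log h_*'$ is in fact continuous on all of $\mathbb S$. Note that even with this repair your route differs genuinely from the paper's: the paper refers the corollary to \cite{AB}, whose proof uses the Teich\-m\"ul\-ler--Wittich--Belinski\v{\i} theorem (see \cite{Shishi}) --- the condition $\int_{|z-z_0|<r}|\mu(z)|\,|z-z_0|^{-2}dxdy<\infty$, automatic at every $z_0\in\widehat{\mathbb R}$ when $\mu\in M_1$, yields a non-zero complex derivative of the reflected quasiconformal extension at each boundary point including $\infty$ --- a pointwise quasiconformal argument, whereas yours is a function-space argument on the compactified boundary.
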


This result was proved in \cite{AB}. 
Its proof utilizes the {\it Teich\-m\"ul\-ler--Wittich--Belinski\v{\i} theorem} (see \cite{Shishi}), 
which asserts that for a quasiconformal map $H$ with complex dilatation $\mu$, 
the local integrability condition at a point $z_0$ given by
$$
\int_{|z-z_0|<r} \frac{|\mu(z)|}{|z-z_0|^2}dxdy<\infty
$$
for some $r>0$ implies that $H$ has a non-zero complex derivative $H'(z_0)$ at $z_0$. If the complex dilatation $\mu$ of 
a quasiconformal map $H:\mathbb H^+ \to \mathbb H^+$ belongs to $M_1(\mathbb H^+)$, then the extension of $H$ 
to a quasiconformal homeomorphism of $\mathbb C$ by taking the reflection with respect to $\mathbb R$ (with complex dilatation $\bar \mu$ on $\mathbb H^-$) satisfies the above condition at any $z_0 \in \mathbb R$.

\section{Weil--Petersson Embedding in Bers Coordinates}\label{7}

We regard Weil--Petersson curves as the images of certain embeddings $\gamma:\mathbb{R} \to \mathbb C$, and we use Teich\-m\"ul\-ler space to coordinate these embeddings. 
We first recall the integrable Teich\-m\"ul\-ler space $T_p$ defined in Section \ref{2}.

For $p \geq 1$, the $p$-integrable Teich\-m\"ul\-ler space $T_p$ is the
set of all normalized quasisymmetric homeomorphisms $h:\mathbb R \to \mathbb R$
that can be extended to quasiconformal homeomorphisms $H:\mathbb H \to \mathbb H$
whose complex dilatations belong to $M_p(\mathbb H)$. By this correspondence from Beltrami coefficients $\mu$
to quasisymmetric homeomorphisms $h(\mu)$ via quasiconformal ones $H(\mu)$, we have
the Teich\-m\"ul\-ler projection $\pi: M_p(\mathbb H) \to T_p$.
An element $h(\mu)$ of $T_p$ can be represented by a Teich\-m\"ul\-ler equivalence class $[\mu]$ for $\mu \in M_p(\mathbb H)$.

Let $F^\mu$ denote the normalized quasiconformal homeomorphism of $\mathbb C$ whose complex dilatation
is $\mu \in M_p(\mathbb H^+)$ on $\mathbb H^+$ and $0$ on $\mathbb H^-$.
The Schwarzian derivative map $S:M_p(\mathbb H^+) \to {\mathcal A}_p(\mathbb H^-)$ is defined by the
correspondence $\mu \mapsto S(F^\mu|_{\mathbb H^-})$.
This induces
the Bers embedding $\alpha:T_p \to {\mathcal A}_p(\mathbb H^-)$ such that
$\alpha \circ \pi=S$. By showing that $\alpha$ is a topological embedding, we provide the
complex Banach structure for $T_p$.

In a similar way,
the pre-Schwarzian derivative map $L:M_p(\mathbb H^+) \to \widehat{\mathcal B}_p(\mathbb H^-)$
is defined by the
correspondence $\mu \mapsto \log (F^\mu|_{\mathbb H^-})'$. This induces
a well-defined injection $\beta:T_p \to \widehat{\mathcal B}_p(\mathbb H^-)$ such that
$\beta \circ \pi=L$. We call $\beta$ the {\it pre-Bers embedding}.
The following claim is known to be true for all $p \geq 1$ in \cite[Theorem 5.1]{WM-*}.

\begin{proposition}\label{pre-Bers}
The pre-Bers embedding $\beta:T_p \to \widehat{\mathcal B}_p(\mathbb H^-)$ for $p \geq 1$ is
a biholomorphic homeomorphism onto the connected open subset $\beta(T_p)=L(M_p(\mathbb H^+))$ in $\widehat{\mathcal B}_p(\mathbb H^-)$.
\end{proposition}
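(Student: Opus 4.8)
The plan is to deduce all the required properties of $\beta$ from the corresponding properties of the Bers embedding $\alpha:T_p \to \mathcal A_p(\mathbb H^-)$, which already defines the complex Banach structure of $T_p$ as a biholomorphic homeomorphism onto a connected open subset (Section \ref{2}), by exploiting the classical algebraic relation between the pre-Schwarzian and the Schwarzian derivative. Writing $\Phi=\log F'$, one has $S_F=\Phi''-\tfrac12(\Phi')^2$, so the nonlinear operator
$$
\mathcal N(\Phi)=\Phi''-\tfrac12(\Phi')^2
$$
satisfies $\alpha=\mathcal N\circ\beta$ on $T_p$. I would show that $\mathcal N$ is a holomorphic map $\widehat{\mathcal B}_p(\mathbb H^-)\to\mathcal A_p(\mathbb H^-)$ whose differential is a Banach isomorphism at each relevant point; then $d\beta=(d\mathcal N)^{-1}\circ d\alpha$ is an isomorphism everywhere, and combined with global injectivity this forces $\beta$ to be a biholomorphic homeomorphism onto an open image.

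First I would dispose of well-definedness, target, and injectivity. Well-definedness is immediate from the fact recalled in Section \ref{2} that $\pi(\mu)=\pi(\nu)$ if and only if $F^\mu|_{\mathbb H^-}=F^\nu|_{\mathbb H^-}$, so $\log(F^\mu|_{\mathbb H^-})'$ depends only on $[\mu]$; that its values lie in $\widehat{\mathcal B}_p(\mathbb H^-)$ and that $\beta(T_p)=L(M_p(\mathbb H^+))$ is exactly the content of Lemma \ref{conformal}. For injectivity, $\beta([\mu])=\beta([\nu])$ forces $(F^\mu)'=(F^\nu)'$ on $\mathbb H^-$, hence $F^\mu$ and $F^\nu$ agree there after matching the single additive constant pinned down by the normalization, so $[\mu]=[\nu]$.

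Next I would establish holomorphy and the local isomorphism property. That $L:M_p(\mathbb H^+)\to\widehat{\mathcal B}_p(\mathbb H^-)$, $\mu\mapsto\log(F^\mu|_{\mathbb H^-})'$, is holomorphic rests on the holomorphic dependence of the normalized Beltrami solution $F^\mu$ on $\mu$ together with an estimate of the difference quotients in the $\widehat{\mathcal B}_p$-norm, the $p=1$ case using the Hardy-norm description of $\widehat{\mathcal B}_1(\mathbb H^-)$ noted after Proposition \ref{trace}; since $\pi$ admits local holomorphic sections for the complex structure furnished by $\alpha$, the identity $\beta\circ\pi=L$ then yields holomorphy of $\beta$. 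For $\mathcal N$ itself, the linear term $\Phi''$ lands in $\mathcal A_p$ by the definition of $\mathcal B_p^\#$, while for the quadratic term I would write $(\mathrm{Im}\,z)^2(\Phi')^2=((\mathrm{Im}\,z)\Phi')^2$ and control it by coupling the $\mathcal B_p^\#$-bound with the uniform $\mathcal A_\infty$/BMOA bound valid on the bounded domain $\alpha(T_p)$. Its differential $d_\Phi\mathcal N(\eta)=\eta''-\Phi'\eta'$ is solved by the integrating-factor $e^{\Phi}=F'$, reducing invertibility $\widehat{\mathcal B}_p(\mathbb H^-)\to\mathcal A_p(\mathbb H^-)$ to bounded integration and multiplication operators; at the origin this is just the double-integration inverse of $\eta\mapsto\eta''$ (bounded both ways by the norm equivalences of Section \ref{4}, with the normalization fixing the two integration constants), and the bounded composition operators of Sections \ref{5} and \ref{6} handle the general base point. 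The holomorphic inverse function theorem then makes $\beta$ a local biholomorphism; injectivity upgrades this to a biholomorphic homeomorphism onto its image, which is open and connected because $\alpha(T_p)$ is and $(d\mathcal N)^{-1}$ is an open map.

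I expect the main obstacle to be the boundedness and holomorphic dependence of the quadratic term of $\mathcal N$, and the invertibility of $d\mathcal N$, in the critical case $p=1$. Unlike $\Phi''$, the factor $(\mathrm{Im}\,z)\Phi'$ is only controlled in $L_1$ and through the Carleson/BMOA condition, so proving that its square lies in $\mathcal A_1(\mathbb H^-)$ with holomorphic dependence requires carefully intertwining the $\mathcal B_1^\#$ estimate with the BMOA bound, and this dovetails with the delicate definition of $\widehat{\mathcal B}_1(\mathbb H^-)$.
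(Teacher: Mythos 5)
The paper does not actually prove Proposition \ref{pre-Bers}: it quotes it from \cite[Theorem 5.1]{WM-*}, so your argument has to stand on its own, and I judge it as such. Its algebraic skeleton is sound: well-definedness and $\beta(T_p)=L(M_p(\mathbb H^+))$ do follow from Lemma \ref{conformal}; injectivity works (note that since $\widehat{\mathcal B}_p$ is a space modulo constants you must remove a multiplicative constant $e^c$ in $(F^\mu)'=e^c(F^\nu)'$ as well as an additive one, which the normalization at $0,1$ does); and $\mathcal N(\Phi)=\Phi''-\tfrac12(\Phi')^2$ is indeed a bounded quadratic, hence holomorphic, map $\widehat{\mathcal B}_p(\mathbb H^-)\to\mathcal A_p(\mathbb H^-)$ for every $p\ge 1$. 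The clean way to see this is $\widehat{\mathcal B}_p\hookrightarrow\widehat{\mathcal B}_{2p}\simeq\mathcal B_{2p}$, which gives $(\Phi')^2\in\mathcal A_p$ with norm $\lesssim\Vert\Phi\Vert_{\widehat{\mathcal B}_p}^2$; no restriction to the bounded set $\alpha(T_p)$ is needed, and $p=1$ causes no special trouble in this step, contrary to where you locate the difficulty.

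The genuine gap is the claim that $d_\Phi\mathcal N(\eta)=\eta''-\Phi'\eta'$ is a Banach isomorphism onto $\mathcal A_p(\mathbb H^-)$ at every $\Phi\in\beta(T_p)$. Injectivity is cheap (the kernel consists of $\eta=cF+d$ with $F'=e^\Phi$, and $F\notin{\rm BMOA}(\mathbb H^-)$), but surjectivity with bounded inverse is not. Even at $\Phi=0$ it is not given by ``the norm equivalences of Section \ref{4}'': those equivalences do not produce, for a given $\psi\in\mathcal A_p(\mathbb H^-)$, a second antiderivative lying in ${\rm BMOA}$. Any second antiderivative lies in $\mathcal B_p^{\#}$ tautologically, but on the half-plane (unlike the disk) it is not automatic that some affine correction of it is BMOA --- that is exactly the surjectivity in question. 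It is provable, e.g.\ by setting $\eta'(z)=-i\int_0^\infty\psi(z+it)\,dt$ (convergent since $\mathcal A_p\hookrightarrow\mathcal A_\infty$) and applying Hardy's inequality for $p>1$, and for $p=1$ Fubini together with the Hardy-norm description of $\widehat{\mathcal B}_1$ recorded after Proposition \ref{boundary}; but this is a construction and estimate, not bookkeeping of integration constants. Worse, at a general base point the step fails as written: multiplication by $e^{\pm\Phi}=(F')^{\pm1}$ is not a bounded operator on any space in play ($|F'|$ is neither bounded above nor below on $\mathbb H^-$), and the composition operators of Sections \ref{5}--\ref{6} act by pre-composition with quasisymmetric maps of $\mathbb R$ on $\widehat B_p(\mathbb R)$, so they say nothing about your integrating factor. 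What your formula really encodes is conjugation by $F$, namely $d_\Phi\mathcal N(\zeta\circ F)=(\zeta''\circ F)(F')^2$, so bounded invertibility of $d_\Phi\mathcal N$ is equivalent to double differentiation being an isomorphism from the Besov space of the quasidisk $F(\mathbb H^-)$ onto the corresponding space of quadratic differentials --- a statement about a general Weil--Petersson domain of essentially the same depth as the proposition itself, for which the vertical-ray construction has no analogue. Finally, the holomorphy of $L$ (hence of $\beta$), which you defer to ``difference quotient estimates in the $\widehat{\mathcal B}_p$-norm,'' is precisely the analytic core of the cited theorem; in a proof meant to replace that citation it cannot be waved through, and for $p=1$ it is where the Hardy-norm subtleties actually live.
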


The integrable Teich\-m\"ul\-ler space $T_p$ possesses the group structure as a subgroup of $T$.
For every $[\nu] \in T_p$, the right translation $r_{[\nu]}:T_p \to T_p$ of the group elements of $T_p$
is defined by $[\mu] \mapsto [\mu] \ast [\nu]$.
The following results are proved in \cite[Theorem 6.1]{WM-4} and \cite[Section 4]{WM-1}.

\begin{proposition}\label{group}
For $p \geq 1$, $T_p$ is a topological group. Moreover,
every right translation $r_{[\nu]}$ is a biholomorphic automorphism of $T_p$.
\end{proposition}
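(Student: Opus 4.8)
The plan is to establish the two assertions in turn: first that every right translation $r_{[\nu]}$ is a biholomorphic automorphism, and then, building on this, that the multiplication and inversion are continuous, so that $T_p$ is a topological group. The technical engine for the continuity statements will be the cocycle identity for $\log h'$ under composition, combined with the boundedness of composition operators from Proposition \ref{p>1} (for $p>1$) and Theorem \ref{composition} (for $p=1$).

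To prove that $r_{[\nu]}$ is biholomorphic I would work on the level of Beltrami coefficients. Fixing a representative $\nu \in M_p(\mathbb H)$ with quasiconformal extension $g = H(\nu)$, the composition formula for the dilatation of $H(\mu)\circ H(\nu)$ expresses $\mu \ast \nu$ as a M\"obius-type holomorphic expression in $\mu \circ g$, namely of the form
$$
\mu \ast \nu = \frac{\nu + (\mu\circ g)\,\theta}{1 + \bar\nu\,(\mu\circ g)\,\theta}, \qquad \theta = \overline{g_z}/g_z,
$$
with $\nu$ and $\theta$ fixed. The pull-back $\mu \mapsto \mu\circ g$ is a bounded complex-linear map of $M_p(\mathbb H)$ into itself by a change-of-variables estimate that uses the controlled hyperbolic distortion of the quasiconformal self-homeomorphism $g$, and the rational operation above keeps the result in $M_p(\mathbb H)$ since the denominator is bounded away from $0$. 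Hence $R_\nu:\mu \mapsto \mu\ast\nu$ is a holomorphic self-map of $M_p(\mathbb H)$ whose inverse is $R_{\nu^{-1}}$ for a representative $\nu^{-1}$ of $[\nu]^{-1}$ in $M_p(\mathbb H)$ (such a representative exists, as in the proof of Theorem \ref{main1}). Since $R_\nu$ descends through the Teich\-m\"ul\-ler projection $\pi$ to $r_{[\nu]}$, and the complex structure of $T_p$ is induced through the Bers embedding $\alpha$ (equivalently the pre-Bers embedding $\beta$ of Proposition \ref{pre-Bers}) via local holomorphic sections of $\pi$, it follows that $r_{[\nu]}$ is a biholomorphic automorphism of $T_p$ with inverse $r_{[\nu]^{-1}}$.

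For the topological group structure it remains to prove joint continuity of $m([\mu],[\nu]) = [\mu]\ast[\nu]$ and continuity of the inversion. Passing to the coordinate $[\mu]\mapsto \log h(\mu)'$ into ${\rm Re}\,\widehat B_p(\mathbb R)$, whose continuity follows from Theorem \ref{main1} together with the continuity of $\beta$ and the composition-operator bounds, I would exploit the cocycle identity
$$
\log\bigl(h(\mu)\circ h(\nu)\bigr)' = C_{h(\nu)}\bigl(\log h(\mu)'\bigr) + \log h(\nu)'.
$$
The summand $\log h(\nu)'$ depends continuously on $[\nu]$, and for the first summand I would write
$$
C_{h(\nu)}(\log h(\mu)') - C_{h(\nu_0)}(\log h(\mu_0)') = C_{h(\nu)}\bigl(\log h(\mu)' - \log h(\mu_0)'\bigr) + \bigl(C_{h(\nu)} - C_{h(\nu_0)}\bigr)\bigl(\log h(\mu_0)'\bigr).
$$
The first term tends to $0$ because $\Vert C_{h(\nu)}\Vert$ is locally bounded — by the quasisymmetry constant when $p>1$ (Proposition \ref{p>1}) and by $\Vert\log h(\nu)'\Vert_{\dot W^1_1}\le \Vert\log h(\nu)'\Vert_{\widehat B_1}$ when $p=1$ (Theorem \ref{composition}) — while $\log h(\mu)'\to \log h(\mu_0)'$. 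Continuity of the inversion is handled analogously from $\log(h^{-1})' = -C_{h^{-1}}(\log h')$ and the continuity of $[\mu]\mapsto[\mu]^{-1}$ on representatives in $M_p(\mathbb H)$.

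The main obstacle is the second term $(C_{h(\nu)} - C_{h(\nu_0)})(\log h(\mu_0)')$, that is, the strong continuity of the family $\nu \mapsto C_{h(\nu)}(\psi)$ at a fixed $\psi = \log h(\mu_0)' \in \widehat B_p(\mathbb R)$. I would prove this by a density-and-uniform-boundedness argument: approximate $\psi$ in $\widehat B_p(\mathbb R)$ by smooth functions, for which $\psi\circ h(\nu)\to \psi\circ h(\nu_0)$ in $\widehat B_p(\mathbb R)$ can be estimated directly from the uniform-on-compacta convergence $h(\nu)\to h(\nu_0)$ and the convergence of their derivatives, and then pass to the limit using the uniform operator bound. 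For $p=1$ this is the most delicate point, since it relies on the refined description $\widehat B_1(\mathbb R) = B_1^{\#}(\mathbb R)\cap \dot W^1_1(\mathbb R)$ from Lemma \ref{W1} and on the sharp norm control of Theorem \ref{composition}, whose tending to $1$ as $\Vert\log h'\Vert_{\dot W^1_1}\to 0$ is precisely what makes $T_1$ — unlike the full universal Teich\-m\"ul\-ler space — a topological group.
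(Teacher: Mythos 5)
First, note that the paper does not prove this proposition at all: it is imported from \cite[Theorem 6.1]{WM-4} and \cite[Section 4]{WM-1}, so your argument has to stand entirely on its own. It does not, because the step on which everything else rests is unjustified and, as justified, false: the claim that the pull-back $\mu \mapsto \mu \circ g$, $g=H(\nu)$, is a bounded linear self-map of $M_p(\mathbb H)$ ``by a change-of-variables estimate that uses the controlled hyperbolic distortion of the quasiconformal self-homeomorphism $g$.'' A quasiconformal self-map of $\mathbb H$ is only a \emph{coarse} quasi-isometry of the hyperbolic plane; it does not satisfy the pointwise a.e.\ Jacobian bound $J_g(z) \gtrsim ({\rm Im}\,g(z))^2/({\rm Im}\,z)^2$ that the change of variables requires. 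Concretely, let $g$ perform the radial $K$-stretch $z \mapsto c_n+(z-c_n)|(z-c_n)/R_n|^{K-1}$ inside infinitely many disjoint hyperbolic balls $B_n$ of radius $1$ (identity elsewhere); this is a $K$-quasiconformal self-homeomorphism of $\mathbb H$. Taking $\mu=\tfrac12\sum_n 1_{E_n}$, where $E_n \subset B_n$ is the concentric sub-ball of relative radius $\epsilon_n$ with $\epsilon_n^2=n^{-K}$, one gets $\Vert \mu \Vert_p^p \asymp \sum_n n^{-K}<\infty$ but $\Vert \mu \circ g \Vert_p^p \asymp \sum_n \epsilon_n^{2/K}=\sum_n n^{-1}=\infty$. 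So for a general quasiconformal $g$ the pull-back does not even map $M_p(\mathbb H)$ into itself, and your M\"obius-type formula then fails to define a locally bounded (hence holomorphic) self-map of $M_p(\mathbb H)$. What is true — that $[\mu]\ast[\nu]$ and $[\nu]^{-1}$ admit representatives in $M_p(\mathbb H)$ — is exactly the nontrivial content of the cited literature, and its proof must genuinely use that the dilatation of $g$ itself lies in $M_p(\mathbb H)$, or pass to specially chosen representatives; the paper itself signals this (``by choosing a suitable representative $\mu$ \dots\ $\mu^{-1}$ also belongs to $M_p(\mathbb H)$'' in the proof of Theorem \ref{main1}, and the Ahlfors--Weill construction of representatives in Section \ref{8}). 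Your one-line appeal to quasiconformality skips the heart of the proposition.

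The topological-group half is also circular relative to what is available at this point. You transfer continuity of $([\mu],[\nu]) \mapsto [\mu]\ast[\nu]$ to the coordinate $[\mu]\mapsto \log h(\mu)'$ via the cocycle identity, but you must then come back: convergence of $\log (h(\mu)\circ h(\nu))'$ in $\widehat B_p(\mathbb R)$ has to imply convergence of $[\mu]\ast[\nu]$ in the topology of $T_p$. That inverse continuity is precisely the content of Theorem \ref{main2} and Corollary \ref{structure}, which come later and whose proofs use Proposition \ref{group} itself (through the translations $r_{[\nu]}$, $R_{[\nu]}$ and relation \eqref{translation}); only the forward direction, Theorem \ref{main1}, is available here. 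Similarly, the strong continuity $\nu \mapsto C_{h(\nu)}\psi$ that you propose to obtain by density plus uniform boundedness is Lemma \ref{strong}, which the paper again imports from \cite{WM-4}; and for $p=1$ your density step itself needs a proof that smooth functions are dense in $\widehat B_1(\mathbb R)=B_1^{\#}(\mathbb R)\cap \dot W^1_1(\mathbb R)$, which you do not supply (in the paper, density of nice functions is deduced only after the surjectivity of $\Lambda$ near the origin, again downstream of this proposition). So both halves of the proposal rest on statements that are either false as justified or logically posterior to the statement being proved.
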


We introduce the Weil--Petersson embeddings as follows. 

\begin{definition}
For $\mu^+ \in M_p(\mathbb H^+)$ and $\mu^- \in M_p(\mathbb H^-)$, we denote by $G(\mu^+,\mu^-)$
the normalized quasiconformal self-homeomorphism of $\mathbb C$ whose complex dilatation on $\mathbb H^+$
is $\mu^+$ and on $\mathbb H^-$ is $\mu^-$. The restriction of $G(\mu^+,\mu^-)$ to $\mathbb R$ is called
a {\it $p$-Weil--Petersson embedding}. Its image is referred to as a $p$-Weil--Petersson curve. 
\end{definition}

The following claim guarantees that
$p$-Weil--Petersson embeddings are parametrized by the product of Teich\-m\"ul\-ler spaces.
The proof is the same as that in \cite[Proposition 4.1]{WM-2}.

\begin{proposition}
$G(\mu^+,\mu^-)|_{\mathbb R}=G(\nu^+,\nu^-)|_{\mathbb R}$ if and only if $[\mu^+]=[\nu^+]$ and
$[\mu^-]=[\nu^-]$.
\end{proposition}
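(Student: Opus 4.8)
The plan is to prove both directions of the equivalence by reducing the statement to the already-established simultaneous uniformization principle together with the univalent analogue of the conformal welding argument. The claim asserts that the boundary embedding $G(\mu^+,\mu^-)|_{\mathbb R}$ depends only on the two Teich\-m\"ul\-ler classes $[\mu^+]$ and $[\mu^-]$, not on the chosen representatives. First I would set up notation: write $G=G(\mu^+,\mu^-)$ and $G'=G(\nu^+,\nu^-)$, and observe that $G$ maps $\mathbb H^+$ and $\mathbb H^-$ to the two complementary Jordan domains $\Omega^+$ and $\Omega^-$ separated by the quasicircle $\Gamma=G(\mathbb R)$, with $G|_{\mathbb H^\pm}$ conformally equivalent (after post-composition with a Riemann map) to the conformal part of a welding decomposition.

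\textbf{The `only if' direction.} Suppose $[\mu^+]=[\nu^+]$ and $[\mu^-]=[\nu^-]$. The key tool is the characterization of Teich\-m\"ul\-ler equivalence recalled in Section \ref{2}: two Beltrami coefficients on $\mathbb H^+$ are equivalent exactly when the associated maps $F^{\mu^+}$ and $F^{\nu^+}$, conformal on $\mathbb H^-$, agree on $\mathbb H^-$ (equivalently on $\mathbb R$). The strategy is to factor $G(\mu^+,\mu^-)$ through the two one-sided uniformizing maps. Concretely, I would write $G(\mu^+,\mu^-)=F_2\circ h_+$ on $\mathbb H^+$ and $G(\mu^+,\mu^-)=F_1\circ h_-$ on $\mathbb H^-$, where $h_\pm$ are quasisymmetric self-homeomorphisms of $\mathbb H^\pm$ realizing $[\mu^\pm]$ and $F_1,F_2$ are the conformal maps of the complementary domains, exactly as in the conformal welding set-up of Section \ref{6}. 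Since the boundary values $h_\pm|_{\mathbb R}$ depend only on $[\mu^\pm]$ and the conformal maps $F_1,F_2$ are determined (up to the fixed normalization at $0,1,\infty$) by the welding homeomorphism, which itself depends only on the classes, the composite $G|_{\mathbb R}$ is unchanged when passing to $(\nu^+,\nu^-)$.

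\textbf{The `if' direction.} Conversely, suppose $G(\mu^+,\mu^-)|_{\mathbb R}=G(\nu^+,\nu^-)|_{\mathbb R}$. Then $\Phi:=G(\nu^+,\nu^-)\circ G(\mu^+,\mu^-)^{-1}$ is a quasiconformal self-homeomorphism of $\mathbb C$ that is the identity on $\mathbb R$, hence preserves $\mathbb H^+$ and $\mathbb H^-$ separately and restricts to a quasiconformal self-homeomorphism of each half-plane that is the identity on $\mathbb R$. Its restriction to $\mathbb H^+$ therefore represents the trivial class, and by the composition law $\pi(\nu^+)=\pi(\Phi|_{\mathbb H^+})\ast\pi(\mu^+)=\pi(\mu^+)$, giving $[\nu^+]=[\mu^+]$; symmetrically $[\nu^-]=[\mu^-]$. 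The one subtlety to handle carefully is the normalization: I must confirm that fixing $0,1,\infty$ for $G(\mu^+,\mu^-)$ and $G(\nu^+,\nu^-)$ forces $\Phi$ to fix $0,1,\infty$ as well, so that $\Phi|_{\mathbb H^\pm}$ are genuinely normalized and represent $[0]$. I expect the main obstacle to lie precisely here, in verifying that identity-on-$\mathbb R$ implies triviality of each one-sided class under the correct normalization and that the group homomorphism property of $\pi$ applies cleanly in the product setting; the referenced argument from \cite[Proposition 4.1]{WM-2} should supply the template, since the proof is asserted to be the same.
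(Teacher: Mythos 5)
Your two-direction architecture (factor $G$ through the normalized one-sided maps, use welding data for one implication and a trivial-class argument for the other) is the natural one and is essentially what the proof cited by the paper (\cite[Proposition 4.1]{WM-2}) follows, but each direction as written has a real problem.

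In the ``if'' direction your auxiliary map is composed in the wrong order, and the properties you claim for it are false. For $x\in\mathbb R$, the map $\Phi=G(\nu^+,\nu^-)\circ G(\mu^+,\mu^-)^{-1}$ sends $G(\mu^+,\mu^-)(x)$ to $G(\nu^+,\nu^-)(x)$, so the hypothesis makes $\Phi$ the identity on the quasicircle $\Gamma=G(\mu^+,\mu^-)(\mathbb R)$, not on $\mathbb R$; it preserves the complementary domains $\Omega^\pm$ of $\Gamma$, not the half-planes, so $\pi(\Phi|_{\mathbb H^+})$ is not even defined and the displayed identity $\pi(\nu^+)=\pi(\Phi|_{\mathbb H^+})\ast\pi(\mu^+)$ makes no sense. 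The correct object is the reversed composition $\Psi=G(\nu^+,\nu^-)^{-1}\circ G(\mu^+,\mu^-)$: by hypothesis this is a quasiconformal self-homeomorphism of $\mathbb C$ equal to the identity on $\mathbb R$, hence it preserves $\mathbb H^\pm$, and $\Psi|_{\mathbb H^+}=H(\lambda)$ is the normalized map of its own dilatation $\lambda$ with $\pi(\lambda)=[0]$. From $G(\mu^+,\mu^-)|_{\mathbb H^+}=G(\nu^+,\nu^-)|_{\mathbb H^+}\circ H(\lambda)$, comparing complex dilatations (for a composition, the dilatation depends only on the dilatation of the outer factor and on the inner map, so it equals that of $H(\nu^+)\circ H(\lambda)$) gives $\mu^+=\nu^+\ast\lambda$, hence $[\mu^+]=[\nu^+]\ast[0]=[\nu^+]$, and similarly on $\mathbb H^-$. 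Your worry about normalization then disappears, since $\Psi$ fixes $\mathbb R$ pointwise. So this direction is salvageable, but only after the order is corrected.

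In the ``only if'' direction, the sentence ``the conformal maps $F_1,F_2$ are determined (up to the fixed normalization) by the welding homeomorphism'' is not a reduction; it \emph{is} the entire content of this implication, namely uniqueness of normalized conformal welding along a quasicircle, which requires conformal removability of quasicircles (or an equivalent), and you neither prove nor cite it. (Also, for the factorization to make sense, your $h_\pm$ must be quasiconformal self-maps of $\mathbb H^\pm$ with dilatation exactly $\mu^\pm$, not merely maps realizing the class.) In the present setting one can close the gap without the removability black box: if $[\mu^\pm]=[\nu^\pm]$, then $H(\nu^+)^{-1}\circ H(\mu^+)$ and $H(\nu^-)^{-1}\circ H(\mu^-)$ both restrict to the identity on $\mathbb R$, so they glue to a quasiconformal self-homeomorphism $K$ of $\mathbb C$ (a line is removable for quasiconformality). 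Then $\Theta=G(\nu^+,\nu^-)\circ K\circ G(\mu^+,\mu^-)^{-1}$ is a global quasiconformal map which is conformal on $\Omega^+$ and on $\Omega^-$, because on $\Omega^+$ it factors as the composition of the two conformal maps $H(\mu^+)\circ\bigl(G(\mu^+,\mu^-)|_{\mathbb H^+}\bigr)^{-1}$ and $G(\nu^+,\nu^-)|_{\mathbb H^+}\circ H(\nu^+)^{-1}$, and likewise on $\Omega^-$. Since a quasicircle has zero area, the Beltrami coefficient of $\Theta$ vanishes a.e., so $\Theta$ is conformal on $\mathbb C$, i.e.\ a M\"obius transformation, and the normalization at $0,1,\infty$ forces $\Theta=\mathrm{id}$; restricting to $\mathbb R$, where $K$ is the identity, gives $G(\mu^+,\mu^-)|_{\mathbb R}=G(\nu^+,\nu^-)|_{\mathbb R}$. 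Unless you supply an argument of this kind, or an explicit reference for welding uniqueness, this direction of your proposal remains an assertion rather than a proof.
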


Hence, a $p$-Weil--Petersson embedding $G(\mu^+,\mu^-)|_{\mathbb R}$ is determined by
the pair of the Teich\-m\"ul\-ler equivalence classes, 
which is denoted by $\gamma([\mu^+],[\mu^-])$.
Let $T_p^+=\pi(M_p(\mathbb H^+))$ and $T_p^-=\pi(M_p(\mathbb H^-))$.
Then, $T_p^+ \times T_p^-$ becomes the coordinates of the space of $p$-Weil--Petersson embeddings.
For the space of quasi-Fuchsian groups, this method is called
{\it simultaneous uniformization}, which is originally due to Bers. 
The entire space of quasi-Fuchsian groups can be coordinated as a product of Teich\-m\"ul\-ler spaces. 
Similarly, the entire space of $p$-Weil--Petersson embeddings can be coordinated as $T_p^+ \times T_p^-$. 
We refer to this as {\it the Bers coordinates}. 

Let $\bar \mu \in M_p(\mathbb H^-)$ denote the reflection $\overline{\mu(\bar z)}$ 
of a Beltrami coefficient $\mu(z)$ for $z \in \mathbb H^+$.
Then, $G(\mu, \bar \mu)$ is nothing but the normalized quasiconformal homeomorphism $H(\mu):\mathbb C \to \mathbb C$ preserving $\mathbb R$,
and $h=G(\mu, \bar \mu)|_{\mathbb R}=\gamma([\mu],[\bar \mu])$ is the corresponding quasisymmetric homeomorphism of $\mathbb R$,
which can be regarded as an element of the Teich\-m\"ul\-ler space $T_p$. 
The {\it axis of symmetry} of the product space $T_p^+ \times T_p^-$ is defined as
$$
{\rm Sym}\,(T_p^+ \times T_p^-)=\{([\mu],[\bar \mu]) \mid [\mu] \in T_p\}.
$$
The canonical map $\iota:T_p \to {\rm Sym}\,(T_p^+ \times T_p^-) \subset T_p^+ \times T_p^-$
defined by $[\mu] \mapsto ([\mu],[\bar \mu])$ is a real-analytic embedding, and hence
${\rm Sym}\,(T_p^+ \times T_p^-)$ is a real-analytic submanifold of $T_p^+ \times T_p^-$.

Regarding the group structure of $T_p$, the right transformation $r_{[\nu]}$ defined by $[\nu] \in T_p$ is extended to $T_p^+ \times T_p^-$, defining the parallel translation 
$$
R_{[\nu]}([\mu^+],[\mu^-]) = (r_{[\nu]}([\mu^+]), r_{[\bar \nu]}([\mu^-]))=([\mu^+] \ast [\nu], [\mu^-] \ast [\bar \nu]).
$$
This is a biholomorphic automorphism of $T_p^+ \times T_p^-$ that preserves  
${\rm Sym}\,(T_p^+ \times T_p^-)$.

Theorem \ref{main1} shows that
any element $h=\gamma([\mu],[\bar \mu]) \in T_p$ with $\mu \in M_p(\mathbb H^+)$ for $p \geq 1$ is
locally absolutely continuous and $\log h'$ belongs to ${\rm Re}\,\widehat B_p(\mathbb R)$.
This claim can be generalized as follows using the method of conformal welding. 

\begin{corollary}\label{main-b}
Let $\gamma=\gamma([\mu^+],[\mu^-])$ be a $p$-Weil--Petersson embedding for $p \geq 1$.
Then, $\gamma:\mathbb R \to \mathbb C$ is locally absolutely continuous and
$\log \gamma'$ belongs to $\widehat{B}_p(\mathbb R)$.
\end{corollary}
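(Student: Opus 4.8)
The plan is to mirror the proof of Theorem \ref{main1}: combine the boundary behaviour of a single conformal map (Lemma \ref{app}) with the self-homeomorphism case (Theorem \ref{main1}), gluing the two through the boundedness of a composition operator. First I factor the Weil--Petersson embedding $\gamma=G(\mu^+,\mu^-)|_{\mathbb R}$ through the symmetric Teich\-m\"ul\-ler element carried by $\mu^-$. Let $H(\mu^-)$ denote the normalized quasiconformal self-homeomorphism of $\mathbb C$ preserving $\mathbb R$ whose complex dilatation is $\mu^-$ on $\mathbb H^-$ and its reflection $\bar\mu^-$ on $\mathbb H^+$, and set $h^-=H(\mu^-)|_{\mathbb R}$. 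Then $h^-$ is a normalized quasisymmetric self-homeomorphism of $\mathbb R$ lying in $T_p$, so Theorem \ref{main1} applies and gives that $h^-$ is locally absolutely continuous with $\log (h^-)'\in\widehat B_p(\mathbb R)$.

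Next I introduce $\gamma_0:=\gamma\circ(h^-)^{-1}=(G(\mu^+,\mu^-)\circ H(\mu^-)^{-1})|_{\mathbb R}$. The map $G(\mu^+,\mu^-)\circ H(\mu^-)^{-1}$ is a normalized quasiconformal self-homeomorphism of $\mathbb C$, and on $\mathbb H^-$ it is conformal: there $G(\mu^+,\mu^-)$ and $H(\mu^-)$ share the same complex dilatation $\mu^-$, so post-composing $H(\mu^-)^{-1}$ with $G(\mu^+,\mu^-)$ cancels the dilatation, and $\gamma_0|_{\mathbb H^-}$ is a conformal map of $\mathbb H^-$ onto $\Omega^-=G(\mu^+,\mu^-)(\mathbb H^-)$. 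The essential point is that the complex dilatation $\nu$ of $G(\mu^+,\mu^-)\circ H(\mu^-)^{-1}$ on $\mathbb H^+$ lies in $M_p(\mathbb H^+)$: on $\mathbb H^+$ the map is the composition of the self-map $H(\mu^-)^{-1}$ of $\mathbb H^+$, whose dilatation is in $M_p(\mathbb H^+)$ for a suitable representative by \cite[Lemma 3.1]{WM-*}, with $G(\mu^+,\mu^-)|_{\mathbb H^+}$, whose dilatation is $\mu^+\in M_p(\mathbb H^+)$. Since a quasiconformal self-map of $\mathbb H^+$ is a quasi-isometry of the hyperbolic metric, the change of variables preserves the $M_p$-class, and the algebraic combination of two Beltrami coefficients in $M_p$ remains in $M_p$; this is exactly the mechanism behind the group structure of $T_p$ and the parallel translations $R_{[\nu]}$ of the Bers coordinates (Proposition \ref{group}). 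Granting $\nu\in M_p(\mathbb H^+)$, the map equals $F^\nu$, so Lemma \ref{app} yields that $\gamma_0$ is locally absolutely continuous with $\log\gamma_0'\in\widehat B_p(\mathbb R)$.

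Finally I combine the two pieces. The image $\Gamma=\gamma(\mathbb R)$ is a chord-arc curve, so $\gamma=G(\mu^+,\mu^-)|_{\mathbb R}$ is locally absolutely continuous with $\gamma'\neq 0$ almost everywhere, by the same reasoning as in Lemma \ref{app} (cf.\ \cite[Lemma 3.2]{WM-4}). Writing $\gamma=\gamma_0\circ h^-$ and taking logarithms and derivatives as in \eqref{welding}, we obtain almost everywhere
\begin{equation}
\log\gamma'=(\log\gamma_0')\circ h^-+\log (h^-)'=C_{h^-}(\log\gamma_0')+\log (h^-)'.
\end{equation}
The second summand belongs to $\widehat B_p(\mathbb R)$ by the first paragraph. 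For the first summand I invoke the boundedness of the composition operator $C_{h^-}$ on $\widehat B_p(\mathbb R)$: for $p>1$ this is Proposition \ref{p>1}, and for $p=1$ it is Theorem \ref{composition}, whose hypothesis $\log (h^-)'\in\dot W^1_1(\mathbb R)$ is guaranteed by $\log (h^-)'\in\widehat B_1(\mathbb R)\subset\dot W^1_1(\mathbb R)$. Hence $C_{h^-}(\log\gamma_0')\in\widehat B_p(\mathbb R)$, and $\log\gamma'\in\widehat B_p(\mathbb R)$ follows.

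I expect the main obstacle to be the verification that $\nu\in M_p(\mathbb H^+)$, that is, that $p$-integrability of the Beltrami coefficient is preserved when $G(\mu^+,\mu^-)$ is composed with $H(\mu^-)^{-1}$; everything else reduces to direct applications of Lemma \ref{app}, Theorem \ref{main1}, and the composition-operator bounds. While this preservation is the content encoded by the group structure of $T_p$ in Proposition \ref{group}, the map $\gamma_0$ is not a self-map of $\mathbb R$, so one should carry out the dilatation composition formula together with the hyperbolic change of variables explicitly rather than quoting the group law verbatim.
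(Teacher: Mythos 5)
Your factorization $\gamma=\gamma_0\circ h^-$ is the mirror image of the paper's own decomposition (the paper writes $\gamma([\mu^+],[\mu^-])=\gamma([0],[\mu^-]\ast[\overline{\mu^+}]^{-1})\circ\gamma([\mu^+],[\overline{\mu^+}])$, peeling off the symmetric factor carried by $\mu^+$ rather than $\mu^-$), and your gluing step via Proposition \ref{p>1} and Theorem \ref{composition} is exactly the paper's final step. However, the step you yourself flagged is a genuine gap, and the mechanism you propose for closing it does not work. You claim that the dilatation $\nu$ of $G(\mu^+,\mu^-)\circ H(\mu^-)^{-1}$ on $\mathbb H^+$ lies in $M_p(\mathbb H^+)$ because ``a quasiconformal self-map of $\mathbb H^+$ is a quasi-isometry of the hyperbolic metric, so the change of variables preserves the $M_p$-class.'' This inference is unsound: a quasiconformal self-homeomorphism $H$ of $\mathbb H^+$ is only a \emph{coarse} quasi-isometry; it does not quasi-preserve the hyperbolic area measure, so the substitution $w=H(z)$ introduces the weight $J_H(z)(\mathrm{Im}\,z)^2/(\mathrm{Im}\,H(z))^2$, which is unbounded for general quasiconformal $H$ (it is controlled for hyperbolically bi-Lipschitz extensions such as the Douady--Earle extension, but not for arbitrary quasiconformal extensions). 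Consequently, the pointwise bound $|\nu|\circ H\lesssim|\mu^+|+|\overline{\mu^-}|$ does not yield $\Vert\nu\Vert_p<\infty$, and composition with $H^{-1}$ is not a bounded operation on $L_p$ of the hyperbolic measure. This is precisely the point where the paper is careful: in the proof of Theorem \ref{main1} it obtains $\mu^{-1}\in M_p(\mathbb H)$ only after ``choosing a suitable representative'' of the Teich\-m\"ul\-ler class (citing \cite[Lemma 3.1]{WM-*}), and it is also why the group structure of $T_p$ (Proposition \ref{group}) is a nontrivial theorem rather than a change-of-variables computation. Working with the fixed representatives $\mu^\pm$, as you do, is exactly what must be avoided.

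The repair is to work at the level of Teich\-m\"ul\-ler classes throughout, which is what the paper does. Since $\gamma_0|_{\mathbb R}=\gamma\circ(h^-)^{-1}$ depends only on the classes, identify it via relation \eqref{translation} as the Weil--Petersson embedding $\gamma_0=\gamma([\mu^+]\ast[\overline{\mu^-}]^{-1},[0])$. Proposition \ref{group} guarantees $[\mu^+]\ast[\overline{\mu^-}]^{-1}\in T_p^+$, i.e.\ this class possesses \emph{some} representative $\tilde\nu\in M_p(\mathbb H^+)$; then $\gamma_0=F^{\tilde\nu}|_{\mathbb R}$, and Lemma \ref{app} applied to $F^{\tilde\nu}$ gives that $\gamma_0$ is locally absolutely continuous with $\log\gamma_0'\in\widehat B_p(\mathbb R)$. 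No dilatation computation with the given $\mu^\pm$ is needed. With this one substitution the rest of your argument goes through verbatim and coincides with the paper's proof.
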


\begin{proof}
We represent $\gamma$ as the following composition:
$$
\gamma([\mu^+],[\mu^-])=\gamma([0],[\mu^-]\ast[\overline{\mu^+}]^{-1}) \circ \gamma([\mu^+],[\overline{\mu^+}]).
$$
Here, $\gamma_1=\gamma([0],[\mu^-]\ast[\overline{\mu^+}]^{-1})=F_{\nu}|_{\mathbb R}$
for the quasiconformal self-homeomorphism $F_{\nu}$ of $\mathbb C$ that is conformal 
on $\mathbb H^+$ and has complex dilatation $\nu$ on $\mathbb H^-$ with $[\nu]=[\mu^-]\ast[\overline{\mu^+}]^{-1}$, and
$\gamma_2=\gamma([\mu^+],[\overline{\mu^+}])=H(\mu^+)|_{\mathbb R}$ for the quasiconformal self-homeomorphism $H(\mu^+)$
of $\mathbb C$ with the indicated complex dilatation. Then, $\gamma_1$
is locally absolutely continuous and $\log \gamma_1' \in \widehat{B}_p(\mathbb R)$ by
Lemma \ref{app}. In addition, $\gamma_2$
is locally absolutely continuous and $\log \gamma_2' \in \widehat{B}_p(\mathbb R)$ by Theorem \ref{main1}.
Therefore, $\gamma=\gamma_1 \circ \gamma_2$ is locally absolutely continuous and
$\log \gamma'=\log \gamma_1' \circ \gamma_2+\log \gamma_2'$ belongs to $\widehat{B}_p(\mathbb R)$ by
Proposition \ref{p>1} and Theorem \ref{composition}.
\end{proof}

\section{Holomorphy to the Besov space and the characterization of $T_p$}\label{8}

For a Weil--Petersson embedding $\gamma:\mathbb R \to \mathbb C$,
$\log \gamma'$ belongs to the Besov space by Corollary \ref{main-b}.
In this section, we first prove that this correspondence is holomorphic in the Bers coordinates.
Then, using this fact, we extend Theorem \ref{main1} to the complete characterization of
$T_p$ in terms of the real Besov space ${\rm Re}\,\widehat{B}_p(\mathbb R)$ (Theorem \ref{theorem10}).

\begin{lemma}\label{holomorphic}
The map $\Lambda:T_p^+ \times T_p^- \to \widehat{B}_p(\mathbb R)$ defined by
$([\mu^+],[\mu^-]) \mapsto \log \gamma'$ for $\gamma=\gamma([\mu^+],[\mu^-])$
is a holomorphic injection for $p \geq 1$.
\end{lemma}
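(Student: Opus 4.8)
The plan is to establish the two assertions of Lemma~\ref{holomorphic} separately: injectivity, which follows almost formally from results already in hand, and holomorphy, which is the substantive part. For injectivity, I would argue that $\Lambda$ factors through the Bers coordinates in a way that already separates points. If $\Lambda([\mu^+],[\mu^-]) = \Lambda([\nu^+],[\nu^-])$, then $\log \gamma' = \log \tilde\gamma'$ as elements of $\widehat B_p(\mathbb R)$ modulo constants, so $\gamma$ and $\tilde\gamma$ differ by an affine map after integration; the normalization fixing $0,1,\infty$ then forces $\gamma = \tilde\gamma$ as embeddings of $\mathbb R$. By the parametrization proposition stating that $G(\mu^+,\mu^-)|_{\mathbb R}$ determines $([\mu^+],[\mu^-])$ uniquely, the two coordinate pairs coincide.

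For holomorphy, the natural strategy is to exploit the product structure and the parallel translations $R_{[\nu]}$. First I would reduce to holomorphy in each factor separately: since $T_p^+ \times T_p^-$ carries the product complex structure and $\Lambda$ is defined on a product, it suffices (by Hartogs-type reasoning in the Banach setting, or by checking Gâteaux holomorphy along each factor together with local boundedness) to show that for fixed $[\mu^-]$ the map $[\mu^+] \mapsto \log \gamma'$ is holomorphic, and symmetrically. Then I would use the composition decomposition from the proof of Corollary~\ref{main-b},
$$
\gamma([\mu^+],[\mu^-]) = \gamma([0],[\mu^-]\ast[\overline{\mu^+}]^{-1}) \circ \gamma([\mu^+],[\overline{\mu^+}]),
$$
to write $\log \gamma' = \log \gamma_1' \circ \gamma_2 + \log \gamma_2'$, where $\gamma_2 = H(\mu^+)|_{\mathbb R} \in T_p$ depends on $[\mu^+]$ alone and $\gamma_1$ is a pure conformal-welding piece. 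The term $\log \gamma_2'$ is $\beta([\mu^+])$ traced to $\mathbb R$, hence holomorphic in $[\mu^+]$ by the pre-Bers embedding Proposition~\ref{pre-Bers} together with the trace isomorphism of Proposition~\ref{trace}; likewise $\log \gamma_1'$ depends holomorphically on $[\mu^-]\ast[\overline{\mu^+}]^{-1}$.

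The main obstacle will be the composition term $\log \gamma_1' \circ \gamma_2 = C_{\gamma_2}(\log \gamma_1')$, which mixes the two variables through the composition operator. Here I would argue that the assignment $([\mu^+],[\mu^-]) \mapsto C_{h^+}(\Phi([\mu^+],[\mu^-]))$ is holomorphic because it is the composite of three holomorphic ingredients: the dependence of $\log \gamma_1'$ on the coordinates (via $\beta$ and the group operations, which are biholomorphic by Proposition~\ref{group}), the dependence of the composition operator $C_{h^+}$ on $[\mu^+]$, and the evaluation of a bounded bilinear-type map. The delicate point is joint holomorphy of the map $([\mu^+], \Phi) \mapsto C_{h^+}(\Phi)$ into $\widehat B_p(\mathbb R)$; for this I would establish local boundedness of the operator norm $\Vert C_{h^+} \Vert$ in terms of the quasisymmetry constant (Proposition~\ref{p>1}) or $\Vert \log (h^+)' \Vert_{\dot W^1_1}$ (Theorem~\ref{composition}), then verify weak (Gâteaux) holomorphy by testing against continuous linear functionals and pairing with the known holomorphic dependence of $h^+$ on $[\mu^+]$, invoking the standard principle that a locally bounded, weakly holomorphic map between complex Banach spaces is holomorphic. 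Combining the holomorphy of all three summands and the reduction to separate factors yields the holomorphy of $\Lambda$.
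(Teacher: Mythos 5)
Your injectivity argument is fine and matches the paper's (normalization recovers $\gamma$ from $\log\gamma'$, then the parametrization proposition for $G(\mu^+,\mu^-)|_{\mathbb R}$ separates the coordinates), and the Hartogs reduction to separate holomorphy is also the paper's first step. But your execution of the separate holomorphy contains a genuine error: you fix $[\mu^-]$ and vary $[\mu^+]$, yet you use the decomposition $\gamma=\gamma([0],[\mu^-]\ast[\overline{\mu^+}]^{-1})\circ\gamma([\mu^+],[\overline{\mu^+}])$, in which \emph{both} factors depend on the varying variable $[\mu^+]$ non-holomorphically. The second factor is $\gamma_2=H(\mu^+)|_{\mathbb R}=h^+$, the quasisymmetric self-homeomorphism of $\mathbb R$, not a conformal piece; so your claim that $\log\gamma_2'$ ``is $\beta([\mu^+])$ traced to $\mathbb R$'' is false. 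In fact $\log (h^+)'$ is real-valued, so $[\mu^+]\mapsto\log (h^+)'$ takes values in ${\rm Re}\,\widehat B_p(\mathbb R)$ and is only real-analytic (this is precisely Corollary \ref{structure}); a non-constant holomorphic map cannot take values in a real subspace. Likewise $\gamma_1$ depends on $[\mu^+]$ only through the reflection $[\overline{\mu^+}]$, which is not holomorphic in $[\mu^+]$, and the operator $C_{h^+}$ depends on $[\mu^+]$ at best real-analytically --- the paper's remark after Lemma \ref{strong} even notes that operator-norm continuity of $h\mapsto C_h$ is unknown. Consequently your final step, deducing joint holomorphy of $([\mu^+],\Phi)\mapsto C_{h^+}(\Phi)$ from local boundedness plus weak holomorphy, cannot get started: weak holomorphy in $[\mu^+]$ already fails, because $h^+$ itself does not vary holomorphically with $[\mu^+]$.

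The repair is to swap the roles of the variables, which is exactly what the paper does. Fix $[\mu_0^+]$ and vary $[\mu^-]$: then in the same decomposition the quasisymmetric factor $h_0=\pi(\mu_0^+)$, the operator $C_{h_0}$, and the additive term $\log h_0'$ are all \emph{constants}, and the variable enters only through the chain
$$
[\mu^-]\ \longmapsto\ r^{-1}_{[\bar\mu_0^+]}([\mu^-])\ \longmapsto\ E^+\bigl(\beta^-(r^{-1}_{[\bar\mu_0^+]}([\mu^-]))\bigr),
$$
namely a biholomorphic right translation (Proposition \ref{group}), the pre-Bers embedding (Proposition \ref{pre-Bers}), and a fixed bounded linear trace operator (Proposition \ref{trace}), followed by the fixed continuous affine map $Q_{h_0}(\phi)=C_{h_0}(\phi)+\log h_0'$. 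This yields $\Lambda([\mu_0^+],\,\cdot\,)=Q_{h_0}\circ\Lambda([0],r^{-1}_{[\bar\mu_0^+]}(\,\cdot\,))$, manifestly holomorphic in $[\mu^-]$. For the other case (fixed $[\mu^-]$, varying $[\mu^+]$) one uses the mirror decomposition $\gamma([\mu^+],[\mu^-])=\gamma([\mu^+]\ast[\overline{\mu^-}]^{-1},[0])\circ\gamma([\overline{\mu^-}],[\mu^-])$, so that again all non-holomorphic dependence sits in the frozen variable and the varying one passes only through a right translation and the pre-Bers embedding $\beta^+$.
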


\begin{proof}
By the Hartogs theorem for Banach spaces (see \cite[\S 14.27]{Ch}), to see that $\Lambda$ is holomorphic
it suffices to show that $\Lambda$ is separately holomorphic.
Namely, we fix, say $[\mu^+_0] \in T_p^+$, and prove that $\Lambda([\mu^+_0],[\mu^-])$ is holomorphic in 
$[\mu^-] \in T_p^-$. The other case is treated in the same way.

Let $h_0=\pi(\mu^+_0) \in T_p$ be the quasisymmetric homeomorphism of $\mathbb R$, and $C_{h_0}$ 
the composition operator on $\widehat{B}_p(\mathbb R)$ induced by $h_0$. We define
the affine translation $Q_{h_0}$ on $\widehat{B}_p(\mathbb R)$ by $Q_{h_0}(\phi)=C_{h_0}(\phi)+\log h_0'$.
Then, 
\begin{equation}\label{translation}
\Lambda \circ R_{[\mu^+_0]} = Q_{h_0} \circ \Lambda 
\end{equation}
holds (see \cite[Proposition 4.1]{WM-4}), and this relation yields
a useful representation
\begin{equation}
\Lambda([\mu^+_0],\,\cdot \,)=Q_{h_0} \circ \Lambda([0],r_{[\,\bar{\mu}^+_0\,]}^{-1}(\,\cdot \,)) .
\end{equation}
Here, 
$\Lambda([0],\,\cdot \,)$ is regarded as the trace operator in Proposition \ref{trace} if
we compose the pre-Bers embedding $\beta^-:T_p^- \to \widehat{\mathcal B}_p(\mathbb H^+)$, that is,
$$
\Lambda([0],[\mu])=E^+(\beta^-([\mu])) \quad ([\mu] \in T_p^-).
$$
Since $E^+$ is a bounded linear operator and $r_{[\,\bar{\mu}^+_0\,]}^{-1}$ is holomorphic, we conclude that
$\Lambda([\mu^+_0],\,\cdot \,)$ is holomorphic.

Suppose that $\Lambda([\mu_1^+],[\mu_1^-])=\Lambda([\mu_2^+],[\mu_2^-])$. Then,
$\gamma([\mu_1^+],[\mu_1^-])=\gamma([\mu_2^+],[\mu_2^-])$ by the normalization.
This implies that $[\mu_1^+]=[\mu_2^+]$ and $[\mu_1^-]=[\mu_2^-]$, which can be verified
by the same proof as that of \cite[Proposition 4.1]{WM-2}.
Hence, $\Lambda$ is injective.
\end{proof}

We utilize a certain smaller subspace of $T_p$ to prove the converse of
Theorem \ref{main1}. Let $M_0(\mathbb H)$ denote the set of all Beltrami coefficients on
$\mathbb H$ with compact support, and define $T_0=\pi(M_0(\mathbb H))$. Then, 
$T_0$ is a dense subset of $T_p$ for every $p \geq 1$. This is because
for every $[\mu] \in T_p$ there exists a representative $\nu \in M_p(\mathbb H)$ of $[\mu]$ such that
the cut-off sequence $\nu 1_{W_n} \in M_0(\mathbb H)$ 
for an exhaustion of $\mathbb H$
by compact subsets $W_n$ converges to $\nu$ in $M_p(\mathbb H)$ in the norm 
$\Vert \cdot \Vert_p+\Vert \cdot \Vert_\infty$. 
The appropriate representative $\nu$ can be constructed by a finite composition of
quasiconformal self-homeomorphisms of $\mathbb H$ given by the Ahlfors--Weill section
as in \cite[Lemma 3.4]{WM-1}.

A quasisymmetric homeomorphism $h:\mathbb R \to \mathbb R$ in the universal Teich\-m\"ul\-ler space $T$ belongs to $T_0$ if and only if
$h$ is a real-analytic self-diffeomorphism of $\widehat {\mathbb R}=\mathbb R \cup \{\infty\}$, where
the regularity at $\infty$ is given by that of $h(1/x)$ at $x=0$. Moreover, we see that a holomorphic function
$\Phi \in \beta(T)$ on $\mathbb H$ belongs to $\beta(T_0)$ if and only if $\Phi$ extends to
$\widehat {\mathbb R}$ analytically.
We can easily generalize this property as follows.

\begin{proposition}\label{real-analytic}
A pair
$([\mu^+],[\mu^-]) \in T^+ \times T^-$ belongs to $T_0^+ \times T_0^-$ if and only if $\gamma=\gamma([\mu^+],[\mu^-])$ is a
real-analytic diffeomorphism of $\widehat {\mathbb R}$ into the Riemann sphere $\widehat {\mathbb C}$.
In this case, $\phi=\log \gamma'$ is a real-analytic function on $\widehat{\mathbb R}$. 
\end{proposition}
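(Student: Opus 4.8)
The plan is to reduce this two-variable statement to the single-factor characterization of $T_0$ recalled immediately above, by passing through the conformal (Riemann) maps onto the two complementary domains of the image curve. I would prove the two implications separately and obtain the assertion on $\log\gamma'$ along the way.

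For the forward implication, suppose $([\mu^+],[\mu^-]) \in T_0^+\times T_0^-$ and choose compactly supported representatives $\mu^\pm \in M_0(\mathbb H^\pm)$. Since a compact subset of the open half-plane $\mathbb H^\pm$ is bounded and bounded away from $\mathbb R$, it keeps a positive spherical distance from $\widehat{\mathbb R}=\mathbb R\cup\{\infty\}$; hence the complex dilatation of $G=G(\mu^+,\mu^-)$ vanishes on an open neighborhood $U$ of $\widehat{\mathbb R}$ in $\widehat{\mathbb C}$. By the regularity of solutions of the Beltrami equation (Weyl's lemma), $G$ is conformal on $U$, so $\gamma=G|_{\widehat{\mathbb R}}$ is the restriction of a conformal map. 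Consequently $\gamma$ is a real-analytic diffeomorphism of $\widehat{\mathbb R}$ into $\widehat{\mathbb C}$ with $\gamma'=G'|_{\widehat{\mathbb R}}\neq 0$, and therefore $\log\gamma'$ is real-analytic on $\widehat{\mathbb R}$. This settles both the forward direction and the final sentence of the statement.

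For the converse, suppose $\gamma$ is a real-analytic diffeomorphism of $\widehat{\mathbb R}$ into $\widehat{\mathbb C}$. Then $\Gamma=\gamma(\widehat{\mathbb R})=G(\widehat{\mathbb R})$ is a real-analytic Jordan curve (a quasicircle) on the sphere, bounding the two complementary Jordan domains $\Omega^\pm=G(\mathbb H^\pm)$. Let $R^\pm:\mathbb H^\pm\to\Omega^\pm$ be conformal maps and set $r^\pm=R^\pm|_{\widehat{\mathbb R}}$. Since $R^\pm$ is conformal, the self-map $(R^\pm)^{-1}\circ G|_{\mathbb H^\pm}$ of $\mathbb H^\pm$ has complex dilatation $\mu^\pm$, so it differs from the normalized representative $H(\mu^\pm)$ only by post-composition with a M\"obius transformation of $\mathbb H^\pm$; passing to the boundary, $h^\pm=(r^\pm)^{-1}\circ\gamma$ agrees with $\pi(\mu^\pm)$ up to a M\"obius factor.

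The crux is to show that $r^\pm$ is real-analytic with non-vanishing derivative. This rests on the classical fact that a conformal map of a half-plane onto a Jordan domain bounded by a regular real-analytic curve extends holomorphically and injectively across its boundary. Granting this, $h^\pm=(r^\pm)^{-1}\circ\gamma$ is a composition of real-analytic diffeomorphisms, hence a real-analytic self-diffeomorphism of $\widehat{\mathbb R}$; since this property is unaffected by the M\"obius normalization factor, the single-factor characterization of $T_0$ yields $[\mu^\pm]\in T_0^\pm$, that is, $([\mu^+],[\mu^-])\in T_0^+\times T_0^-$. I expect the main obstacle to be the uniform justification of this analytic continuation across all of $\widehat{\mathbb R}$, including at the point $\gamma(\infty)=\infty$, where one must track the real-analytic structure of the sphere in the chart at infinity.
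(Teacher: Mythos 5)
Your proof is correct and takes essentially the approach the paper intends: the paper gives no explicit proof (it states the proposition as an ``easy generalization'' of the single-factor characterization of $T_0$ by real-analytic self-diffeomorphisms of $\widehat{\mathbb R}$ recalled just before it), and your argument --- compactly supported representatives plus Weyl's lemma for the forward direction, and Riemann maps extended by Schwarz reflection across the real-analytic image curve to reduce the converse to that single-factor characterization --- is precisely the reduction being invoked. Your handling of the M\"obius normalization ambiguity and of the chart at $\infty$ is sound, so there is no gap to report.
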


Let $B_0(\mathbb R)$ be the complex linear subspace of ${\rm BMO}(\mathbb R)$ consisting of all
complex-valued real-analytic functions on $\widehat {\mathbb R}$. Then, $B_0(\mathbb R) \subset \widehat{B}_p(\mathbb R)$
for every $p \geq 1$. 
Since $T_0$ is dense in $T_p$, $T_0^+ \times T_0^-$ is also dense in $T_p^+ \times T_p^-$.
Then, by Lemma \ref{holomorphic} and Proposition \ref{real-analytic}, if we have that 
$\Lambda:T_p^+ \times T_p^- \to \widehat{B}_p(\mathbb R)$ is surjective near the origin,
we see that $B_0(\mathbb R)$ is a dense subspace of $\widehat{B}_p(\mathbb R)$.

We denote the tangent space of $T_p$ at $[\mu]$ by ${\mathscr T}_{[\mu]}T_p$.
The tangent space of $T_p^+ \times T_p^-$ at $([\mu^+],[\mu^-])$ is represented by the direct sum
\begin{equation}\label{tangentspace}
{\mathscr T}_{([\mu^+],[\mu^-])}(T_p^+ \times T_p^-)={\mathscr T}_{[\mu^+]}T_p^+ \oplus {\mathscr T}_{[\mu^-]}T_p^-.
\end{equation}
By the identification $T_p^+ \cong \beta(T_p^+) \subset \widehat{\mathcal B}_p(\mathbb H^-)$
and $T_p^- \cong \beta(T_p^-) \subset \widehat{\mathcal B}_p(\mathbb H^+)$ under the pre-Bers embedding by Proposition \ref{pre-Bers},
we may assume that ${\mathscr T}_{[\mu^+]}T_p^+ \cong \widehat{\mathcal B}_p(\mathbb H^-)$ and
${\mathscr T}_{[\mu^-]}T_p^- \cong \widehat{\mathcal B}_p(\mathbb H^+)$.
Then, 
the derivative $d_{([\mu^+],[\mu^-])} \Lambda$ of $\Lambda$ at $([\mu^+],[\mu^-])$ is regarded as
the linear mapping
$$
d_{([\mu^+],[\mu^-])}\, \Lambda:\widehat{\mathcal B}_p(\mathbb H^-) \oplus \widehat{\mathcal B}_p(\mathbb H^+)
\to \widehat B_p(\mathbb R)
=\widehat {\mathcal B}_p(\mathbb H^-) \oplus \widehat {\mathcal B}_p(\mathbb H^+)
$$
taking the direct product decomposition as in Proposition \ref{decomposition} into account.

The derivative $d_{([0],[0])}\,\Lambda$ at the origin can be easily understood.
By checking that the restriction of $\Lambda$ to $T_p^+$ and $T_p^-$ coincides with   
the pre-Bers embeddings
$$
\Lambda|_{T_p^+ \times \{[0]\}}=\beta^+:T_p^+ \to \widehat {\mathcal B}_p(\mathbb H^-),\quad
\Lambda|_{\{[0]\} \times T_p^-}=\beta^-:T_p^- \to \widehat {\mathcal B}_p(\mathbb H^+),
$$
we see that the linearization $d_{([0],[0])}\, \Lambda$ is the identity map of $\widehat{\mathcal B}_p(\mathbb H^-) \oplus \widehat{\mathcal B}_p(\mathbb H^+)$.
This implies the following claim by the inverse mapping theorem (see \cite[\S 7.18]{Ch}). 

\begin{proposition}\label{derivative0}
The derivative $d_{([0],[0])}\, \Lambda$ is surjective, and hence $\Lambda^{-1}$ is holomorphic in
some neighborhood $U$ of $0$ in 
$\widehat{B}_p(\mathbb R)$.
\end{proposition}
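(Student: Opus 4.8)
The plan is to deduce both assertions directly from the holomorphic inverse function theorem for maps between complex Banach spaces, using the identification of the linearization $d_{([0],[0])}\,\Lambda$ recorded just above. Concretely, in the pre-Bers coordinates the source $T_p^+ \times T_p^-$ is an open subset of the complex Banach space $\widehat{\mathcal B}_p(\mathbb H^-) \oplus \widehat{\mathcal B}_p(\mathbb H^+)$, which by Proposition \ref{decomposition} is canonically the same Banach space as the target $\widehat B_p(\mathbb R)$. Thus $\Lambda$ may be regarded as a holomorphic map from an open neighborhood of $0$ in $\widehat B_p(\mathbb R)$ into $\widehat B_p(\mathbb R)$, with $\Lambda([0],[0]) = \log(\mathrm{id}_{\mathbb R})' = 0$.

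The first step is the surjectivity of the derivative. Since the restrictions $\Lambda|_{T_p^+ \times \{[0]\}}$ and $\Lambda|_{\{[0]\} \times T_p^-}$ coincide with the pre-Bers embeddings $\beta^+$ and $\beta^-$, and since the tangent spaces $\mathscr{T}_{[0]}T_p^\pm$ are identified with $\widehat{\mathcal B}_p(\mathbb H^\mp)$ through these same embeddings, the derivative of each restriction at the origin is the identity of the corresponding summand. Taking the direct-sum decomposition of the target into account, $d_{([0],[0])}\,\Lambda$ is the identity map of $\widehat{\mathcal B}_p(\mathbb H^-) \oplus \widehat{\mathcal B}_p(\mathbb H^+)$. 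An identity map is in particular surjective and a bounded linear isomorphism with bounded inverse.

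The second step applies \cite[\S 7.18]{Ch}: a holomorphic map between complex Banach spaces whose Fr\'echet derivative at a point is a bounded linear isomorphism restricts to a biholomorphism between suitable neighborhoods of that point and of its image. Invoking this at the origin produces a neighborhood $U$ of $\Lambda([0],[0]) = 0$ in $\widehat B_p(\mathbb R)$ on which the local inverse $\Lambda^{-1}$ is defined and holomorphic, which is precisely the claim.

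There is no essential obstacle, as the analytic substance has already been placed in Lemma \ref{holomorphic} (holomorphy of $\Lambda$) and in the identification of the linearization. The only point demanding care is bookkeeping: one must keep three identifications mutually consistent, namely the pre-Bers chart on each factor $T_p^\pm$, the direct-sum decomposition of $\widehat B_p(\mathbb R)$ from Proposition \ref{decomposition}, and the trace identification of $\widehat{\mathcal B}_p(\mathbb H^\pm)$ with closed subspaces of $\widehat B_p(\mathbb R)$, so that $d_{([0],[0])}\,\Lambda$ is literally the identity of one fixed Banach space rather than merely an abstract isomorphism. Once this is verified, surjectivity and the existence of a holomorphic local inverse follow at once.
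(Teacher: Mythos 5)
Your proposal is correct and follows essentially the same route as the paper: identify the restrictions $\Lambda|_{T_p^+\times\{[0]\}}$ and $\Lambda|_{\{[0]\}\times T_p^-}$ with the pre-Bers embeddings $\beta^\pm$, conclude that $d_{([0],[0])}\,\Lambda$ is the identity of $\widehat{\mathcal B}_p(\mathbb H^-)\oplus\widehat{\mathcal B}_p(\mathbb H^+)$ under the decomposition of Proposition \ref{decomposition}, and invoke the inverse mapping theorem of \cite[\S 7.18]{Ch}. Your added remark about keeping the three identifications consistent is exactly the bookkeeping the paper performs implicitly, so nothing is missing.
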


Since $U$ is in the image of $\Lambda$,
we also obtain that $B_0(\mathbb R)$ is dense in $\widehat{B}_p(\mathbb R)$
as mentioned after Proposition \ref{real-analytic}.

Theorem \ref{main1} implies that $\Lambda([\mu], [\bar \mu]) \in {\rm Re}\, \widehat{B}_p(\mathbb R)$ for every
$([\mu], [\bar \mu]) \in {\rm Sym}\,(T_p^+ \times T_p^-)$. The converse of this claim also holds.

\begin{lemma}\label{converse}
For every $\phi \in {\rm Re}\, \widehat{B}_p(\mathbb R)$, there exists $([\mu], [\bar \mu]) \in {\rm Sym}\,(T_p^+ \times T_p^-)$
such that $\Lambda([\mu], [\bar \mu])=\phi$.
\end{lemma}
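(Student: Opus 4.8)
The plan is to recast the assertion as the surjectivity of the real-analytic map $\Psi=\Lambda\circ\iota\colon T_p\to\mathrm{Re}\,\widehat B_p(\mathbb R)$, where $\iota([\mu])=([\mu],[\bar\mu])$ parametrizes the axis of symmetry; that $\Psi$ indeed takes values in $\mathrm{Re}\,\widehat B_p(\mathbb R)$ is Theorem \ref{main1}. I would prove surjectivity by showing that the image $\mathcal S=\Psi(T_p)$ is nonempty, open and closed in the connected real Banach space $\mathrm{Re}\,\widehat B_p(\mathbb R)$.

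For \emph{openness}, I first analyze $\Psi$ near the origin. Since $\iota$ is a real-analytic embedding and $\Lambda$ is holomorphic (Lemma \ref{holomorphic}), $\Psi$ is real-analytic, and its derivative at $[0]$ is read off from $d_{([0],[0])}\Lambda=\mathrm{id}$ together with the pre-Bers identification $\mathscr T_{[0]}T_p\cong\widehat{\mathcal B}_p(\mathbb H^-)$ (Proposition \ref{pre-Bers}). Concretely, a tangent vector to the symmetry axis corresponds to a pair $(\Phi,\bar\Phi)\in\widehat{\mathcal B}_p(\mathbb H^-)\oplus\widehat{\mathcal B}_p(\mathbb H^+)$, and $d_{[0]}\Psi$ sends it to the boundary sum $\Phi+\bar\Phi$; using that $P^+\phi=\overline{P^-\phi}$ for real $\phi$ (the Hilbert-transform kernel being real) together with Proposition \ref{decomposition}, this is a Banach-space isomorphism $\widehat{\mathcal B}_p(\mathbb H^-)\xrightarrow{\sim}\mathrm{Re}\,\widehat B_p(\mathbb R)$ with inverse $P^-$. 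The inverse function theorem then makes $\Psi$ a homeomorphism of a neighborhood of $[0]$ onto a neighborhood $V$ of $0$, so $\Psi^{-1}$ is continuous at $0$. To spread this over all of $\mathcal S$ I use equivariance: from $\iota\circ r_{[\nu]}=R_{[\nu]}\circ\iota$ and \eqref{translation} one gets $\Psi\circ r_{[\nu]}=Q_{[\nu]}\circ\Psi$, where $Q_{[\nu]}(\phi)=C_{[\nu]}(\phi)+\log[\nu]'$ is an affine homeomorphism of $\mathrm{Re}\,\widehat B_p(\mathbb R)$. Hence about any $\phi_0=\Psi([\nu])\in\mathcal S$ the set $Q_{[\nu]}(V)\subset\mathcal S$ is an open neighborhood, and $\mathcal S$ is open.

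The main obstacle is \emph{closedness}, where the distortion of $Q_{[\nu]}$ by the composition operator must be controlled uniformly. The key input is that a bound $\Vert\log h'\Vert_{\widehat B_p}\le R$ forces $\Vert C_h\Vert$ and $\Vert C_{h^{-1}}\Vert$ to be dominated by some $K=K(R)$: for $p=1$ this follows from $\widehat B_1(\mathbb R)\subset\dot W^1_1(\mathbb R)$, Theorem \ref{composition}, and the change-of-variables identity $\Vert\log(h^{-1})'\Vert_{\dot W^1_1}=\Vert\log h'\Vert_{\dot W^1_1}$; for $p>1$ it follows because $\Vert\log h'\Vert_{\mathrm{BMO}}\le\Vert\log h'\Vert_{\widehat B_p}$ controls the quasisymmetry constants of $h$ and of $h^{-1}$, to which $\Vert C_h\Vert$ is subordinate by Proposition \ref{p>1}. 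Granting this, let $\phi_n=\Psi([\mu_n])\to\phi$; the sequence is bounded, so $\Vert C_{[\mu_m]^{-1}}\Vert\le K$ uniformly. Since $Q_{[\mu_m]^{-1}}(\phi_m)=\Psi([\mu_m]\ast[\mu_m]^{-1})=0$, I obtain $Q_{[\mu_m]^{-1}}(\phi_n)=C_{[\mu_m]^{-1}}(\phi_n-\phi_m)$, of norm at most $K\Vert\phi_n-\phi_m\Vert$. For $n,m$ large this lies in $V$ and equals $\Psi([\mu_n]\ast[\mu_m]^{-1})$, so the local inverse from the previous step gives $[\mu_n]\ast[\mu_m]^{-1}\to[0]$ in $T_p$. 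By the right-invariance of the complete Weil--Petersson distance (the right translations being isometric automorphisms, cf. Proposition \ref{group}), $\{[\mu_n]\}$ is Cauchy and thus converges to some $[\mu_\infty]\in T_p$; continuity of $\Psi$ then yields $\phi=\Psi([\mu_\infty])\in\mathcal S$. Therefore $\mathcal S$ is closed.

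Since $\mathrm{Re}\,\widehat B_p(\mathbb R)$ is connected and $\mathcal S$ is a nonempty clopen subset, $\mathcal S=\mathrm{Re}\,\widehat B_p(\mathbb R)$, which is exactly the claimed surjectivity: every $\phi$ equals $\Psi([\mu])=\Lambda([\mu],[\bar\mu])$ for some $([\mu],[\bar\mu])\in\mathrm{Sym}(T_p^+\times T_p^-)$. I expect the delicate point to be the closedness step; everything there rests on the uniform operator-norm bound for $C_{[\mu_m]^{-1}}$ (so that $Q_{[\mu_m]^{-1}}$ does not expand the small differences $\phi_n-\phi_m$), combined with completeness and right-invariance of the metric on $T_p$.
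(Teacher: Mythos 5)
Your route is genuinely different from the paper's. The paper solves the problem exactly on the dense subset ${\rm Re}\,B_0(\mathbb R)$ of real-analytic functions, which Proposition \ref{real-analytic} identifies with ${\rm Sym}\,(T_0^+ \times T_0^-)$, and then transports the local inverse at the origin (Proposition \ref{derivative0}) to a neighborhood of each such $\phi_0$ by the affine map $Q_{h_0}$ via \eqref{translation}. You instead run an open-and-closed connectedness argument for $\mathcal S=\Psi(T_p)$, $\Psi=\Lambda\circ\iota$, inside the connected space ${\rm Re}\,\widehat B_p(\mathbb R)$, with no density input. Your openness step is correct: $d_{[0]}\Psi(\Phi)=\Phi+\bar\Phi$ is a real Banach isomorphism onto ${\rm Re}\,\widehat B_p(\mathbb R)$ with inverse $P^-$, and the equivariance $\Psi\circ r_{[\nu]}=Q_{\pi(\nu)}\circ\Psi$ propagates the local chart over $\mathcal S$; these are exactly the ingredients the paper also uses.

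There are, however, two genuine problems in the closedness step. First, for $p>1$ your key uniform bound rests on a false principle: the BMO norm of $\log h'$ does not control the quasisymmetry constant of $h$, not even for $h\in T_p$. Smoothed, compactly supported truncations of $h'(x)=|x|^{a}$ (say $h'=\max(|x|,1/n)^{a}$ on $[-1,1]$ and $\equiv 1$ outside) belong to $T_p$ for all $p$, have $\Vert\log h'\Vert_{\rm BMO}$ bounded by a multiple of $\Vert\log|x|\,\Vert_{\rm BMO}$ uniformly, yet their doubling constants blow up like $(1+a)^{-1}$ as $a\to-1^+$. (Their $\widehat B_p$-norms blow up as well, so this family does not contradict the Besov-level bound you actually need, but it kills the BMO-level justification.) What your argument requires is that $\Vert\log h'\Vert_{\widehat B_p}\le R$ forces $\Vert C_{h^{\pm 1}}\Vert\le K(R)$; for $p>1$ this is a nontrivial quantitative statement --- essentially the quantitative form of the `if' direction supplied by the heat-kernel Beurling--Ahlfors extension of \cite{WM-0}, \cite{WM-3}, which you would have to cite --- and it does not follow from Proposition \ref{p>1} plus a BMO bound. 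Your $p=1$ justification, via Theorem \ref{composition}, Lemma \ref{W1}, and the identity $\Vert\log(h^{-1})'\Vert_{\dot W^1_1}=\Vert\log h'\Vert_{\dot W^1_1}$, is correct, and $p=1$ is the genuinely new case here. Second, you invoke completeness and right-invariance of the Weil--Petersson distance, citing Proposition \ref{group}; but that proposition gives only the topological group structure and the biholomorphy of right translations --- neither completeness of the distance nor isometry of $r_{[\nu]}$ is established anywhere in the paper, and these are not free facts. Fortunately this detour is unnecessary: once $\Vert C_{h_m^{-1}}\Vert\le K$ uniformly and $\phi_m\to\phi$, choose $m$ with $K\Vert\phi-\phi_m\Vert$ below the radius of $V$; then $C_{h_m^{-1}}(\phi-\phi_m)\in V\subset\mathcal S$, say $C_{h_m^{-1}}(\phi-\phi_m)=\Psi([\lambda])$, and directly $\phi=Q_{h_m}\bigl(C_{h_m^{-1}}(\phi-\phi_m)\bigr)=\Psi([\lambda]\ast[\mu_m])\in\mathcal S$, with no Cauchy sequences or completeness at all. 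With these two repairs --- citing the quantitative $p>1$ input instead of deriving it from BMO, and finishing closedness through the group action rather than a complete invariant metric --- your connectedness argument becomes a valid alternative to the paper's density argument.
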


\begin{proof}
We take any $\phi_0$ from ${\rm Re}\,B_0(\mathbb R)$, which is dense in ${\rm Re}\, \widehat{B}_p(\mathbb R)$.
Then, 
$\gamma_0(x)=\int_0^x \exp{\phi_0(t)}dt$ is a real-analytic self-diffeomorphism of $\widehat{\mathbb R}$, and
by Proposition \ref{real-analytic}, we can find $([\mu_0], [\bar \mu_0]) \in{\rm Sym}\,(T_0^+ \times T_0^-)$
such that $\Lambda([\mu_0], [\bar \mu_0])=\phi_0$. By proposition \ref{derivative0}, there exists 
a neighborhood $U$ of $0 \in \widehat{B}_p(\mathbb R)$ that
is contained in the image of $\Lambda$.
Then, by relation \eqref{translation}, $R_{[\mu_0]} \circ \Lambda^{-1}|_U \circ Q_{h_0}^{-1}$ for $h_0=\pi(\mu_0)$ gives
the local holomorphic inverse of $\Lambda$ on the neighborhood $Q_{h_0}(U)$ of $\phi_0$. In particular, 
we see that $\Lambda^{-1}(\phi)$ belongs to ${\rm Sym}\,(T_p^+ \times T_p^-)$ for every 
$\phi \in {\rm Re}\, \widehat{B}_p(\mathbb R) \cap Q_{h_0}(U)$.
\end{proof}

\begin{remark}
For $p > 1$, the proof can also be demonstrated by applying the variant of the Beurling--Ahlfors 
quasiconformal extension,
which replaces the convolution kernel in the original definition with a Gaussian function (heat kernel) (see \cite{FKP}, \cite{WM-0}), as shown in \cite{WM-3}. In fact, this yields a
holomorphic right inverse of $\Lambda \circ(\pi^+ \times \pi^-)$ from 
a neighborhood of ${\rm Re}\, {B}_p(\mathbb R)$ in ${B}_p(\mathbb R)$ to $M_p(\mathbb H^+) \times M_p(\mathbb H^-)$.
\end{remark}

Therefore, combined with Lemma \ref{converse},
Theorem \ref{main1} is improved to the complete characterization of $T_p$ 
in terms of the real Besov space ${\rm Re}\,\widehat B_p(\mathbb R)$ for $p \geq 1$.
Thus, we obtain Theorem \ref{theorem10}.

\section{Biholomorphic correspondence}\label{9}

We have seen that 
$p$-Weil--Petersson embeddings $\gamma$ are represented in the Bers coordinates
and the map $\Lambda$ to the Besov space via $\log \gamma'$ is a holomorphic injection.
In this section, we prove that this is in fact a biholomorphic homeomorphism onto its image.

\begin{theorem}\label{main2}
Let $\Lambda:T_p^+ \times T_p^- \to \widehat B_p(\mathbb R)$ be the holomorphic injection
given by
$$
\Lambda([\mu^+],[\mu^-])= \log \gamma' \quad (\gamma=\gamma([\mu^+],[\mu^-]))
$$
for $p \geq 1$. Then,
its image ${\rm Ran}\,\Lambda$ is a connected open subset of $\widehat B_p(\mathbb R)$ containing
${\rm Re}\,\widehat B_p(\mathbb R)$,
and $\Lambda$ is a biholomorphic homeomorphism onto ${\rm Ran}\,\Lambda$.
\end{theorem}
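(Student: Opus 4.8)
The plan is to deduce Theorem \ref{main2} from the single assertion that the bounded operator $d_{([\mu^+],[\mu^-])}\Lambda$ is a Banach-space isomorphism of $\widehat B_p(\mathbb R)=\widehat{\mathcal B}_p(\mathbb H^-)\oplus\widehat{\mathcal B}_p(\mathbb H^+)$ at every point. Indeed, once this is known, $\Lambda$ is a local biholomorphism everywhere by the inverse mapping theorem for Banach spaces (\cite[\S 7.18]{Ch}); combined with the global injectivity from Lemma \ref{holomorphic}, the local holomorphic inverses patch to a global one, so $\Lambda$ is a biholomorphic homeomorphism onto an open set. That image is connected because the domain is, and it contains $\mathrm{Re}\,\widehat B_p(\mathbb R)$ by Lemma \ref{converse} (equivalently Theorem \ref{theorem10}). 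Thus the whole weight of the proof rests on the invertibility of the derivative.

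First I would identify the derivative. Differentiating the representation $\Lambda([\mu^+_0],\,\cdot\,)=Q_{h_0}\circ E^+\circ\beta^-\circ r_{[\bar\mu^+_0]}^{-1}$ used in the proof of Lemma \ref{holomorphic}, together with its mirror image obtained by exchanging the roles of $\mathbb H^+$ and $\mathbb H^-$, and using that $\beta^\pm$ are biholomorphic onto open sets (Proposition \ref{pre-Bers}), that the right translations are biholomorphic automorphisms (Proposition \ref{group}), that $E^\pm$ are bounded embeddings (Proposition \ref{trace}), and that $Q_{h_0}$ is affine with linear part $C_{h_0}$, I obtain that the partial derivative in the $T_p^-$–direction carries the summand $\widehat{\mathcal B}_p(\mathbb H^+)$ isomorphically onto $C_{h^+}(\widehat{\mathcal B}_p(\mathbb H^+))$, and symmetrically the partial derivative in the $T_p^+$–direction carries $\widehat{\mathcal B}_p(\mathbb H^-)$ isomorphically onto $C_{h^-}(\widehat{\mathcal B}_p(\mathbb H^-))$, each $C_{h^\pm}$ being a Banach automorphism by Proposition \ref{p>1} and Theorem \ref{composition}. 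Hence $d_{([\mu^+],[\mu^-])}\Lambda$ is an isomorphism if and only if
$$
\widehat B_p(\mathbb R)=C_{h^-}(\widehat{\mathcal B}_p(\mathbb H^-))\oplus C_{h^+}(\widehat{\mathcal B}_p(\mathbb H^+))
$$
is a topological direct sum.

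To organize the verification I would first normalize the base point. By relation \eqref{translation}, $d\Lambda$ is an isomorphism at $([\mu^+],[\mu^-])$ exactly when it is one at $R_{[\mu^+]^{-1}}([\mu^+],[\mu^-])=([0],[\nu])$ with $[\nu]=[\mu^-]\ast[\bar\mu^+]^{-1}$, since $C_{h_0}$ and $dR_{[\mu^+]}$ are isomorphisms. At such a point $h^+=\mathrm{id}$, so the required decomposition collapses to
$$
\widehat B_p(\mathbb R)=C_k(\widehat{\mathcal B}_p(\mathbb H^-))\oplus\widehat{\mathcal B}_p(\mathbb H^+)\qquad(k=[\nu]\in T_p),
$$
and $[\nu]$ runs over all of $T_p$. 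Applying $C_k^{-1}$ and the Riesz projection $P^-$ of Proposition \ref{decomposition}, this decomposition is equivalent to the invertibility on $\widehat{\mathcal B}_p(\mathbb H^-)$ of the single operator $U_k:=P^-\circ C_k|_{\widehat{\mathcal B}_p(\mathbb H^-)}$. Since $U_{\mathrm{id}}=I$ and the estimates of Proposition \ref{p>1} and Theorem \ref{composition} give $\Vert C_k-I\Vert\to 0$ as $k\to\mathrm{id}$, a Neumann series yields invertibility of $U_k$, hence the decomposition, for all $k$ near the identity of $T_p$.

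The hard part will be upgrading this from a neighborhood of the identity to every $k\in T_p$. This cannot be done by a group-translation trick: the symmetry-axis translations $R_{[\nu^+]}$ preserve the axis and never move the origin to a general $([0],[\nu])$, so the bad set is a union of orbits indexed precisely by the $k$'s for which $U_k$ might fail to be invertible. I would treat this as the genuine analytic core. The set $\{k\in T_p: U_k\ \text{invertible}\}$ is open, because invertibles form an open set and $k\mapsto U_k$ is continuous in operator norm via the quantitative composition-operator bounds; it contains the identity. To close it up I would use the connectedness of $T_p$ together with two structural inputs: that $U_k$ is injective modulo constants for every $k$ — a removability statement for functions holomorphic on both sides of the Weil--Petersson quasicircle welded by $k$ — and that the holomorphic family $\{U_k\}$, anchored at the invertible $U_{\mathrm{id}}$, carries index zero, so that injectivity forces surjectivity. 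Equivalently, this is the assertion that the Cauchy integral on the Weil--Petersson curve is a bounded invertible projection, which is exactly the content to be recorded as Corollary \ref{structure}; establishing it for all of $T_p$, rather than only perturbatively near the identity, is where I expect the real difficulty to lie. Once it is in hand, $d\Lambda$ is an isomorphism everywhere and Theorem \ref{main2} follows.
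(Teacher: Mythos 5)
Your global skeleton agrees with the paper's: show that $d_{([\mu^+],[\mu^-])}\Lambda$ is invertible at every point, combine the inverse mapping theorem with the injectivity from Lemma \ref{holomorphic}, and get ${\rm Re}\,\widehat B_p(\mathbb R)\subset{\rm Ran}\,\Lambda$ from Lemma \ref{converse}; the identification ${\rm Ran}\,d\Lambda=C_{h^-}(\widehat{\mathcal B}_p(\mathbb H^-))\,\dot{+}\,C_{h^+}(\widehat{\mathcal B}_p(\mathbb H^+))$ is the paper's Proposition \ref{image}, and the normalization of the base point by $R_{[\nu]}$ and $Q_h$ via \eqref{translation} is also how the paper proceeds. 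But the analytic core of your plan---passing from the identity to all $k\in T_p$---has a genuine gap, and it is exactly the step you flag as hard. Both your Neumann-series step and your openness/index argument require that $k\mapsto C_k$ (hence $k\mapsto U_k=P^-\circ C_k|_{\widehat{\mathcal B}_p(\mathbb H^-)}$) be continuous, indeed holomorphic, in the \emph{operator norm}. Nothing available gives this: Proposition \ref{p>1} and Theorem \ref{composition} only bound $\Vert C_k\Vert$ and show $\Vert C_k\Vert\to 1$ as $\Vert\log k'\Vert_{\dot W^1_1}\to 0$, which does not imply $\Vert C_k-I\Vert\to 0$ (compare translations on $L_2$: each has norm $1$, yet stays at operator-norm distance from the identity). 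The paper is explicit about this obstruction: Lemma \ref{strong} yields only \emph{strong} convergence of composition operators, and the remark immediately after it states that operator-norm convergence is not known. For the same reason the index step is unsupported: you have neither Fredholmness of $U_k$ for general $k$ nor a norm-continuous path along which an index could be propagated from $U_{\rm id}=I$. (Operator-norm holomorphy of the relevant projections is Theorem \ref{conjugateH}, which the paper proves \emph{after}, and by means of, Theorem \ref{main2}; invoking it here would be circular.)

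The paper closes this gap by a different mechanism, which is worth comparing with your normalization. Instead of translating to $([0],[\nu])$ (conformal on one side), it translates to a point of $\Lambda^{-1}(Z_p)$ with $Z_p=i\,{\rm Re}\,\widehat B_p(\mathbb R)\cap{\rm Ran}\,\Lambda$, i.e.\ to an arc-length parametrized curve, using Lemma \ref{converse} to solve $\Lambda([\nu],[\bar\nu])={\rm Re}\,\phi$. At such a point it imports two external inputs: the biholomorphy of $\widetilde\Lambda$ on the chord-arc space $\widetilde T_C\supset T_p^+\times T_p^-$ (Proposition \ref{biholo-C}, from \cite{WM-2}), and \cite[Proposition 5.5]{WM-4}, which says that a chord-arc curve in arc-length parametrization generated by an element of $i\,{\rm Re}\,\widehat B_p(\mathbb R)$ is automatically a $p$-Weil--Petersson embedding. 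Pulling the segment $c(t)=\phi_0+tv$, $v\in i\,{\rm Re}\,\widehat B_p(\mathbb R)$, back through $\widetilde\Lambda^{-1}$ then produces a curve inside $T_p^+\times T_p^-$ whose velocity is mapped to $v$ by $d\Lambda$, so $i\,{\rm Re}\,\widehat B_p(\mathbb R)\subset{\rm Ran}\,d\Lambda$ there; finally Lemma \ref{imaginary} (a Szeg\"o-projection argument, essentially your operator $U_k$ in disguise) upgrades this real half of the directions to all of $\widehat B_p(\mathbb R)$. In short, the role you assign to a removability-plus-index argument is played in the paper by regularity theory for chord-arc curves from \cite{WM-2} and \cite{WM-4}; without that input, or without operator-norm continuity of composition operators, your perturbative scheme cannot be completed.
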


In virtue of the fact that $\Lambda$ is a holomorphic injection,
to see that the inverse $\Lambda^{-1}$ is holomorphic,
we have only to show that its derivative $d\Lambda$ is surjective by the inverse mapping theorem.
The proof for the surjectivity of the derivative 
$$
d_{([\mu^+],[\mu^-])}\Lambda: {\mathscr T}_{([\mu^+],[\mu^-])}(T_p^+ \times T_p^-) \to \widehat B_p(\mathbb R)
$$
at $([\mu^+],[\mu^-]) \in T_p^+ \times T_p^-$
involves the following steps:
\begin{itemize}
\item[(a)] 
The image of the tangent space
$
{\mathscr T}_{([\mu^+],[\mu^-])}(T_p^+ \times T_p^-) \cong \widehat{\mathcal B}_p(\mathbb H^-) \oplus \widehat{\mathcal B}_p(\mathbb H^+)
$
under the derivative $d\Lambda$ is given as the algebraic sum
$$
{\rm Ran}\,d_{([\mu^+],[\mu^-])}\Lambda = C_{h^-}\widehat{\mathcal B}_p(\mathbb H^-) + C_{h^+}\widehat{\mathcal B}_p(\mathbb H^+),
$$
where $h^+=\pi(\mu^+)$ and $h^-=\pi(\mu^-)$.
\item[(b)] 
If $\Lambda([\mu^+],[\mu^-])$ is
on the subspace $Z_p=i\,{\rm Re}\,\widehat B_p(\mathbb R) \cap {\rm Ran}\, \Lambda$, then 
$d\Lambda$ is surjective on $\Lambda^{-1}(Z_p)$.
\item[(c)] 
For $\Lambda([\mu^+],[\mu^-]) = \log \gamma' \in Z_p$, we translate $([\mu^+],[\mu^-])$ in parallel by $R_{[\nu]}$
and move $\log \gamma'$ through the affine translation $Q_{h}$ with $\pi(\nu)=h$. 
Then, by $\Lambda \circ R_{[\nu]} = Q_{h} \circ \Lambda$ 
in \eqref{translation}, 
the surjectivity of $d\Lambda$ at any point of $T_p^+ \times T_p^-$ is demonstrated.
\end{itemize}

We show these steps
by partially relying on the previous result in \cite{WM-2} regarding the biholomorphy of $\Lambda$ on a certain larger space.
Let $T_B$ be the BMO Teich\-m\"ul\-ler space consisting of all normalized strongly quasisymmetric homeomorphisms 
$h:\mathbb R \to \mathbb R$ such that $\log h' \in {\rm BMO}(\mathbb R)$.
Likewise to the case of $T_p$, we can define the corresponding map 
$\widetilde \Lambda:T_B^+ \times T_B^- \to {\rm BMO}(\mathbb R)$. 
The embedding $\gamma:\mathbb R \to \mathbb C$ similarly defined by $\gamma=\gamma([\mu^+],[\mu^-])$
for $([\mu^+],[\mu^-]) \in T_B^+ \times T_B^-$ is called a {\it BMO embedding}.
Since $T_p \subset T_B$,
we have $\Lambda=\widetilde \Lambda|_{T_p^+ \times T_p^-}$. 

A simple arc $\Gamma$ in $\mathbb C$ that extends to $\infty$ in both directions 
is called a {\it chord-arc curve} 
if it is the image of $\mathbb R$ under a bi-Lipschitz homeomorphism of $\mathbb C$. 
Chord-arc curves are locally rectifiable. 
Let $\widetilde T_C$ be the open subset of $T_B^+ \times T_B^-$ consisting of BMO
embeddings $\gamma:\mathbb R \to \mathbb C$ whose images
are chord-arc curves. These embeddings are characterized by the property that
$\gamma$ is locally absolutely continuous and $|\gamma'|$ is an $A_\infty$-weight.
We note that $T_p^+ \times T_p^- \subset \widetilde T_C$.
It has been shown in \cite[Theorem 6.1]{WM-2} that $\widetilde \Lambda$ maps $\widetilde T_C$ biholomorphically onto its
image. 

\begin{proposition}\label{biholo-C}
$\widetilde \Lambda:\widetilde T_C \to {\rm BMO}(\mathbb R)$ is a biholomorphic homeomorphism onto its image.
In particular, the derivative $d\widetilde \Lambda$ is a Banach isomorphism onto ${\rm BMO}(\mathbb R)$
at every point of $\widetilde T_C$. 
\end{proposition}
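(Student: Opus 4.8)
The plan is to follow the same architecture as the proof of Theorem \ref{main2}, now with $\widetilde\Lambda$ on $\widetilde T_C$ in place of $\Lambda$ on $T_p^+ \times T_p^-$. First I would record that $\widetilde\Lambda$ is a holomorphic injection by arguments parallel to Lemma \ref{holomorphic}: separate holomorphy together with the Hartogs theorem for Banach spaces gives holomorphy, while injectivity follows from conformal welding and the normalization of $G(\mu^+,\mu^-)$, exactly as in \cite[Proposition 4.1]{WM-2}. Since $\widetilde\Lambda$ is then a holomorphic injection between open subsets of complex Banach spaces, by the inverse mapping theorem it suffices to prove that the derivative $d_{([\mu^+],[\mu^-])}\widetilde\Lambda$ is a Banach isomorphism onto ${\rm BMO}(\mathbb R)$ at every point of $\widetilde T_C$.

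Second, I would identify the derivative concretely. Under the pre-Bers identification of the tangent factors with ${\rm BMOA}(\mathbb H^-)$ and ${\rm BMOA}(\mathbb H^+)$, the computation at the origin shows that $d_{([0],[0])}\widetilde\Lambda$ is the identity map, using the BMO analogue of Proposition \ref{decomposition}, namely ${\rm BMO}(\mathbb R) = {\rm BMOA}(\mathbb H^+) \oplus {\rm BMOA}(\mathbb H^-)$, which is available from the fact that $\mathcal H$ is a Banach automorphism of ${\rm BMO}(\mathbb R)$. For a general point, the translation relation $\widetilde\Lambda \circ R_{[\nu]} = Q_{h} \circ \widetilde\Lambda$ of \eqref{translation}, now read on the BMO level, lets me conjugate; differentiating shows that $d_{([\mu^+],[\mu^-])}\widetilde\Lambda$ carries the two tangent factors onto $C_{h^-}{\rm BMOA}(\mathbb H^-)$ and $C_{h^+}{\rm BMOA}(\mathbb H^+)$ respectively, where $h^\pm = \pi(\mu^\pm)$ and the composition operators $C_{h^\pm}$ are bounded automorphisms of ${\rm BMO}(\mathbb R)$ precisely because $h^\pm$ are strongly quasisymmetric.

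Third, and this is the crux, I would establish the topological direct sum decomposition
$$
{\rm BMO}(\mathbb R) = C_{h^-}{\rm BMOA}(\mathbb H^-) \oplus C_{h^+}{\rm BMOA}(\mathbb H^+),
$$
from which the surjectivity and injectivity of the derivative, hence the isomorphism property, follow at once. This decomposition is equivalent to the boundedness on ${\rm BMO}$ of the Cauchy (Szeg\"o) projection associated with the curve $\Gamma = \gamma(\mathbb R)$, and it is exactly here that the defining hypothesis of $\widetilde T_C$ enters: $\gamma$ is locally absolutely continuous with $|\gamma'|$ an $A_\infty$-weight, equivalently $\Gamma$ is a chord-arc curve. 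I expect this to be the main obstacle, since it rests on the deep harmonic-analytic fact that the Cauchy integral is bounded on chord-arc curves (Coifman--McIntosh--Meyer, and David's characterization of curves carrying a bounded Cauchy integral). Once this decomposition is in hand, the three steps combine to show that $d\widetilde\Lambda$ is a Banach isomorphism onto ${\rm BMO}(\mathbb R)$ at every point of $\widetilde T_C$, which together with the holomorphic injectivity yields the proposition; this is the content of \cite[Theorem 6.1]{WM-2}, to which I would refer for the full argument.
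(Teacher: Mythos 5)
Your proposal is correct and ultimately rests on exactly what the paper itself rests on: the paper offers no internal proof of Proposition \ref{biholo-C}, but simply quotes it from \cite[Theorem 6.1]{WM-2}, which is also where your argument terminates. The architecture you sketch along the way (holomorphic injectivity via Hartogs and conformal welding, identification of the derivative through the translation relation \eqref{translation}, and the chord-arc direct sum decomposition of ${\rm BMO}(\mathbb R)$ coming from boundedness of the Cauchy integral on chord-arc curves) is consistent with that reference and mirrors the paper's own proof of Theorem \ref{main2} in the Besov setting.
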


Concerning item (a) in the above steps, we have the following.

\begin{proposition}\label{image}
For the holomorphic injection $\Lambda:T_p^+ \times T_p^- \to \widehat B_p(\mathbb R)$ with $p \geq 1$, 
the image ${\rm Ran}\,d_{([\mu^+],[\mu^-])}\, \Lambda$ of the derivative of $\Lambda$ at $([\mu^+],[\mu^-]) \in T_p^+ \times T_p^-$
is given as the algebraic direct sum
$$
{\rm Ran}\,d_{([\mu^+],[\mu^-])}\, \Lambda
=C_{h^-}(\widehat {\mathcal B}_p(\mathbb H^-)) \dot{+} C_{h^+}(\widehat {\mathcal B}_p(\mathbb H^+))
$$
of the two closed subspaces,
where $h^+=\pi(\mu^+)$ and $h^-=\pi(\mu^-)$ are the corresponding quasisymmetric homeomorphisms of $\mathbb R$.
\end{proposition}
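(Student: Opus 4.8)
The plan is to compute $d_{([\mu^+],[\mu^-])}\Lambda$ one factor at a time. By the product decomposition of the tangent space \eqref{tangentspace} and linearity of the differential, $d_{([\mu^+],[\mu^-])}\Lambda$ splits as the sum of its two partial derivatives along $T_p^+$ and $T_p^-$, so that ${\rm Ran}\,d_{([\mu^+],[\mu^-])}\Lambda$ is the algebraic sum of the images of these partials. It therefore suffices to identify each partial image separately, and then to show that the resulting sum is closed and direct.

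To handle the partial along the second factor I would fix $[\mu^+]$ and invoke the translation relation \eqref{translation} exactly as in the proof of Lemma \ref{holomorphic}, obtaining the factorization
$$
\Lambda([\mu^+],\,\cdot\,)=Q_{h^+}\circ\Lambda([0],\,r_{[\,\overline{\mu^+}\,]}^{-1}(\,\cdot\,)),\qquad h^+=\pi(\mu^+).
$$
Here $\Lambda([0],\,\cdot\,)=E^+\circ\beta^-$ is the pre-Bers embedding into $\widehat{\mathcal B}_p(\mathbb H^+)$, whose derivative is a Banach isomorphism onto $\widehat{\mathcal B}_p(\mathbb H^+)$ by Proposition \ref{pre-Bers}; the right translation $r_{[\,\overline{\mu^+}\,]}^{-1}$ is a biholomorphic automorphism by Proposition \ref{group}, so its derivative is a tangent-space isomorphism; and $Q_{h^+}$ is affine with linear part the bounded automorphism $C_{h^+}$ (Proposition \ref{p>1} for $p>1$, Theorem \ref{composition} for $p=1$, the hypothesis $\log (h^+)'\in\dot W^1_1$ being furnished by Theorem \ref{main1}). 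Composing, the partial derivative along $T_p^-$ has image exactly $C_{h^+}(\widehat{\mathcal B}_p(\mathbb H^+))$. The symmetric factorization $\Lambda(\,\cdot\,,[\mu^-])=Q_{h^-}\circ\Lambda(r_{[\,\overline{\mu^-}\,]}^{-1}(\,\cdot\,),[0])$, with $\Lambda(\,\cdot\,,[0])=E^-\circ\beta^+$ mapping onto $\widehat{\mathcal B}_p(\mathbb H^-)$, shows the partial along $T_p^+$ has image $C_{h^-}(\widehat{\mathcal B}_p(\mathbb H^-))$. This yields the asserted algebraic sum, and each summand is closed because $C_{h^\pm}$ is a bounded invertible operator on $\widehat B_p(\mathbb R)$ and $\widehat{\mathcal B}_p(\mathbb H^\pm)$ is closed by Proposition \ref{trace}.

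The main obstacle, and the only step needing input beyond routine differentiation, is the directness of the sum, i.e. $C_{h^-}(\widehat{\mathcal B}_p(\mathbb H^-))\cap C_{h^+}(\widehat{\mathcal B}_p(\mathbb H^+))=\{0\}$. For this I would descend to the BMO setting of Proposition \ref{biholo-C}. Since $T_p^+\times T_p^-\subset\widetilde T_C$ and $\Lambda=\widetilde\Lambda|_{T_p^+\times T_p^-}$, the very same factorization computation applied to $\widetilde\Lambda$ shows that $d\widetilde\Lambda$ carries the two tangential factors onto $C_{h^-}({\rm BMOA}(\mathbb H^-))$ and $C_{h^+}({\rm BMOA}(\mathbb H^+))$; and because $d\widetilde\Lambda$ is a Banach isomorphism onto ${\rm BMO}(\mathbb R)$ at every point of $\widetilde T_C$, these two images form a topological direct sum decomposition of ${\rm BMO}(\mathbb R)$, hence meet only in $0$. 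As $\widehat{\mathcal B}_p(\mathbb H^\pm)\subset{\rm BMOA}(\mathbb H^\pm)$ and the inclusion $\widehat B_p(\mathbb R)\hookrightarrow{\rm BMO}(\mathbb R)$ is continuous, the smaller subspaces $C_{h^\pm}(\widehat{\mathcal B}_p(\mathbb H^\pm))$ lie inside this transverse pair, so their intersection is also trivial. This promotes the algebraic sum to an algebraic direct sum of two closed subspaces and completes the proof.
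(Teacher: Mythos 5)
Your proposal is correct, and its main body coincides with the paper's proof: both identify the partial images by combining the translation relation \eqref{translation}, the identification $\Lambda([0],\,\cdot\,)=E^{+}\circ\beta^{-}$ (resp. $\Lambda(\,\cdot\,,[0])=E^{-}\circ\beta^{+}$) from Proposition \ref{pre-Bers}, the biholomorphy of the right translations (Proposition \ref{group}), and the automorphism property of $C_{h^{\pm}}$ (Proposition \ref{p>1}, Theorem \ref{composition} with the hypothesis supplied by Theorem \ref{main1}), exactly as you did. The only genuine divergence is the trivial-intersection step. The paper argues directly at the $p$-level: since $d_{([\mu^+],[\mu^-])}\Lambda=d_{([\mu^+],[\mu^-])}\widetilde\Lambda|_{{\mathscr T}_{([\mu^+],[\mu^-])}(T_p^+\times T_p^-)}$ and $d\widetilde\Lambda$ is injective by Proposition \ref{biholo-C}, any $\phi$ in both summands has a unique preimage $u$ under $d\Lambda$, which must lie in both tangent factors, so $u=0$ and $\phi=0$. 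You instead re-run the factorization for $\widetilde\Lambda$ to exhibit the BMOA-level topological decomposition of ${\rm BMO}(\mathbb R)$ and then intersect downward using $\widehat{\mathcal B}_p(\mathbb H^{\pm})\subset{\rm BMOA}(\mathbb H^{\pm})$. Your route is sound, but note what it silently imports: the translation relation for $\widetilde\Lambda$, the pre-Bers embedding for $T_B$, and the boundedness of $C_h$ on ${\rm BMO}(\mathbb R)$ are BMO-theory facts (available in \cite{WM-2}, \cite{SWei}) that this paper never restates, whereas the paper's argument consumes only the injectivity statement already packaged in Proposition \ref{biholo-C}. If you want to keep your structure while avoiding those imports, observe that you do not need the BMOA images to be \emph{onto}: from your own $p$-level computation and the restriction identity one has
$$
C_{h^{\mp}}(\widehat{\mathcal B}_p(\mathbb H^{\mp}))
=d_{([\mu^+],[\mu^-])}\Lambda({\mathscr T}_{[\mu^{\pm}]}T_p^{\pm})
\subset d_{([\mu^+],[\mu^-])}\widetilde\Lambda({\mathscr T}_{[\mu^{\pm}]}T_B^{\pm}),
$$
and the two right-hand sides meet only in $0$ because $d\widetilde\Lambda$ is injective and the tangent factors of $T_B^+\times T_B^-$ intersect trivially; this gives your conclusion with no BMO factorization at all.
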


\begin{proof}
Under the direct sum decomposition of the tangent space 
${\mathscr T}_{[\mu^+]}T_p^+ \oplus {\mathscr T}_{[\mu^-]}T_p^-$ as in \eqref{tangentspace},
we will show that
\begin{equation}\label{tangent}
d_{([\mu^+],[\mu^-])}\, \Lambda({\mathscr T}_{[\mu^+]}T_p^+)=C_{h^-}(\widehat {\mathcal B}_p(\mathbb H^-));\quad
d_{([\mu^+],[\mu^-])}\, \Lambda({\mathscr T}_{[\mu^-]}T_p^-)=C_{h^+}(\widehat {\mathcal B}_p(\mathbb H^+)).
\end{equation} 
To this end, we use relation \eqref{translation} to obtain
$$
\Lambda([\mu^+],[\mu^-])
=\Lambda \circ R_{[\mu^+]}([0],[\mu^-] \ast [\bar \mu^+]^{-1})
=Q_{h^+} \circ \Lambda([0],r_{[\bar \mu^+]}^{-1}([\mu^-])).
$$
By fixing $[\mu^+]$, we take the partial derivative of this formula along the direction of $T_p^-$. Then,
\begin{equation}
d_{([\mu^+],[\mu^-])}\, \Lambda|_{{\mathscr T}_{[\mu^-]}T_p^-}
=C_{h^+} \circ d_{([0],[\mu^-]\ast[\bar \mu^+]^{-1})}\Lambda \circ d_{[\mu^-]}r^{-1}_{[\bar \mu^+]},
\end{equation}
where $d_{([0],[\mu^-]\ast[\bar \mu^+]^{-1})}\Lambda$ can be regarded as the identity map on the tangent subspace
${\mathscr T}_{[\mu^-]\ast [\bar \mu^+]^{-1}}T_p^- \cong \widehat {\mathcal B}_p(\mathbb H^+)$.
This implies that $d_{([\mu^+],[\mu^-])}\, \Lambda({\mathscr T}_{[\mu^-]}T_p^-)=C_{h^+}(\widehat {\mathcal B}_p(\mathbb H^+))$.
The other equation in \eqref{tangent} is similarly proved.

Finally, we show that $C_{h^-}(\widehat {\mathcal B}_p(\mathbb H^-)) \cap C_{h^+}(\widehat {\mathcal B}_p(\mathbb H^+))=\{0\}$.
Suppose that $\phi \in \widehat B_p(\mathbb R)$ belongs to both
$C_{h^-}(\widehat {\mathcal B}_p(\mathbb H^-))$ and $C_{h^+}(\widehat {\mathcal B}_p(\mathbb H^+))$.
The derivative $d_{([\mu^+],[\mu^-])}\, \Lambda$ is injective because 
$$
d_{([\mu^+],[\mu^-])}\, \Lambda=d_{([\mu^+],[\mu^-])}\, \widetilde \Lambda|_{{\mathscr T}_{([\mu^+],[\mu^-])}(T_p^+ \times T_p^-)}
$$ 
and $d_{([\mu^+],[\mu^-])}\, \widetilde \Lambda$ is injective by Proposition \ref{biholo-C}
for $\widetilde \Lambda:\widetilde T_C \to {\rm BMO}(\mathbb R)$. Hence,
there is a unique tangent vector $u \in {\mathscr T}_{([\mu^+],[\mu^-])}(T_p^+ \times T_p^-)$ such that
$d_{([\mu^+],[\mu^-])}\, \Lambda(u)=\phi$. However, by belonging of $\phi$, $u$ has to lie in both ${\mathscr T}_{[\mu^-]}T_p^-$ 
and ${\mathscr T}_{[\mu^+]}T_p^+$, and thus $u=o$. This implies that $\phi=0$.
\end{proof}

\begin{remark}\label{direct}
After proving Theorem \ref{main2}, we see that the above algebraic direct sum in fact coincides with
$\widehat B_p(\mathbb R)$ and thus obtain the topological direct sum decomposition
\begin{equation}\label{sum}
\widehat B_p(\mathbb R)=C_{h^-}(\widehat {\mathcal B}_p(\mathbb H^-)) \oplus C_{h^+}(\widehat {\mathcal B}_p(\mathbb H^+)).
\end{equation}
\end{remark}

Item (b) is proved by using the following lemma. The assumption of this statement will be
verified by applying the corresponding result for $\widetilde \Lambda:\widetilde T_C \to {\rm BMO}(\mathbb R)$.

\begin{lemma}\label{imaginary}
If the real subspace $i{\rm Re}\,\widehat B_p(\mathbb R)$ is contained in
the image ${\rm Ran}\,d_{([\mu^+],[\mu^-])}\, \Lambda$ of the derivative
at $([\mu^+],[\mu^-]) \in T_p^+ \times T_p^-$, then $d_{([\mu^+],[\mu^-])}\, \Lambda$ is surjective, that is to say,
${\rm Ran}\,d_{([\mu^+],[\mu^-])}\, \Lambda=\widehat B_p(\mathbb R)$. 
\end{lemma}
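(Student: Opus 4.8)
The plan is to exploit the fact that ${\rm Ran}\, d_{([\mu^+],[\mu^-])}\Lambda$ is a \emph{complex} linear subspace of $\widehat B_p(\mathbb R)$, so that containing the purely imaginary functions $i\,{\rm Re}\,\widehat B_p(\mathbb R)$ automatically forces it to contain the real-valued ones as well, and hence all of $\widehat B_p(\mathbb R)$. Write $V := {\rm Ran}\, d_{([\mu^+],[\mu^-])}\Lambda$. Since $\Lambda$ is holomorphic, its Fr\'echet derivative is a bounded $\mathbb C$-linear operator, so $V$ is a $\mathbb C$-linear subspace; equivalently, this can be read off from Proposition \ref{image}, where $V$ appears as a sum of the two complex subspaces $C_{h^-}(\widehat{\mathcal B}_p(\mathbb H^-))$ and $C_{h^+}(\widehat{\mathcal B}_p(\mathbb H^+))$.

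The core step is the elementary remark that a $\mathbb C$-linear subspace satisfies $iV = V$. Multiplying the hypothesis $i\,{\rm Re}\,\widehat B_p(\mathbb R) \subseteq V$ by $i$ gives $-{\rm Re}\,\widehat B_p(\mathbb R) \subseteq iV = V$, and since ${\rm Re}\,\widehat B_p(\mathbb R)$ is a real vector space closed under negation this reads ${\rm Re}\,\widehat B_p(\mathbb R) \subseteq V$. Thus $V$ contains both ${\rm Re}\,\widehat B_p(\mathbb R)$ and $i\,{\rm Re}\,\widehat B_p(\mathbb R)$.

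To conclude I would decompose an arbitrary $\phi \in \widehat B_p(\mathbb R)$ as $\phi = {\rm Re}\,\phi + i\,{\rm Im}\,\phi$ and observe that ${\rm Re}\,\phi$ and ${\rm Im}\,\phi$ are themselves real-valued members of $\widehat B_p(\mathbb R)$. This is the only point requiring a check, and it is immediate from the defining seminorms: both the second-difference integrand of $\Vert\cdot\Vert_{B_p^{\#}}$ and the oscillation integrand of $\Vert\cdot\Vert_{\rm BMO}$ dominate the corresponding quantities for the real and imaginary parts, using $|{\rm Re}\,w| \le |w|$, $|{\rm Im}\,w| \le |w|$, and the splitting $\phi_I = ({\rm Re}\,\phi)_I + i({\rm Im}\,\phi)_I$ of the integral mean. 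Hence ${\rm Re}\,\phi \in V$ and $i\,{\rm Im}\,\phi \in i\,{\rm Re}\,\widehat B_p(\mathbb R) \subseteq V$, so $\phi \in V$ by additivity, giving ${\rm Ran}\, d_{([\mu^+],[\mu^-])}\Lambda = \widehat B_p(\mathbb R)$.

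I do not anticipate a genuine obstacle inside this lemma: all the analytic content has been displaced into \emph{verifying its hypothesis}, namely that $i\,{\rm Re}\,\widehat B_p(\mathbb R) \subseteq {\rm Ran}\, d\Lambda$, which is precisely where the biholomorphy of $\widetilde\Lambda$ on the chord-arc model (Proposition \ref{biholo-C}) will enter in step (b). The statement itself is a formal consequence of the $\mathbb C$-linearity of the derivative together with the stability of the Besov class under taking real and imaginary parts; the only thing to avoid is overlooking that $V$ is complex-linear and instead attempting a harder direct surjectivity argument.
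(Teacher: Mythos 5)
Your proof is correct, and it takes a genuinely different --- and more elementary --- route than the paper's. The two arguments share the same reduction: since $\widehat B_p(\mathbb R)={\rm Re}\,\widehat B_p(\mathbb R)+i\,{\rm Re}\,\widehat B_p(\mathbb R)$ (your verification that real and imaginary parts stay in $\widehat B_p(\mathbb R)$ is exactly the needed check), everything comes down to putting ${\rm Re}\,\widehat B_p(\mathbb R)$ inside $V={\rm Ran}\,d_{([\mu^+],[\mu^-])}\Lambda$. You get this inclusion formally: $d\Lambda$ is $\mathbb C$-linear by Lemma \ref{holomorphic} (or, as you note, $V$ is a sum of two complex subspaces by Proposition \ref{image}), so $iV=V$ and multiplying the hypothesis $i\,{\rm Re}\,\widehat B_p(\mathbb R)\subset V$ by $i$ finishes the argument. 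The paper, by contrast, never multiplies $V$ by $i$; it uses only additivity of $V$, writing an arbitrary $\phi\in{\rm Re}\,\widehat B_p(\mathbb R)$ as
$$
\phi=\bigl(\phi+iC_{h^+}\circ\mathcal H\circ C_{h^+}^{-1}(\phi)\bigr)-iC_{h^+}\circ\mathcal H\circ C_{h^+}^{-1}(\phi),
$$
where the first term equals $2C_{h^+}P^+(C_{h^+}^{-1}\phi)$ and lies in $C_{h^+}(\widehat{\mathcal B}_p(\mathbb H^+))\subset V$ by Proposition \ref{image}, while the second term is purely imaginary (the Hilbert transform of a real-valued function is real-valued, and $C_{h^+}$ preserves real-valuedness), hence lies in $V$ by hypothesis. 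What each approach buys: yours is a two-line formal consequence of holomorphy that renders both this lemma and the remark following it (the version with ${\rm Re}\,\widehat B_p(\mathbb R)$ in the hypothesis) immediate; the paper's is constructive, requires of $V$ only that it be a real-linear subspace containing $C_{h^+}(\widehat{\mathcal B}_p(\mathbb H^+))$, and exhibits the conjugated Hilbert transform $C_{h^+}\circ\mathcal H\circ C_{h^+}^{-1}$, which prefigures the standardized Cauchy transforms of Sections \ref{10} and \ref{11}. There is no circularity in your route: Lemma \ref{holomorphic} and Proposition \ref{image} both precede this lemma in the paper, so the complex-linearity you invoke is legitimately available.
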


\begin{proof}
It suffices to show that ${\rm Re}\,\widehat B_p(\mathbb R) \subset {\rm Ran}\,d_{([\mu^+],[\mu^-])}\, \Lambda$.
We note that
Proposition \ref{image} implies that 
$C_{h^+}(\widehat {\mathcal B}_p(\mathbb H^+)) \subset {\rm Ran}\,d_{([\mu^+],[\mu^-])}\, \Lambda$.

We take any $\phi \in {\rm Re}\,\widehat B_p(\mathbb R)$. Since $C_{h^+}$ maps ${\rm Re}\,\widehat B_p(\mathbb R)$
isomorphically onto itself, $C_{h^+}^{-1}(\phi)$ also belongs to ${\rm Re}\,\widehat B_p(\mathbb R)$.
We consider its Szeg\"o projection
\begin{equation}\label{PC}
P^+(C_{h^+}^{-1}(\phi))=\frac{1}{2}C_{h^+}^{-1}(\phi)+i \frac{1}{2}\mathcal H \circ C_{h^+}^{-1}(\phi),
\end{equation}
which is in $\widehat {\mathcal B}_p(\mathbb H^+)$. 
The application of $2C_{h^+}$ to this implies
$$
\phi +i C_{h^+} \circ \mathcal H \circ C_{h^+}^{-1}(\phi) \in C_{h^+}(\widehat {\mathcal B}_p(\mathbb H^+)) \subset
{\rm Ran}\,d_{([\mu^+],[\mu^-])}\, \Lambda.
$$
Here, 
$i C_{h^+} \circ \mathcal H \circ C_{h^+}^{-1}(\phi) \in i{\rm Re}\,\widehat B_p(\mathbb R)$ also belongs to
${\rm Ran}\,d_{([\mu^+],[\mu^-])}\, \Lambda$ by the assumption. Therefore, we have 
$\phi \in {\rm Ran}\,d_{([\mu^+],[\mu^-])}\, \Lambda$.
\end{proof}

\begin{remark}
Likewise to the above lemma, we can prove that
if ${\rm Re}\,\widehat B_p(\mathbb R)$ is contained in ${\rm Ran}\,d_{([\mu^+],[\mu^-])}\, \Lambda$, then
$d_{([\mu^+],[\mu^-])}\, \Lambda$ is surjective. 
\end{remark}

Having these preparatory arguments, we can provide the required proof as follows.
Conducting item (c) is included in it.

\begin{proof}[Proof of Theorem \ref{main2}]
Proposition \ref{holomorphic} asserts that $\Lambda$ is a holomorphic injection. 
To show that $\Lambda$ is biholomorphic, we prove that the derivative $d_{([\mu^+],[\mu^-])}\, \Lambda$ 
at every $([\mu^+],[\mu^-]) \in T_p^+ \times T_p^-$ is
surjective onto $\widehat{B}_p(\mathbb R)$. Then, by the inverse mapping theorem (see \cite[\S 7.18]{Ch}),
we will obtain the required claim.

Let $\phi=\Lambda([\mu^+],[\mu^-]) \in \widehat B_p(\mathbb R)$. We can find 
$\phi_0 \in i{\rm Re}\,\widehat{B}_p(\mathbb R) \cap {\rm Ran}\,\Lambda$ and $[\nu] \in T_p$
such that $Q_{h}(\phi_0)=\phi$ with $\pi(\nu)=h$. Indeed, we take $[\nu] \in T_p$ such that 
$\Lambda([\nu],[\bar \nu]) =\log h'={\rm Re}\,\phi$
by Lemma \ref{converse}, and set $\phi_0=iC_h^{-1}({\rm Im}\,\phi)$. Then,
$$
Q_{h}(\phi_0)=C_h(\phi_0)+\log h'=i{\rm Im}\,\phi+{\rm Re}\,\phi=\phi.
$$
Since $Q_{h}$ preserves ${\rm Ran}\,\Lambda$ by relation \eqref{translation}, we see that $\phi_0 \in {\rm Ran}\,\Lambda$.

We consider the derivative of $\Lambda$ at $([\mu^+_0],[\mu^-_0])=R_{[\nu]}^{-1}([\mu^+],[\mu^-])$,
where $\Lambda([\mu^+_0],[\mu^-_0])=Q_{h}^{-1} \circ \Lambda([\mu^+],[\mu^-])=\phi_0$ by \eqref{translation}.
For an arbitrary tangent vector
$v \in i{\rm Re}\,\widehat B_p(\mathbb R)$ at $\phi_0$, we take a sufficiently short segment $c(t)=\phi_0+tv$ in 
$i{\rm Re}\,\widehat B_p(\mathbb R)$ defined for $t \in (-\varepsilon,\varepsilon)$. 
Then, under the biholomorphic homeomorphism 
$\widetilde \Lambda:\widetilde T_C \to {\rm BMO}(\mathbb R)$ onto its image which contains $c(t)$,
each point in the inverse image $(\widetilde \Lambda^{-1} \circ c)(t)$ corresponds to a chord-arc curve of
arc-length parametrization generated by an element of $i{\rm Re}\,\widehat B_p(\mathbb R)$.
In this case, we can apply \cite[Proposition 5.5]{WM-4} to show that each $(\widetilde \Lambda^{-1} \circ c)(t)$
is a $p$-Weil--Petersson embedding in
$T_p^+ \times T_p^-$. Hence, the tangent vector $u=\frac{d}{dt}(\widetilde \Lambda^{-1} \circ c)(t)|_{t=0}$
at $([\mu_0^+],[\mu_0^-])$ belongs to ${\mathscr T}_{([\mu_0^+],[\mu_0^-])}(T_p^+ \times T_p^-)$ and
satisfies $d_{([\mu_0^+],[\mu_0^-])} \widetilde \Lambda(u)=v$. Then, we use the fact that
$d \widetilde \Lambda|_{{\mathscr T}_{([\mu_0^+],[\mu_0^-])}(T_p^+ \times T_p^-)}=d\Lambda$ (see \cite[Claim 2]{WM-4})
to conclude that $v \in {\rm Ran}\,d_{([\mu_0^+],[\mu_0^-])}\, \Lambda$.
Thus, we have $i{\rm Re}\,\widehat B_p(\mathbb R) \subset {\rm Ran}\,d_{([\mu_0^+],[\mu_0^-])}\, \Lambda$.

Under this condition,
Lemma \ref{imaginary} yields that $d_{([\mu_0^+],[\mu_0^-])}\, \Lambda$ is surjective.
Hence, by the inverse mapping theorem, 
there exists some neighborhood $U_0$ of $\phi_0$ in $\widehat B_p(\mathbb R)$ such that $U_0 \subset {\rm Ran}\,\Lambda$, and
$\Lambda^{-1}$ is holomorphic on $U_0$.
Since 
$$
d_{([\mu^+],[\mu^-])}\,\Lambda=d_{\phi_0}Q_{h} \circ d_{([\mu_0^+],[\mu_0^-])}\,\Lambda 
\circ d_{([\mu^+],[\mu^-])}R_{[\nu]}^{-1},
$$
this is also surjective (and hence isomorphic). Alternatively, we can construct the local holomorphic inverse
$R_{[\nu]} \circ \Lambda^{-1}|_{U_0} \circ Q_{h}^{-1}$ on the neighborhood $Q_{h}(U_0)$ of $\phi$.
\end{proof}

Finally, as an application of Theorem \ref{main2}, we obtain a result about
the real-analytic structure of the integrable Teich\-m\"ul\-ler space $T_p$ for $p \geq 1$.
We restrict the biholomorphic map $\Lambda$ to the real-analytic submanifold ${\rm Sym}\,(T_p^+ \times T_p^-)$, 
as in the setting of Theorem \ref{theorem10}. 
By composing $\Lambda$ with the canonical real-analytical embedding 
$\iota:T_p \to {\rm Sym}\,(T_p^+ \times T_p^-)$ given by $\iota([\mu])=([\mu],[\bar \mu])$, we have the following result. 
With some restrictions on $p$, it has appeared in \cite[Theorem 2.3]{ST} and \cite[Corollary 5.2]{WM-4}.

\begin{corollary}\label{structure}
For $p \geq 1$, the map $\Lambda \circ \iota:T_p \to {\rm Re}\,\widehat B_p(\mathbb R)$
given by $h \mapsto \log h'$
is a real-analytic diffeomorphism.
\end{corollary}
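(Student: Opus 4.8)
The plan is to read off Corollary~\ref{structure} as a direct consequence of Theorem~\ref{main2}, Lemma~\ref{converse}, and the real-analyticity of the symmetric embedding $\iota$ established in Section~\ref{7}. The composition $\Lambda \circ \iota$ sends $[\mu] \mapsto ([\mu],[\bar\mu]) \mapsto \log \gamma([\mu],[\bar\mu])' = \log h'$, and Theorem~\ref{main1} guarantees that its values lie in ${\rm Re}\,\widehat B_p(\mathbb R)$, so it is a well-defined map $T_p \to {\rm Re}\,\widehat B_p(\mathbb R)$. It then remains to check that it is a real-analytic bijection with real-analytic inverse.

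First I would establish bijectivity. Injectivity is inherited from both factors: $\iota$ is injective, being a real-analytic embedding onto ${\rm Sym}\,(T_p^+ \times T_p^-)$, and $\Lambda$ is injective by Theorem~\ref{main2}. Surjectivity onto ${\rm Re}\,\widehat B_p(\mathbb R)$ is precisely the content of Lemma~\ref{converse}: every $\phi \in {\rm Re}\,\widehat B_p(\mathbb R)$ is of the form $\Lambda([\mu],[\bar\mu])$ with $([\mu],[\bar\mu]) = \iota([\mu])$ lying on the axis of symmetry. Real-analyticity of the forward map is then immediate, since $\iota$ is real-analytic and $\Lambda$ is holomorphic by Theorem~\ref{main2}, hence real-analytic, and the composition of real-analytic maps is real-analytic.

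For the inverse, since $\Lambda$ is biholomorphic onto the open set ${\rm Ran}\,\Lambda \supset {\rm Re}\,\widehat B_p(\mathbb R)$, the map $\Lambda^{-1}$ is holomorphic there. Restricting a holomorphic map between complex Banach spaces to the closed real subspace ${\rm Re}\,\widehat B_p(\mathbb R)$ produces a real-analytic map, and by Lemma~\ref{converse} its image is contained in the real-analytic submanifold ${\rm Sym}\,(T_p^+ \times T_p^-)$. Composing with the real-analytic inverse $\iota^{-1}$ of the embedding therefore yields $(\Lambda \circ \iota)^{-1} = \iota^{-1} \circ \Lambda^{-1}|_{{\rm Re}\,\widehat B_p(\mathbb R)}$ as a real-analytic map, completing the argument. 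The one point that genuinely requires the earlier work, and hence the main obstacle, is this image statement: that $\Lambda^{-1}$ carries the real subspace ${\rm Re}\,\widehat B_p(\mathbb R)$ into the symmetry axis ${\rm Sym}\,(T_p^+ \times T_p^-)$ rather than merely into $T_p^+ \times T_p^-$. This is exactly Lemma~\ref{converse}; without it one could not factor the inverse through $\iota^{-1}$, and everything else is routine bookkeeping about restrictions of holomorphic maps to real subspaces.
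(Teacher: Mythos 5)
Your proposal is correct and follows essentially the same route as the paper: the paper obtains Corollary~\ref{structure} exactly by restricting the biholomorphic map $\Lambda$ of Theorem~\ref{main2} to the real-analytic submanifold ${\rm Sym}\,(T_p^+ \times T_p^-)$ via the embedding $\iota$, with Theorem~\ref{main1} and Lemma~\ref{converse} identifying the image as ${\rm Re}\,\widehat B_p(\mathbb R)$. Your write-up merely makes explicit the bookkeeping (injectivity, surjectivity via Lemma~\ref{converse}, and the real-analyticity of the restricted inverse) that the paper leaves implicit.
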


Thus, the integrable Teich\-m\"ul\-ler space $T_p$ is real-analytically equivalent to 
the entire real Banach space ${\rm Re}\,\widehat B_p(\mathbb R)$. Hence, 
the complex Banach structure of $T_p$ is
subordinate to the real Banach structure of ${\rm Re}\,\widehat B_p(\mathbb R)$.

\section{Analysis on curves via biholomorphic correspondence}\label{10}

The biholomorphic correspondence from Weil--Petersson embeddings in the Bers coordinates to the Besov space not only describes the structure of the integrable Teich\-m\"ul\-ler spaces but also provides a method of analysis on Weil--Petersson curves for classical real analysis problems. In this section,
we first introduce the Cauchy transform and the Cauchy projection by
considering the Cauchy integral on a Weil--Petersson curve $\Gamma = \gamma(\mathbb R)$,
where 
$\gamma = \gamma([\mu^+],[\mu^-])$ is a $p$-Weil--Petersson embedding for $([\mu^+],[\mu^-])
\in T_p^+ \times T_p^-$ with $p \geq 1$. 

We define the Banach space of Besov functions on the $p$-Weil--Petersson
curve $\Gamma=\gamma(\mathbb R)$ by
the push-forward of $\widehat B_p(\mathbb R)$ by $\gamma$ and identify this pair. Namely, 
$$
\widehat B_p(\gamma(\mathbb R))=\{\gamma_*\phi=\phi \circ \gamma^{-1} \mid \phi \in \widehat B_p(\mathbb R)\}
$$
with norm $\Vert \gamma_*\phi \Vert_{\widehat B_p(\gamma)}=\Vert \phi \Vert_{\widehat B_p}$. 

\begin{remark}
Usually, the function spaces on the locally rectifiable curve $\Gamma$ is defined
by using its arc-length parametrization.
In our case, for a $p$-Weil--Petersson  embedding $\gamma_0:\mathbb R \to \mathbb C$ with $\gamma_0(\mathbb R)=\Gamma$
such that $\gamma_0$ is the arc-length parametrization of $\Gamma$,
we may consider $\widehat B_p(\gamma_0(\mathbb R))$ in the above notation. However, the difference between
$\gamma$ and $\gamma_0$ is given by the composition operator $C_{h}$
for a quasisymmetric homeomorphism $h \in T_p$
(as explained in the next section), and it can be controlled well. Hence, we adopt
the representation $\widehat B_p(\gamma(\mathbb R))$ because it helps the arguments to be more natural
when we consider dependence of operators on $\gamma$.
\end{remark}

Let $\Omega^+$ and $\Omega^-$ be the left and the right domains bounded by $\Gamma$, respectively.
Let $F^\pm:\mathbb H^\pm \to \Omega^\pm$ be the normalized Riemann mappings.
Then, we also define the Banach space of analytic Besov functions on $\Omega^\pm$ 
by the push-forward of $\widehat {\mathcal B}_p(\mathbb H^\pm)$ by $F^\pm$ and 
identify these pairs. Namely,
$$
\widehat {\mathcal B}_p(\Omega^\pm)=\{(F^\pm)_* \Phi^\pm=\Phi^\pm \circ (F^\pm)^{-1} \mid \Phi^\pm \in \widehat {\mathcal B}_p(\mathbb H^\pm)\}
$$
with norm $\Vert (F^\pm)_* \Phi^\pm \Vert_{\widehat {\mathcal B}_p(\Omega^\pm)}=\Vert \Phi^\pm \Vert_{\widehat {\mathcal B}_p}$.

For any functions $(F^\pm)_* \Phi^\pm \in \widehat {\mathcal B}_p(\Omega^\pm)$, 
their boundary extensions to $\Gamma$ is defined by
$$
E_\Gamma^\pm((F^\pm)_* \Phi^\pm)=E(\Phi^\pm) \circ (F^\pm)^{-1},
$$
where the Riemann mappings $F^\pm$ are assumed to extend to the homeomorphisms of $\mathbb R$ onto $\Gamma$.

\begin{proposition}\label{extnorm}
It holds that $E_\Gamma^\pm(\widehat {\mathcal B}_p(\Omega^\pm)) \subset \widehat B_p(\gamma(\mathbb R))$
for any $\gamma=\gamma([\mu^+],[\mu^-])$.  
Moreover, these trace operators $E_\Gamma^\pm$ are Banach isomorphisms onto their images, where their operator norms are
estimated in terms of the norms of the composition operators $C_{h^\pm}$
for $h^\pm = \pi(\mu^\pm)$.
\end{proposition}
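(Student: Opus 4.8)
The plan is to reduce both assertions to properties of the composition operators $C_{h^\pm}$ by exhibiting each trace operator $E_\Gamma^\pm$ as a conjugate of $C_{h^\pm}$ through the two isometries $(F^\pm)_*$ and $\gamma_*$. The essential input is a boundary identity furnished by the simultaneous uniformization. Since $G=G(\mu^+,\mu^-)$ has complex dilatation $\mu^\pm$ on $\mathbb H^\pm$ and the Riemann mapping $F^\pm:\mathbb H^\pm \to \Omega^\pm$ is conformal, the map $(F^\pm)^{-1}\circ G|_{\mathbb H^\pm}$ is a quasiconformal self-homeomorphism of $\mathbb H^\pm$ whose complex dilatation is again $\mu^\pm$. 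Under the common normalization fixing $0,1,\infty$, which $\Gamma$ passes through since $\gamma=G|_{\mathbb R}$ fixes these points, this self-map coincides with $H(\mu^\pm)$, so its boundary values are $h^\pm=\pi(\mu^\pm)$. Restricting to $\mathbb R$ yields
$$
(F^\pm)^{-1}\circ \gamma = h^\pm, \qquad \text{equivalently} \qquad (F^\pm)^{-1}|_\Gamma = h^\pm \circ \gamma^{-1}.
$$

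From here I would compute $E_\Gamma^\pm$ directly. For $\Phi^\pm \in \widehat{\mathcal B}_p(\mathbb H^\pm)$ with boundary function $E(\Phi^\pm)\in \widehat{\mathcal B}_p(\mathbb H^\pm)\subset \widehat B_p(\mathbb R)$, the definition of the boundary extension gives
$$
E_\Gamma^\pm((F^\pm)_*\Phi^\pm)=E(\Phi^\pm)\circ (F^\pm)^{-1}=E(\Phi^\pm)\circ h^\pm \circ \gamma^{-1}=\gamma_*(C_{h^\pm}(E(\Phi^\pm))).
$$
This is the key factorization $E_\Gamma^\pm \circ (F^\pm)_* = \gamma_* \circ C_{h^\pm} \circ E$ as operators from $\widehat{\mathcal B}_p(\mathbb H^\pm)$ into the functions on $\Gamma$. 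The inclusion $E_\Gamma^\pm(\widehat{\mathcal B}_p(\Omega^\pm))\subset \widehat B_p(\gamma(\mathbb R))$ is then immediate: $E$ lands in $\widehat B_p(\mathbb R)$ by Proposition \ref{trace}, the operator $C_{h^\pm}$ preserves $\widehat B_p(\mathbb R)$ by Proposition \ref{p>1} for $p>1$ and by Theorem \ref{composition} for $p=1$ (whose hypothesis $\log (h^\pm)' \in {\dot W}^1_1(\mathbb R)$ holds by Theorem \ref{main1} together with Lemma \ref{W1}), and $\gamma_*$ carries $\widehat B_p(\mathbb R)$ isometrically onto $\widehat B_p(\gamma(\mathbb R))$ by definition.

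For the second assertion I would read the Banach-isomorphism property and the norm bounds off the same factorization. On the right-hand side $\gamma_*$ is an isometric isomorphism onto its image, and $E:\widehat{\mathcal B}_p(\mathbb H^\pm)\to E(\widehat{\mathcal B}_p(\mathbb H^\pm))$ is a Banach isomorphism onto its image by Proposition \ref{trace}; the only nontrivial factor is $C_{h^\pm}$, a Banach automorphism of $\widehat B_p(\mathbb R)$ with inverse $C_{(h^\pm)^{-1}}$. Since $(F^\pm)_*$ is an isometric isomorphism onto $\widehat{\mathcal B}_p(\Omega^\pm)$ by definition, the factorization shows that $E_\Gamma^\pm$ is itself a Banach isomorphism onto its image $E_\Gamma^\pm(\widehat{\mathcal B}_p(\Omega^\pm))=\gamma_*(C_{h^\pm}(\widehat{\mathcal B}_p(\mathbb H^\pm)))$. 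Chasing norms through the isometries gives $\Vert E_\Gamma^\pm \Vert \lesssim \Vert C_{h^\pm}\Vert$ and $\Vert (E_\Gamma^\pm)^{-1}\Vert \lesssim \Vert C_{h^\pm}^{-1}\Vert = \Vert C_{(h^\pm)^{-1}}\Vert$, the implicit constants depending only on the isomorphism constants of $E$; this is the estimate in terms of the composition operators $C_{h^\pm}$.

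The main obstacle is the very first step, the boundary identity $(F^\pm)^{-1}\circ \gamma = h^\pm$, i.e.\ that the conformal mapping $F^\pm$ absorbs exactly the quasisymmetric homeomorphism $h^\pm=\pi(\mu^\pm)$ from $\gamma$. This is a matter of normalization bookkeeping: confirming that $(F^\pm)^{-1}\circ G|_{\mathbb H^\pm}$, which manifestly has dilatation $\mu^\pm$, is normalized to coincide with $H(\mu^\pm)$ rather than differing from it by a Möbius transformation. Once this identity is pinned down, the remainder is a formal consequence of the already established boundedness of $C_{h^\pm}$ and of the trace isomorphism $E$, and no further analytic estimate is required.
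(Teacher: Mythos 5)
Your proposal is correct and follows essentially the same route as the paper: the paper's (very terse) proof rests on exactly your factorization, namely that the norms on $\widehat{\mathcal B}_p(\Omega^\pm)$ and $\widehat B_p(\gamma(\mathbb R))$ are induced by $(F^\pm)_*$ and $\gamma_*$ respectively, so that via $\gamma = F^\pm \circ h^\pm$ the discrepancy between $E^\pm$ and $E_\Gamma^\pm$ is precisely the composition operator $C_{h^\pm}$. Your write-up merely supplies the details the paper leaves implicit (the normalization argument identifying $(F^\pm)^{-1}\circ G|_{\mathbb H^\pm}$ with $H(\mu^\pm)$, and the explicit norm chase), all of which are accurate.
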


\begin{proof}
The norm on $\widehat {\mathcal B}_p(\Omega^\pm)$ is induced from $\widehat {\mathcal B}_p(\mathbb H^\pm)$
by $F^\pm$ whereas the norm of $\widehat B_p(\gamma(\mathbb R))$ is induced from $\widehat B_p(\mathbb R)$ by $\gamma$.
Because $\gamma=F^\pm \circ h^\pm$, 
the differences between $\Vert E^\pm \Vert$ and $\Vert E_\Gamma^\pm \Vert$
are caused by the composition operators $C_{h^\pm}$.
\end{proof}

By this proposition, we see that $E_\Gamma^\pm(\widehat {\mathcal B}_p(\Omega^\pm))$ 
are closed subspaces of $\widehat B_p(\gamma(\mathbb R))$. Moreover, under the Banach isomorphism $E_\Gamma^\pm$,
we may identify $E_\Gamma^\pm(\widehat {\mathcal B}_p(\Omega^\pm))$ with $\widehat {\mathcal B}_p(\Omega^\pm)$. Hence,
we regard $\widehat {\mathcal B}_p(\Omega^\pm)$ as closed subspaces of
$\widehat B_p(\gamma(\mathbb R))$ without noticing $E_\Gamma^\pm$ hereafter.
However, the Banach isomorphisms $E_\Gamma^\pm$ are not uniform but they depend on $\gamma$
as shown in Proposition \ref{extnorm}.

\begin{definition} 
The {\it Cauchy transform} of  
$\psi \in \widehat B_p(\gamma(\mathbb R))$ on the $p$-Weil--Petersson curve $\Gamma = \gamma(\mathbb R)$ 
(oriented by $\mathbb R$ 
consistently with $\pm$) is defined by the singular integral
$$
({\mathcal H}_\Gamma \psi)(\xi)={\rm p.v.} \frac{1}{\pi} \int_{\Gamma} \frac{\psi(z)}{\xi-z}dz
\quad(\xi \in \Gamma),
$$
where $dz=\gamma'(t)dt$.
The Cauchy integrals of $\psi$ on $\Gamma$ are defined by
$$
({P}^\pm_\Gamma \psi)(\zeta)=\frac{1}{\pi} \int_{\Gamma} \frac{\psi(z)}{\zeta-z}dz
\quad (\zeta \in \Omega^\pm),
$$
which are holomorphic functions on $\Omega^\pm$.
\end{definition}

The point-wise (a.e.) convergence of the Cauchy transform and the Cauchy integrals for $\widehat B_p(\gamma(\mathbb R))$ 
are guaranteed by using
the regularized kernel. By the Privalov theorem (see \cite[p.431]{Go}), if the Cauchy transform $({\mathcal H}_\Gamma \psi)(\xi)$ exists
a.e. on $\Gamma$, then the Cauchy integrals $({P}^\pm_\Gamma \psi)(\zeta)$ have non-tangential limits a.e. on $\Gamma$,
and vice versa.
The boundary extensions $E_\Gamma^\pm({P}^\pm_\Gamma \psi)$ of
${P}^\pm_\Gamma \psi$ to $\Gamma$ 
are also denoted by the same symbol ${P}^\pm_\Gamma \psi$ and called the {\it Cauchy projections} of $\psi$.
Later, we see that ${P}^\pm_\Gamma \psi \in \widehat {\mathcal B}_p(\Omega^\pm)$,
and the Cauchy transform ${\mathcal H}_\Gamma \psi$ is in $\widehat B_p(\gamma(\mathbb R))$. 

The {\it Plemelj formula} (also named after Sokhotski and Privalov) for the Riemann--Hilbert problem asserts
the following relation between the Cauchy transform and the Cauchy projections.
This is a generalization of the relation between the Hilbert transform and the Szeg\"o projections.

\begin{proposition}\label{Plemelj}
For a function $\psi \in \widehat B_p(\gamma(\mathbb R))$ 
on the $p$-Weil--Petersson curve $\Gamma=\gamma(\mathbb R)$, the Cauchy transform ${\mathcal H}_\Gamma$ and Cauchy projections $P^\pm_\Gamma$ satisfy
\begin{equation*}
P^+_\Gamma \psi = \frac{1}{2}(\psi + i{\mathcal H}_\Gamma \psi),\quad
P^-_\Gamma \psi = \frac{1}{2}(\psi - i{\mathcal H}_\Gamma \psi).
\end{equation*}
In other words,
\begin{equation*}
\psi = P^+_\Gamma \psi + P^-_\Gamma \psi,\quad
i{\mathcal H}_\Gamma \psi = P^+_\Gamma \psi - P^-_\Gamma \psi
\end{equation*}
holds.
\end{proposition}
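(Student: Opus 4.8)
The plan is to read Proposition~\ref{Plemelj} as the Sokhotski--Plemelj jump relation on the chord-arc curve $\Gamma$ and to deduce it from the half-plane special case already recorded in Section~\ref{4}, namely that the boundary extension of the Szeg\"o projection on $\mathbb H^\pm$ is the Riesz projection $P^\pm=\tfrac12(I\pm i\mathcal H)$. First I would observe that the two displayed formulations are algebraically equivalent: adding and subtracting $P^\pm_\Gamma\psi=\tfrac12(\psi\pm i\mathcal H_\Gamma\psi)$ gives $\psi=P^+_\Gamma\psi+P^-_\Gamma\psi$ and $i\mathcal H_\Gamma\psi=P^+_\Gamma\psi-P^-_\Gamma\psi$, and the reverse implication is immediate. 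Hence it suffices to prove the single identity $P^\pm_\Gamma\psi=\tfrac12(\psi\pm i\mathcal H_\Gamma\psi)$, in which the term $\tfrac12\psi$ is the intrinsic half-residue (jump) contribution and $\pm\tfrac{i}{2}\mathcal H_\Gamma\psi$ is, by the very definition of $\mathcal H_\Gamma$, the principal-value contribution.

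Next I would carry out the conformal transport. Writing $\gamma=F^\pm\circ h^\pm$ and regarding $\Phi(\zeta)=\tfrac1\pi\int_\Gamma\frac{\psi(z)}{\zeta-z}\,dz$ as a single holomorphic function on $\mathbb C\setminus\Gamma$, the Cauchy projections $P^\pm_\Gamma\psi$ are the non-tangential boundary values of $\Phi|_{\Omega^\pm}$, which exist almost everywhere by the Privalov theorem cited above. Pulling $\Phi|_{\Omega^+}$ back by $F^+$ and substituting $z=F^+(s)$ turns the integral into one over $\mathbb R$ against the kernel $\tfrac{(F^+)'(s)}{F^+(\omega)-F^+(s)}$ applied to the density $\psi\circ F^+=\phi\circ(h^+)^{-1}=C_{(h^+)^{-1}}\phi$, which lies in $\widehat B_p(\mathbb R)$ by Proposition~\ref{p>1} and Theorem~\ref{composition}. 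Because $F^+$ is conformal with non-vanishing derivative, the Taylor expansion $F^+(\omega)-F^+(s)=(F^+)'(s)(\omega-s)\bigl(1+O(\omega-s)\bigr)$ near the diagonal yields the splitting
\[
\frac{(F^+)'(s)}{F^+(\omega)-F^+(s)}=\frac{1}{\omega-s}+K(\omega,s),
\]
in which the principal part is exactly the half-plane Cauchy kernel and the remainder $K$ extends continuously up to and across $\mathbb R$. Thus the pulled-back Cauchy integral is the half-plane Szeg\"o projection of $C_{(h^+)^{-1}}\phi$ plus a bounded operator whose two one-sided boundary values coincide.

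Finally I would assemble the pieces. Since the remainder $K$ contributes identically from $\mathbb H^+$ and $\mathbb H^-$, it produces no jump, so the discontinuity of $\Phi$ across $\Gamma$ is governed solely by the half-plane kernel $\tfrac1{\omega-s}$; invoking the established half-plane identity $P^\pm=\tfrac12(I\pm i\mathcal H)$ and pushing forward by $F^\pm$ gives the local half-density term $\tfrac12\psi$ on each side together with the principal-value term, which is $\tfrac12\mathcal H_\Gamma\psi$ by the definition of the Cauchy transform. This produces $P^\pm_\Gamma\psi=\tfrac12(\psi\pm i\mathcal H_\Gamma\psi)$, and Proposition~\ref{extnorm} guarantees that the resulting boundary functions lie in $\widehat{\mathcal B}_p(\Omega^\pm)\subset\widehat B_p(\gamma(\mathbb R))$. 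I expect the main obstacle to be the justification that $K$ is genuinely non-singular across the boundary for a merely chord-arc curve and a Besov density, rather than a smooth curve and a H\"older density; this is where the regularity $\log(F^\pm)'\in\widehat{\mathcal B}_p(\mathbb H^\pm)$ of the Riemann maps from Lemma~\ref{conformal}, together with the chord-arc property of $\Gamma$, must be used to control the remainder operator and to ensure it drops out of the singular identity.
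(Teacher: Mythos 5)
Your proof hinges on a step that fails in the stated generality. The splitting $\frac{(F^+)'(s)}{F^+(\omega)-F^+(s)}=\frac{1}{\omega-s}+K(\omega,s)$ with $K$ continuous up to the boundary is extracted from the Taylor expansion $F^+(\omega)-F^+(s)=(F^+)'(s)(\omega-s)\bigl(1+O(\omega-s)\bigr)$, and that expansion presupposes $C^{1,\alpha}$ (or at least Dini-type) regularity of $F^+$ up to $\mathbb R$. A $p$-Weil--Petersson curve does not have this: by Lemma \ref{app} the angular derivative $(F^+)'$ exists and is nonzero only almost everywhere, $\log (F^+)'$ lies merely in $\widehat{\mathcal B}_p(\mathbb H^+)$, which is a VMO-type class, and for every $p>1$ the curve $\Gamma$ need not even be $C^1$ (only $T_1$ yields $C^1$ curves, and even those carry no H\"older or Dini modulus for $\gamma'$). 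Consequently $K$ is genuinely singular near the diagonal and the smooth-curve kernel argument does not apply. You flag exactly this as ``the main obstacle,'' but it is not a technicality that the regularity $\log(F^\pm)'\in\widehat{\mathcal B}_p(\mathbb H^\pm)$ can absorb; it is the entire content of the claim, and without it the proof collapses.

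There is also a structural flaw in the assembly: the assertion that ``$K$ contributes identically from $\mathbb H^+$ and $\mathbb H^-$, hence produces no jump'' has no meaning, because $F^+$ is conformal only on $\mathbb H^+$ and does not parametrize the $\Omega^-$ side; the remainder term has no second boundary value to compare. (In fact no such comparison is needed -- the proposition consists of two one-sided identities, each provable by a one-sided computation -- but each one-sided computation again requires the unavailable control of $K$.) For comparison, the paper does not prove Proposition \ref{Plemelj} by conformal transport at all: it quotes the classical Sokhotski--Plemelj--Privalov theory for rectifiable curves, resting on the Privalov theorem \cite[p.431]{Go}, which is local and measure-theoretic -- once the principal value exists a.e., the non-tangential limits exist a.e. and the jump relations hold a.e., using Lebesgue points and the a.e. existence of tangents of a rectifiable curve, with no smoothness of $\Gamma$ or of the Riemann maps. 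If you want a self-contained argument in this setting, that is the route to follow; the genuinely new operator-theoretic content in the paper appears only afterwards, in Theorem \ref{Cauchy}, where Proposition \ref{Plemelj} enters as classical input.
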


For $([\mu^+],[\mu^-]) \in T_p^+ \times T_p^-$, we have obtained 
the images of the tangent spaces ${\mathscr T}_{[\mu^+]} T_p^+$ and
${\mathscr T}_{[\mu^-]} T_p^-$ under the derivative $d_{([\mu^+],[\mu^-])}\Lambda$ as in \eqref{tangent},
and since $d_{([\mu^+],[\mu^-])}\Lambda$ is surjective, which is the main ingredient of the proof of Theorem \ref{main2},
we have the topological direct sum decomposition \eqref{sum} of $\widehat B_p(\mathbb R)$
as mentioned in Remark \ref{direct}. Then, every $\phi \in \widehat B_p(\mathbb R)$ is uniquely
represented by $\phi=\phi^++\phi^-$ for $\phi^+ \in C_{h^+}(\widehat {\mathcal B}_p(\mathbb H^+))$ and
$\phi^- \in C_{h^-}(\widehat {\mathcal B}_p(\mathbb H^-))$, and this defines the bounded linear projections
\begin{align}
P^+_{([\mu^+],[\mu^-])}&:\widehat B_p(\mathbb R) \to C_{h^+}(\widehat {\mathcal B}_p(\mathbb H^+)),\quad
\phi \mapsto \phi^+;\\
P^-_{([\mu^+],[\mu^-])}&:\widehat B_p(\mathbb R) \to C_{h^-}(\widehat {\mathcal B}_p(\mathbb H^-)),\quad
\phi \mapsto \phi^-.
\end{align}

\begin{theorem}\label{Cauchy}
In the Besov space $\widehat B_p(\gamma(\mathbb R))$ on the $p$-Weil--Petersson curve $\Gamma=\gamma(\mathbb R)$ 
of $\gamma=\gamma([\mu^+],[\mu^-])$ for $p \geq 1$, 
the Cauchy projections $P^\pm_\Gamma$ satisfy
$$
P^\pm_\Gamma = \gamma_* \circ P^\pm_{([\mu^+],[\mu^-])} \circ \gamma_*^{-1}.
$$
In particular, $P^\pm_\Gamma$ maps $\widehat B_p(\gamma(\mathbb R))$ onto $\widehat {\mathcal B}_p(\Omega^\pm)$, they are bounded linear operators, 
and their operator norms are estimated in terms of $\Vert C_{h^\pm}\Vert$.
\end{theorem}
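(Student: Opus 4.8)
The plan is to show that both families of operators are the bounded projections attached to one and the same topological direct sum decomposition of $\widehat B_p(\gamma(\mathbb R))$, and then to invoke the uniqueness of such projections. Throughout, write $Q^\pm := \gamma_* \circ P^\pm_{([\mu^+],[\mu^-])} \circ \gamma_*^{-1}$ for the conjugated operators on the right-hand side of the asserted identity.

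First I would push the decomposition \eqref{sum} forward by $\gamma_*$. Using $\gamma = F^\pm \circ h^\pm$, for any $\Psi^\pm \in \widehat{\mathcal B}_p(\mathbb H^\pm)$ one computes $\gamma_*(C_{h^\pm}\Psi^\pm) = \Psi^\pm \circ h^\pm \circ \gamma^{-1} = \Psi^\pm \circ (F^\pm)^{-1} = (F^\pm)_* \Psi^\pm$, so that $\gamma_*\bigl(C_{h^\pm}(\widehat{\mathcal B}_p(\mathbb H^\pm))\bigr) = \widehat{\mathcal B}_p(\Omega^\pm)$. Since $\gamma_*$ is an isometric isomorphism and \eqref{sum} is a topological direct sum, this transports \eqref{sum} to the topological direct sum $\widehat B_p(\gamma(\mathbb R)) = \widehat{\mathcal B}_p(\Omega^+) \oplus \widehat{\mathcal B}_p(\Omega^-)$, and by construction $Q^\pm$ are precisely the associated bounded projections onto $\widehat{\mathcal B}_p(\Omega^\pm)$. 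The operator-norm control follows from $\Vert Q^\pm \Vert = \Vert P^\pm_{([\mu^+],[\mu^-])} \Vert$ (as $\gamma_*$ is isometric) together with the estimates for the trace operators $E^\pm_\Gamma$ in terms of $\Vert C_{h^\pm}\Vert$ from Proposition \ref{extnorm}.

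Next I would verify that the Cauchy projections $P^\pm_\Gamma$ are the projections belonging to this very decomposition, from which $P^\pm_\Gamma = Q^\pm$ is immediate. The Plemelj formula (Proposition \ref{Plemelj}) already gives $P^+_\Gamma + P^-_\Gamma = I$, so it is enough to check the reproducing and annihilating behaviour on the two summands: for $\psi \in \widehat{\mathcal B}_p(\Omega^\pm)$ one wants $P^\pm_\Gamma \psi = \psi$ and $P^\mp_\Gamma \psi = 0$. Such a $\psi$ is the boundary value of a function holomorphic on $\Omega^\pm$, and applying Cauchy's integral theorem and formula along the locally rectifiable chord-arc curve $\Gamma$ — the boundary regularity being furnished by the analytic Besov membership together with the Privalov theorem invoked earlier — yields reproduction of $\psi$ inside $\Omega^\pm$ and vanishing inside $\Omega^\mp$. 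Writing a general $\psi = \psi^+ + \psi^-$ according to the decomposition of the first step then gives $P^\pm_\Gamma \psi = \psi^\pm = Q^\pm \psi$; since the projections attached to a fixed topological direct sum are unique, $P^\pm_\Gamma = Q^\pm$. The surjectivity onto $\widehat{\mathcal B}_p(\Omega^\pm)$, the boundedness, and the norm bounds are then inherited from the first step.

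The step I expect to be the main obstacle is the reproducing property of the Cauchy integral on $\Gamma$. Because $\gamma(\infty) = \infty$, the curve passes through the point at infinity, so a naive application of Cauchy's theorem must account for the behaviour of the integrand at infinity and for the convergence of the contour integral. I would deal with this by conjugating with a M\"obius transformation carrying $\infty$ to a finite boundary point, thereby reducing to a bounded chord-arc (equivalently, disk) configuration in which the continuity up to the boundary of analytic Besov functions makes the classical Cauchy integral formula directly applicable. One must also keep the orientation of $\Gamma$ fixed as in the Definition so that $P^+_\Gamma$ reproduces the functions holomorphic in $\Omega^+$, consistent with the normalization $P^\pm_\Gamma = P^\pm$ in the flat case $\gamma = \mathrm{id}$, where $\Omega^\pm = \mathbb H^\pm$.
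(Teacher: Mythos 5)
Your first step is correct, and it is in fact how the paper's own proof begins: from $\gamma=F^\pm\circ h^\pm$ one gets $\gamma_*\bigl(C_{h^\pm}(\widehat{\mathcal B}_p(\mathbb H^\pm))\bigr)=\widehat{\mathcal B}_p(\Omega^\pm)$, so the decomposition \eqref{sum} transports to $\widehat B_p(\gamma(\mathbb R))=\widehat{\mathcal B}_p(\Omega^+)\oplus\widehat{\mathcal B}_p(\Omega^-)$ with your $Q^\pm$ as the associated bounded projections; this is the content of \eqref{decomp1}. The genuine gap is in your second step, at exactly the point you flag as the main obstacle: the reproducing/annihilating property of $P^\pm_\Gamma$ on the two summands. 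This property is not implied by Proposition \ref{Plemelj} (which gives only the jump relations $P^+_\Gamma\psi+P^-_\Gamma\psi=\psi$ and $i\mathcal H_\Gamma\psi=P^+_\Gamma\psi-P^-_\Gamma\psi$); it is essentially equivalent to the theorem being proved, so it carries all the analytic weight, and your sketch for it does not go through. A function $\psi\in\widehat B_p(\gamma(\mathbb R))$ has only BMO-type control, hence may be unbounded and is not integrable against the kernel $1/(\zeta-z)$ along the unbounded curve $\Gamma$: the Cauchy integrals exist only through the regularized kernel and only modulo additive constants, so any contour-truncation argument must control both the truncation error and the regularization, which requires growth estimates near $\Gamma$ and near $\infty$ that you never establish. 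The M\"obius reduction does not repair this. First, the Cauchy kernel is not M\"obius invariant, so the change of variables produces correction terms of precisely the size being regularized away. Second, and decisively, for $p>1$ the claim that analytic Besov functions are ``continuous up to the boundary'' is false: they lie only in ${\rm BMOA}$, the critical Besov embedding into $L_\infty$ (let alone $C^0$) fails, and Privalov's theorem gives only almost-everywhere nontangential limits; so the classical Cauchy formula for functions continuous up to a rectifiable Jordan boundary is not applicable. Only for $p=1$, where Lemma \ref{W1} gives $\widehat B_1(\mathbb R)\subset C^0\cap L_\infty$, could your reduction conceivably be carried out.

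The paper avoids proving reproduction altogether, and this is the idea your proposal is missing. It glues the two candidates into functions on $\mathbb C\setminus\Gamma$: $\Psi_1$ built from $Q^\pm\psi$ on $\Omega^\pm$ and $\Psi_2$ built from $P^\pm_\Gamma\psi$ on $\Omega^\pm$. By \eqref{decomp1} and Proposition \ref{Plemelj}, respectively, both have the same jump $\psi$ across $\Gamma$; by the estimates of \cite[Section 3]{Se1}, both are locally integrable and of growth $o(|z|)$ at infinity. Hence $\bar\partial(\Psi_1-\Psi_2)=0$ in the distributional sense, $\Psi_1-\Psi_2$ is entire, and the growth bound forces it to be constant, yielding $P^\pm_\Gamma=Q^\pm$. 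Your reproducing property then follows as a corollary (apply the theorem to $\psi\in\widehat{\mathcal B}_p(\Omega^+)$, whose decomposition is $\psi=\psi+0$) rather than serving as an available tool. If you want to keep your outline, replace the Cauchy-formula step by this jump-plus-Liouville argument; alternatively, for $p>1$ the needed mapping property of $P^\pm_\Gamma$ can be quoted from \cite[Corollary 5.4]{LS2}, as the paper's own remark notes, but no such reference covers $p=1$, which is the main case of interest here.
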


\begin{proof}
Let $\phi=\gamma_*^{-1}(\psi)=\psi \circ \gamma$ for $\psi \in \widehat B_p(\gamma(\mathbb R))=\gamma_*(\widehat B_p(\mathbb R))$. Then,
$\phi \in \widehat B_p(\mathbb R)$ and $ P^\pm_{([\mu^+],[\mu^-])} \circ \gamma_*^{-1}(\psi)=\phi^\pm
\in C_{h^\pm}(\widehat {\mathcal B}_p(\mathbb H^\pm))$. Therefore,
$$
\gamma_* \circ P^\pm_{([\mu^+],[\mu^-])} \circ \gamma_*^{-1}(\psi) \in
\widehat {\mathcal B}_p(\Omega^\pm)
$$
since $\gamma=F^\pm \circ h^\pm$ and $\widehat {\mathcal B}_p(\Omega^\pm)=F^\pm_* (\widehat {\mathcal B}_p(\mathbb H^\pm))$
for the Riemann mappings $F^\pm:\mathbb H^\pm \to \Omega^\pm$.
By the definition of the projections $P^\pm_{([\mu^+],[\mu^-])}$, we have
\begin{equation}\label{decomp1}
\gamma_* \circ P^+_{([\mu^+],[\mu^-])} \circ \gamma_*^{-1}(\psi)+\gamma_* \circ P^-_{([\mu^+],[\mu^-])} \circ \gamma_*^{-1}(\psi)=\psi.
\end{equation}
Here, if we consider a measurable function $\Psi_1$ on $\mathbb C$ defined by holomorphic functions 
$\gamma_* \circ P^+_{([\mu^+],[\mu^-])} \circ \gamma_*^{-1}(\psi)$ on $\Omega^+$ and
$\gamma_* \circ P^-_{([\mu^+],[\mu^-])} \circ \gamma_*^{-1}(\psi)$ on $\Omega^-$,
it is locally integrable and is of growth order $\Psi_1(z)=o(|z|)$ as $z \to \infty$.

The Cauchy projections 
$P^\pm_\Gamma(\psi)$ satisfy the same properties as $\gamma_* \circ P^\pm_{([\mu^+],[\mu^-])} \circ \gamma_*^{-1}(\psi)$.
They are
holomorphic functions on $\Omega^\pm$ such that 
the boundary extension satisfies $P^+_\Gamma(\psi)+P^-_\Gamma(\psi)=\psi$ by Proposition \ref{Plemelj}. Moreover,
the measurable function $\Psi_2$ on $\mathbb C$ defined by $P^\pm_\Gamma(\psi)$ on $\Omega^\pm$
is locally integrable and is of growth order $\Psi_2(z)=o(|z|)$ as $z \to \infty$.
These follow from the arguments in \cite[Section 3]{Se1}.
Then, $\bar \partial$-derivative of $\Psi_1-\Psi_2$ is $0$ on $\mathbb C$
in the distribution sense, and hence it is holomorphic on $\mathbb C$. The growth order 
$\Psi_1(z)-\Psi_2(z)=o(|z|)$ as $z \to \infty$ forces it to be a constant function.
Thus, we have $P^\pm_\Gamma(\psi)=\gamma_* \circ P^\pm_{([\mu^+],[\mu^-])} \circ \gamma_*^{-1}(\psi)$.
\end{proof}

\begin{remark}
If we assume the fact that $P^\pm_\Gamma(\psi)$ belongs to ${\mathcal B}_p(\Omega^\pm)$
for $\psi \in B_p(\gamma(\mathbb R))$ taking a larger $p>1$ (see \cite[Corollary 5.4]{LS2}), 
then the proof of Theorem \ref{Cauchy} becomes easier.
Indeed, we have only to apply the direct sum decomposition \eqref{sum} to
$$
P^+_{([\mu^+],[\mu^-])}(\phi)-\gamma_*^{-1} \circ P^+_\Gamma \circ \gamma_*(\phi)=
\gamma_*^{-1} \circ P^-_\Gamma \circ \gamma_*(\phi)
-P^-_{([\mu^+],[\mu^-])}(\phi)
$$
for $\phi=\psi \circ \gamma \in B_p(\mathbb R)$.
\end{remark}

\begin{remark}
$P^\pm_\Gamma \circ \gamma_*=\gamma_* \circ P^\pm_{([\mu^+],[\mu^-])}$ maps $\widehat B_p(\mathbb R)$ onto 
$\widehat {\mathcal B}_p(\Omega^\pm)$ and ${\rm BMO}(\mathbb R)$ onto ${\rm BMOA}(\Omega^\pm)$.
When $[\mu^+]=[0]$ or $[\mu^-]=[0]$, this coincides with what is called the {\it Faber operator}
though the curve $\Gamma$ has assumed certain smoothness according to the function spaces in our case.
See \cite{LS1, LS2, WWH} for related arguments to ours. In our setting, we see not only the boundedness of the operator but
also its holomorphic dependence when the embeddings vary in the Teich\-m\"ul\-ler space as is discussed in the next section.
\end{remark}

This leads to {\it the Calder\'on theorem} (see \cite{CM0}) for Weil--Petersson curves. 
For chord-arc curves, the argument on the Bers coordinates for BMO embeddings in \cite{WM-2} can be 
also developed to the methods described here, which will appear in \cite{Ma}.
The boundedness of the Cauchy transform and projections in ${\rm BMO}(\gamma(\mathbb R))$ is verified in \cite{LS1}.
Results in a more general setting of
the Cauchy transform and projections in $B_p(\gamma(\mathbb R))$ for $p>1$ can be found in \cite{LS2}.

\begin{corollary}\label{Calderon}
The Cauchy projections $P_\Gamma^\pm$ on the $p$-Weil--Petersson curve $\Gamma=\gamma(\mathbb R)$ 
for $p \geq 1$ are associated with the topological direct sum decomposition
$$
\widehat B_p(\gamma(\mathbb R)) = \widehat {\mathcal B}_p(\Omega^+) \oplus \widehat {\mathcal B}_p(\Omega^-).
$$
\end{corollary}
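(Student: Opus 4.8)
The plan is to transport the topological direct sum decomposition of $\widehat B_p(\mathbb R)$ obtained in Remark \ref{direct} through the isometric isomorphism $\gamma_*$ and then read off that the Cauchy projections $P^\pm_\Gamma$ are precisely the associated projections. First I would recall that by the very definition of $\widehat B_p(\gamma(\mathbb R))$, the push-forward $\gamma_*:\widehat B_p(\mathbb R) \to \widehat B_p(\gamma(\mathbb R))$ is an isometric Banach isomorphism. Combined with Theorem \ref{main2} and Remark \ref{direct}, which give the topological direct sum
$$
\widehat B_p(\mathbb R)=C_{h^-}(\widehat {\mathcal B}_p(\mathbb H^-)) \oplus C_{h^+}(\widehat {\mathcal B}_p(\mathbb H^+)),
$$
applying $\gamma_*$ immediately yields a topological direct sum
$$
\widehat B_p(\gamma(\mathbb R)) = \gamma_*\bigl(C_{h^+}(\widehat {\mathcal B}_p(\mathbb H^+))\bigr) \oplus \gamma_*\bigl(C_{h^-}(\widehat {\mathcal B}_p(\mathbb H^-))\bigr),
$$
since an isomorphism carries topological direct sums to topological direct sums and closed subspaces to closed subspaces.

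The key identification is then to show that $\gamma_*\bigl(C_{h^\pm}(\widehat {\mathcal B}_p(\mathbb H^\pm))\bigr) = \widehat {\mathcal B}_p(\Omega^\pm)$ as subspaces of $\widehat B_p(\gamma(\mathbb R))$. Here I would use the factorization $\gamma = F^\pm \circ h^\pm$ from Proposition \ref{extnorm}. A generic element of $C_{h^\pm}(\widehat {\mathcal B}_p(\mathbb H^\pm))$ is the boundary function $E(\Phi^\pm) \circ h^\pm$ for $\Phi^\pm \in \widehat {\mathcal B}_p(\mathbb H^\pm)$; applying $\gamma_*$ and using $\gamma^{-1} = (h^\pm)^{-1} \circ (F^\pm)^{-1}$ gives
$$
\gamma_*\bigl(E(\Phi^\pm) \circ h^\pm\bigr) = E(\Phi^\pm) \circ (F^\pm)^{-1} = E_\Gamma^\pm\bigl((F^\pm)_*\Phi^\pm\bigr),
$$
which is exactly the generic element of $\widehat {\mathcal B}_p(\Omega^\pm)$ under the identification via $E_\Gamma^\pm$. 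Thus the transported summands are precisely $\widehat {\mathcal B}_p(\Omega^\pm)$, establishing the displayed decomposition.

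Finally, I would invoke Theorem \ref{Cauchy}, which states $P^\pm_\Gamma = \gamma_* \circ P^\pm_{([\mu^+],[\mu^-])} \circ \gamma_*^{-1}$. Since the projections $P^\pm_{([\mu^+],[\mu^-])}$ are by construction the bounded linear projections associated with the direct sum of $\widehat B_p(\mathbb R)$, their conjugates by the isomorphism $\gamma_*$ are the bounded linear projections associated with the pushed-forward direct sum of $\widehat B_p(\gamma(\mathbb R))$; that is, $P^\pm_\Gamma$ are precisely the projections onto $\widehat {\mathcal B}_p(\Omega^\pm)$ along $\widehat {\mathcal B}_p(\Omega^\mp)$. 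There is essentially no analytic obstacle here: the whole substance was already carried out in Theorem \ref{main2} (the topological nature of the decomposition) and Theorem \ref{Cauchy} (the conjugation formula). The only point requiring care is the bookkeeping of the two boundary-trace identifications $E^\pm$ and $E_\Gamma^\pm$ together with the factorization $\gamma = F^\pm \circ h^\pm$, so that the transported subspaces are correctly matched with $\widehat {\mathcal B}_p(\Omega^\pm)$.
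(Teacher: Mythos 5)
Your proposal is correct and follows essentially the same route as the paper, which derives Corollary \ref{Calderon} directly from the conjugation formula $P^\pm_\Gamma = \gamma_* \circ P^\pm_{([\mu^+],[\mu^-])} \circ \gamma_*^{-1}$ of Theorem \ref{Cauchy} together with the decomposition \eqref{sum} of Remark \ref{direct}. Your explicit verification that $\gamma_*\bigl(C_{h^\pm}(\widehat {\mathcal B}_p(\mathbb H^\pm))\bigr) = \widehat {\mathcal B}_p(\Omega^\pm)$ via $\gamma = F^\pm \circ h^\pm$ is exactly the bookkeeping the paper performs inside the proof of Theorem \ref{Cauchy}, so there is no gap.
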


\begin{remark}
Both the conjugate 
$C_{h^\pm} \circ P^\pm \circ C_{h^\pm}^{-1}$ of the Szeg\"o projections
and the {\it standardized} Cauchy projections $P^{\pm}_{([\mu^+],[\mu^-])}$ map ${\widehat B}_p(\mathbb R)$
onto $C_{h^\pm}({\mathcal B}_p(\mathbb H^\pm)) \subset {\widehat B}_p(\mathbb R)$, 
but they are different. In the special case $p=2$, ${B}_2(\mathbb R)$ is a Hilbert space and
we can think of the conjugate operator for any bounded linear operator. Then, the Szeg\"o projection is an orthogonal projection
and self-adjoint, but the Cauchy projection is not in general. They are related as follows:
\begin{equation}
(C_{h^\pm} \circ P^\pm \circ C_{h^\pm}^{-1})\circ (I+P_{([\mu^+],[\mu^-])}^\pm- P^{\pm \qquad \ast}_{([\mu^+],[\mu^-])})
=P_{([\mu^+],[\mu^-])}^\pm.
\end{equation}
This is the same as the {\it Kerzman--Stein formula} (see \cite{KS}) for $L_2(\mathbb R)$ and the Hardy space
${H}_2(\mathbb H^\pm)$.
\end{remark}

\section{Holomorphic dependence of the Cauchy transform}\label{11}

In this section, we consider how the Cauchy transforms ${\mathcal H}_\Gamma$ vary
when $\Gamma=\gamma(\mathbb R)$ move according to $\gamma=\gamma([\mu^+],[\mu^-])$.
To formulate this problem, we take the conjugate of ${\mathcal H}_\Gamma$ so that 
it acts on ${\widehat B}_p(\mathbb R)$.

Let $\gamma=\gamma([\mu^+],[\mu^-])$ be a $p$-Weil--Petersson embedding
for $([\mu^+],[\mu^-]) \in T_p^+ \times T_p^-$ with $p \geq 1$.
By Proposition \ref{Plemelj}, the relationship between the Cauchy transform ${\mathcal H}_\Gamma$
on the $p$-Weil--Petersson curve $\Gamma=\gamma(\mathbb R)$ and the Cauchy projections $P_\Gamma^\pm$ is given as
\begin{equation}\label{transform}
{\mathcal H}_\Gamma=-i(P_\Gamma^+-P_\Gamma^-).
\end{equation}
Moreover, Theorem \ref{Cauchy} shows $P^\pm_\Gamma=\gamma_* \circ P^\pm_{([\mu^+],[\mu^-])} \circ \gamma_*^{-1}$.
Then, pulling back ${\mathcal H}_\Gamma$ to $\mathbb R$ via $\gamma$, 
we define
\begin{equation}\label{HP}
{\mathcal H}_{([\mu^+],[\mu^-])}=-i( P^+_{([\mu^+],[\mu^-])} -P^-_{([\mu^+],[\mu^-])})
=\gamma_*^{-1} \circ {\mathcal H}_\Gamma \circ \gamma_*,
\end{equation}
which is a Banach automorphism of $\widehat B_p(\mathbb R)$.
We refer to ${\mathcal H}_{([\mu^+],[\mu^-])}$
as the standardization of the Cauchy transform ${\mathcal H}_\Gamma$.
More explicitly,
$$
{\mathcal H}_{([\mu^+],[\mu^-])}(\phi)(x)=
{\rm p.v.} \frac{1}{\pi} \int_{\Gamma} \frac{\phi \circ \gamma^{-1} (z)}{\gamma(x)-z}dz
=
{\rm p.v.} \frac{1}{\pi} \int_{-\infty}^\infty \frac{\phi(t)}{\gamma(x)-\gamma(t)}\gamma'(t)dt
\quad(x \in \mathbb R)
$$
for $\phi \in {\widehat B}_p(\mathbb R)$.

For $([0],[0]) \in T_p^+ \times T_p^-$, 
${\mathcal H}_{([0],[0])}$ coincides with the Hilbert transform $\mathcal H$. 
For $([\mu],[\bar \mu]) \in {\rm Sym}\,(T_p^+ \times T_p^-)$, 
${\mathcal H}_{([\mu],[\bar \mu])}$
is the conjugate of $\mathcal H$ by the composition operator $C_h$ for $h=\pi(\mu) \in T_p$. 
Indeed, \eqref{HP} applied to the case of $\Gamma=h(\mathbb R)=\mathbb R$ with 
$\gamma([\mu],[\bar \mu])=H(\mu)|_{\mathbb R}=h$ yields ${\mathcal H}_{([\mu],[\bar \mu])}=C_h \circ {\mathcal H} \circ C_h^{-1}$.

Let ${\mathcal L}(\widehat B_p(\mathbb R))$ be
the Banach space of all bounded linear operators $\widehat B_p(\mathbb R) \to \widehat B_p(\mathbb R)$
equipped with the operator norm. We consider the map
$$
\eta:T_p^+ \times T_p^- \to {\mathcal L}(\widehat B_p(\mathbb R))
$$
defined by $([\mu^+],[\mu^-]) \mapsto {\mathcal H}_{([\mu^+],[\mu^-])}$.

\begin{theorem}\label{conjugateH}
$\eta:T_p^+ \times T_p^- \to {\mathcal L}(\widehat B_p(\mathbb R))$ is holomorphic for $p \geq 1$.
\end{theorem}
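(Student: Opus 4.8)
The plan is to realize the standardized Cauchy transform $\mathcal{H}_{([\mu^+],[\mu^-])}$ as the conjugate of the \emph{fixed} Hilbert transform $\mathcal H$ by the derivative of $\Lambda$, and then to deduce holomorphy from the standard facts that the derivative of a Banach-space-valued holomorphic map is holomorphic and that inversion in a Banach algebra is holomorphic. Throughout, write $D = d_{([\mu^+],[\mu^-])}\Lambda$. By Theorem \ref{main2}, $D$ is a Banach automorphism of $\widehat B_p(\mathbb R)$ once the tangent space $\mathscr{T}_{([\mu^+],[\mu^-])}(T_p^+\times T_p^-)\cong \widehat{\mathcal B}_p(\mathbb H^-)\oplus\widehat{\mathcal B}_p(\mathbb H^+)$ and the target are both identified with $\widehat B_p(\mathbb R)$ through the decomposition of Proposition \ref{decomposition}. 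Since $\mathcal{H}_{([\mu^+],[\mu^-])}=-i(P^+_{([\mu^+],[\mu^-])}-P^-_{([\mu^+],[\mu^-])})$ by \eqref{HP}, the whole problem reduces to the holomorphic dependence of $D$ and $D^{-1}$.

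First I would record the conjugation identity. By \eqref{tangent} in Proposition \ref{image}, $D$ carries the factor $\widehat{\mathcal B}_p(\mathbb H^\pm)$ of the domain decomposition isomorphically onto the factor $C_{h^\pm}(\widehat{\mathcal B}_p(\mathbb H^\pm))$ of the target decomposition \eqref{sum}. An isomorphism matching one topological direct sum to another conjugates the associated projections: writing an arbitrary $w\in\widehat B_p(\mathbb R)$ as $w=Dv$ with $v=P^+v+P^-v$ and using $Dv^\pm\in C_{h^\pm}(\widehat{\mathcal B}_p(\mathbb H^\pm))$, one obtains
\[
P^\pm_{([\mu^+],[\mu^-])}=D\circ P^\pm\circ D^{-1},
\]
where $P^\pm$ are the fixed Szeg\"o projections of Proposition \ref{decomposition}. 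Combining this with $\mathcal H=-i(P^+-P^-)$ gives the key formula
\[
\mathcal{H}_{([\mu^+],[\mu^-])}=D\circ\mathcal H\circ D^{-1}.
\]

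It remains to prove that $([\mu^+],[\mu^-])\mapsto D$ and $([\mu^+],[\mu^-])\mapsto D^{-1}$ are holomorphic into $\mathcal L(\widehat B_p(\mathbb R))$. Identifying $T_p^\pm$ with open subsets of $\widehat{\mathcal B}_p(\mathbb H^\mp)$ via the pre-Bers embeddings (Proposition \ref{pre-Bers}), $\Lambda$ is a holomorphic map between open subsets of the complex Banach space $\widehat B_p(\mathbb R)$ by Lemma \ref{holomorphic}; hence its Fr\'echet derivative $([\mu^+],[\mu^-])\mapsto D$ is holomorphic into $\mathcal L(\widehat B_p(\mathbb R))$ in the operator-norm topology, a standard property of holomorphic maps between complex Banach spaces (see \cite{Ch}). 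Because $D$ is always invertible and inversion $A\mapsto A^{-1}$ is holomorphic on the open group of invertible elements of the Banach algebra $\mathcal L(\widehat B_p(\mathbb R))$, the map $([\mu^+],[\mu^-])\mapsto D^{-1}$ is holomorphic as well. Finally, $(A,B)\mapsto A\circ\mathcal H\circ B$ is a bounded bilinear map, hence holomorphic, so composing it with the holomorphic map $([\mu^+],[\mu^-])\mapsto(D,D^{-1})$ shows that $\eta=D\circ\mathcal H\circ D^{-1}$ is holomorphic, as required.

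The substantive inputs are all already secured: the isomorphy of $d\Lambda$ at every point (Theorem \ref{main2}), which permits forming $D^{-1}$ and guarantees that $D$ intertwines the two decompositions, together with the holomorphy of the derivative of a Banach-valued holomorphic function. I expect the only delicate point to be essentially notational, namely verifying that the identifications of the domain and the target of $D$ with $\widehat B_p(\mathbb R)$ are precisely the ones under which the fixed Szeg\"o projections $P^\pm$ correspond to $P^\pm_{([\mu^+],[\mu^-])}$; but once the factorwise matching in \eqref{tangent} is taken into account, this is routine.
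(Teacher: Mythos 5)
Your proof is correct and follows essentially the same route as the paper's: both express $P^\pm_{([\mu^+],[\mu^-])}$ as $d\Lambda$-conjugates of the fixed projections coming from the tangent-space splitting (your $D\circ P^\pm\circ D^{-1}$ is exactly the paper's $d\Lambda\circ J^\pm\circ(d\Lambda)^{-1}$ once the tangent space is identified with $\widehat B_p(\mathbb R)$ via Proposition \ref{decomposition}), and both then invoke the holomorphy of the derivative of the biholomorphic map $\Lambda$. Your explicit packaging of the conclusion as $\eta = D\circ\mathcal H\circ D^{-1}$ with Banach-algebra inversion is a minor cosmetic variant of the paper's use of $(d\Lambda)^{-1}=d_{\Lambda(\cdot)}\Lambda^{-1}$, not a different argument.
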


\begin{proof}
Due to formula \eqref{HP}, it suffices to show that $P^\pm_{([\mu^+],[\mu^-])}$ 
depend holomorphically on $([\mu^+],[\mu^-]) \in T_p^+ \times T_p^-$.
We consider the derivative $d_{([\mu^+],[\mu^-])}\,\Lambda$ of the biholomorphic map 
$\Lambda$ and its inverse 
$$
(d_{([\mu^+],[\mu^-])}\,\Lambda)^{-1}:
\widehat B_p(\mathbb R) \to {\mathscr T}_{([\mu^+],[\mu^-])}(T_p^+ \times T_p^-)=
{\mathscr T}_{[\mu^-]}(T_p^-) \oplus {\mathscr T}_{[\mu^+]}(T_p^+).
$$
According to this direct sum decomposition of the tangent space, we denote the canonical projections by
$$
J^+:{\mathscr T}_{([\mu^+],[\mu^-])}(T_p^+ \times T_p^-) \to {\mathscr T}_{[\mu^-]}(T_p^-),\quad
J^-:{\mathscr T}_{([\mu^+],[\mu^-])}(T_p^+ \times T_p^-) \to {\mathscr T}_{[\mu^+]}(T_p^+).
$$
Then, by the proof of formula \eqref{tangent} for Proposition \ref{image}, we see that
the standardizations of the Cauchy projections turn out to be
\begin{align}
P^\pm_{([\mu^+],[\mu^-])}&=d_{([\mu^+],[\mu^-])}\,\Lambda\circ J^\pm\circ (d_{([\mu^+],[\mu^-])}\,\Lambda)^{-1}
=d_{([\mu^+],[\mu^-])}\,\Lambda\circ J^\pm\circ d_{\Lambda([\mu^+],[\mu^-])}\,\Lambda^{-1}.
\end{align}
Since $\Lambda$ is biholomorphic, these operators depend holomorphically on $([\mu^+],[\mu^-]) \in T_p^+ \times T_p^-$.
\end{proof}

Corresponding results for BMO embeddings with chord-arc image are found in \cite{CM0}, 
though without involving the biholomorphic map $\Lambda$, only demonstrating holomorphic dependence near the origin. 
A complete extension is also possible, which is in \cite{Ma}.

As an application of the holomorphic dependence of ${\mathcal H}_{([\mu^+],[\mu^-])}$,
we consider the modification of the
Coifman--Meyer theorem which was originally proved for chord-arc curves.

We define $Z_p = i\widehat B_p(\mathbb R) \cap {\rm Ran}\,\Lambda$ as the real-analytic submanifold of ${\rm Ran}\,\Lambda$ consisting of purely imaginary-valued Besov functions (which is an open subset of the real Banach subspace
$i\widehat B_p(\mathbb R)$) as used in the proof of Theorem \ref{main2}. For $\psi \in Z_p$, 
$$
\gamma_0(x) = \int_0^x \exp\psi(t)dt \quad (x \in \mathbb R)
$$ 
serves as the {\it arc-length parameterization} of the $p$-Weil--Petersson curve $\gamma_0(\mathbb R)$.
In general, for any $p$-Weil--Petersson embedding 
$$
\gamma(x) = \int_0^x \exp \phi(t)dt
$$ 
determined by $\phi \in {\rm Ran}\,\Lambda$,
we take the quasisymmetric homeomorphism 
$$
h(x) = \int_0^x \exp({\rm Re}\,\phi(t))dt
$$ 
and set $\psi = i\, {\rm Im}\,\phi \circ h^{-1} \in Z_p$. 
Then, 
the arc-length parameter $\gamma_0$ determined by $\psi$ allows $\gamma$ to be expressed as the {\it reparameterization} of $\gamma_0$ by $h$, that is,
$\gamma = \gamma_0 \circ h$. See \cite[Lemma 4.2]{WM-4}.

We define $Y_p = \widehat {\mathcal B}_p(\mathbb H^+) \cap {\rm Ran}\,\Lambda$ as the complex submanifold of ${\rm Ran}\,\Lambda$ consisting of analytic Besov functions on $\mathbb H^+$. For $\varphi \in Y_p$, the corresponding $p$-Weil--Petersson embedding
$$
f(x) = \int_0^x \exp \varphi(t)dt \quad (x \in \mathbb R),
$$ 
when applied as above, can be expressed as the reparameterization of the arc-length parameter $\gamma_0$ by a quasisymmetric map $h$
so that
$f = \gamma_0 \circ h$.
This correspondence between $f$ and $\gamma_0$ is bijective, thus defining a mapping from $Z_p$ to $Y_p$,
which will be recognized later as a homeomorphism. 
Similarly, the correspondence from the pair $(f, \gamma_0)$ to their reparameterization $h$ allows 
the definition of a mapping from $Z_p$ to ${\rm Re}\,\widehat{B}_p(\mathbb R)$.

To investigate these mappings on $T_p^+ \times T_p^-$ through the biholomorphic homeomorphism $\Lambda$, we define
\begin{align*}
\rho&: T_p^+ \times T_p^- \to \{[0]\} \times T_p^-, \quad ([\mu^+],[\mu^-]) \mapsto ([0], [\mu^-] \ast [\overline{\mu^+}]^{-1});\\
\delta&: T_p^+ \times T_p^- \to {\rm Sym}\,(T_p^+ \times T_p^-), \quad ([\mu^+],[\mu^-]) \mapsto ([\mu^+],[\overline{\mu^+}]).
\end{align*}
Here, $\rho$ is continuous due to the topological group structure of $T_p$
given in Proposition \ref{group}, and $\delta$ is the projection to the symmetric axis, which is real-analytic. 
The unique decomposition 
$$
([\mu^+],[\mu^-]) = \rho([\mu^+],[\mu^-]) \ast \delta([\mu^+],[\mu^-])
$$
corresponds to the decomposition of the $p$-Weil--Petersson embedding $g=\gamma([\mu^+],[\mu^-])$ into $g=f \circ h$, where
$f=\gamma(\rho([\mu^+],[\mu^-]))$ is the boundary extension of the conformal homeomorphism of $\mathbb H^+$ to $\mathbb R$, and
$h=\gamma(\delta([\mu^+],[\mu^-]))$ is a quasisymmetric homeomorphism of $\mathbb R$.

We transform the two maps defined on the submanifolds in $\widehat B_p(\mathbb R)$ into those in $T_p^+ \times T_p^-$
by $\Lambda$.
The map $Z_p \to Y_p$ corresponds to
$$
\rho_0 = \rho|_{\Lambda^{-1}(Z_p)}: \Lambda^{-1}(Z_p) \to \{[0]\} \times T_p^-=\Lambda^{-1}(Y_p),
$$
and the map $Z_p \to {\rm Re}\,\widehat{B}_p(\mathbb R)$ corresponds to
$$
\delta_0 = \delta|_{\Lambda^{-1}(Z_p)}: 
\Lambda^{-1}(Z_p) \to {\rm Sym}\,(T_p^+ \times T_p^-)=\Lambda^{-1}({\rm Re}\,{\widehat B}_p(\mathbb R)).
$$

For the map $Z_p \to Y_p$ and $\rho_0$, we know the following.
This has been proved in \cite[Proposition 6.5]{WM-4} for $p>1$, and the proof is essentially the same in this case.

\begin{proposition}
$\rho_0:\Lambda^{-1}(Z_p) \to \{[0]\} \times T_p^-=\Lambda^{-1}(Y_p)$ is a homeomorphism
for $p \geq 1$.
\end{proposition}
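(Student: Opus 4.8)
The plan is to transport the assertion across the biholomorphic homeomorphism $\Lambda$ of Theorem \ref{main2} and reduce it to a statement purely about function spaces. By Theorem \ref{main2}, $\Lambda$ restricts to homeomorphisms of $\Lambda^{-1}(Z_p)$ onto $Z_p$ and of $\{[0]\}\times T_p^-=\Lambda^{-1}(Y_p)$ onto $Y_p$. Reading the conformal-welding decomposition $\gamma=f\circ h$, with $f=\gamma(\rho([\mu^+],[\mu^-]))$ and $h=\gamma(\delta([\mu^+],[\mu^-]))$, along an arc-length datum, I would first check that $\Lambda\circ\rho_0=\Theta\circ\Lambda$ on $\Lambda^{-1}(Z_p)$, where $\Theta\colon Z_p\to Y_p$ carries the log-derivative $\psi=\log\gamma_0'$ of the arc-length parametrization of a $p$-Weil--Petersson curve to the log-derivative $\varphi=\log f'$ of its conformal parametrization. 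Since $\Lambda$ is a homeomorphism, $\rho_0$ is a homeomorphism if and only if $\Theta$ is, so it suffices to treat $\Theta$.

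For continuity of $\Theta$ I would use that $\rho_0$ is already continuous: the formula $([\mu^+],[\mu^-])\mapsto([0],[\mu^-]\ast[\overline{\mu^+}]^{-1})$ defining $\rho$ is assembled from the reflection, the inversion and the composition $\ast$, all of which are continuous because $T_p^\pm\cong T_p$ is a topological group by Proposition \ref{group}; hence $\Theta=\Lambda\circ\rho_0\circ\Lambda^{-1}$ is continuous. The inverse is available explicitly from the construction preceding this proposition: $\Theta^{-1}\colon Y_p\to Z_p$ sends $\varphi$ to $i\,{\rm Im}\,\varphi\circ h_\varphi^{-1}$, where $h_\varphi\in T_p$ is determined by $\log h_\varphi'={\rm Re}\,\varphi$. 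This is well defined because ${\rm Re}\,\varphi\in{\rm Re}\,\widehat B_p(\mathbb R)$, so $h_\varphi\in T_p$ by Theorem \ref{theorem10}, whence $\gamma_0=f\circ h_\varphi^{-1}$ is again a $p$-Weil--Petersson embedding and $i\,{\rm Im}\,\varphi\circ h_\varphi^{-1}=\log\gamma_0'\in Z_p$. The reparametrization identities $\gamma=\gamma_0\circ h$ recorded before the proposition show that $\Theta$ and $\Theta^{-1}$ are mutually inverse, so $\Theta$, and therefore $\rho_0$, is a bijection onto $Y_p$, respectively onto $\{[0]\}\times T_p^-$.

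The only substantial point, and the place where the case $p=1$ differs, is the continuity of $\Theta^{-1}$, that is, the continuous dependence of the arc-length reparametrization on the conformal datum. I would factor $\Theta^{-1}$ through three continuous pieces: the bounded projections $\varphi\mapsto({\rm Re}\,\varphi,{\rm Im}\,\varphi)$; the assignment ${\rm Re}\,\varphi\mapsto h_\varphi^{-1}$, which is the inverse of the real-analytic diffeomorphism $\Lambda\circ\iota$ of Corollary \ref{structure} followed by the continuous inversion of the topological group $T_p$; and the evaluation $({\rm Im}\,\varphi,h_\varphi^{-1})\mapsto C_{h_\varphi^{-1}}({\rm Im}\,\varphi)$. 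The \emph{main obstacle} is this last factor: one must know that the composition operator $C_{h_\varphi^{-1}}$ is bounded on $\widehat B_p(\mathbb R)$ with norm controlled by $\log(h_\varphi^{-1})'$ and varying continuously with $h_\varphi^{-1}$. For $p>1$ this is Proposition \ref{p>1}, while for $p=1$ it is exactly Theorem \ref{composition}, which applies because $\log h_\varphi'={\rm Re}\,\varphi\in{\rm Re}\,\widehat B_1(\mathbb R)\subset{\dot W}^1_1(\mathbb R)$ by Lemma \ref{W1}. With this continuity in hand, $\Theta^{-1}$ is continuous, hence $\Theta$ and therefore $\rho_0$ are homeomorphisms; the structural skeleton is identical to that of \cite[Proposition 6.5]{WM-4}, the single new input being the $p=1$ composition estimate.
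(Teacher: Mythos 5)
Your skeleton is the same as the paper's: conjugate $\rho_0^{-1}$ by the biholomorphic map $\Lambda$ of Theorem \ref{main2}, identify the conjugate explicitly as $\varphi \mapsto C_{h_\varphi}^{-1}(i\,{\rm Im}\,\varphi)$ with $\log h_\varphi'={\rm Re}\,\varphi$, get the forward continuity from the topological group structure of Proposition \ref{group}, and reduce everything to the continuity of this inverse formula. Up to that point your argument is correct, and your bijectivity discussion (via Theorem \ref{theorem10} and the reparametrization identities) is fine.

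The gap is in the last factor of your factorization, which you yourself flag as the main obstacle but then dispose of incorrectly. Proposition \ref{p>1} and Theorem \ref{composition} give only the boundedness of each single operator $C_h$ on $\widehat B_p(\mathbb R)$, with an estimate of $\Vert C_h \Vert$; neither contains any statement that $C_h$ ``varies continuously with $h$,'' and no norm estimate alone can yield one. Concretely, to prove continuity of $\varphi \mapsto C_{h_\varphi^{-1}}(i\,{\rm Im}\,\varphi)$ at $\varphi$, take $\varphi_n \to \varphi$ in $Y_p$ and split
\begin{equation}
C_{h_{\varphi_n}^{-1}}(i\,{\rm Im}\,\varphi_n)-C_{h_{\varphi}^{-1}}(i\,{\rm Im}\,\varphi)
=C_{h_{\varphi_n}^{-1}}\bigl(i\,{\rm Im}\,\varphi_n-i\,{\rm Im}\,\varphi\bigr)
+\bigl(C_{h_{\varphi_n}^{-1}}-C_{h_{\varphi}^{-1}}\bigr)(i\,{\rm Im}\,\varphi).
\end{equation}
The first term is indeed controlled by the uniform norm bounds you cite. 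But the second term requires that, for a \emph{fixed} Besov function $\phi$, one has $\phi\circ h_{\varphi_n}^{-1}\to\phi\circ h_{\varphi}^{-1}$ in $\widehat B_p(\mathbb R)$ whenever $h_{\varphi_n}^{-1}\to h_{\varphi}^{-1}$ in $T_p$, i.e., strong continuity of $h\mapsto C_h$ (equivalently, joint continuity of the evaluation $(\phi,h)\mapsto C_h(\phi)$). This is precisely the paper's Lemma \ref{strong}, which rests on \cite[Theorem 6.3]{WM-4} and is a genuinely separate result; the paper even remarks that operator-norm continuity of $h\mapsto C_h$ is \emph{not} known, so the phrase ``varying continuously with $h_\varphi^{-1}$'' cannot be extracted from the boundedness theorems in any topology stronger than the strong operator topology, and even in that topology it needs a proof (for instance, uniform boundedness combined with convergence on a dense subspace such as $B_0(\mathbb R)$). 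As written, your final step does not follow from what you cite, and this is exactly the point where the paper brings in Lemma \ref{strong}.
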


\begin{proof}
Since the map $Z_p \to Y_p$ is bijective, so is $\rho_0$.
Since $\rho$ is continuous, so is $\rho_0$.
Hence, it suffices to show that $\rho_0^{-1}$ is continuous.
To this end, we consider the conjugation of $\rho_0^{-1}$ by $\Lambda$, that is,
$\Lambda \circ \rho^{-1}_0 \circ \Lambda^{-1}:Y_p \to Z_p$.
For any $\varphi \in Y_p$, we have
$$
\Lambda \circ \rho^{-1}_0 \circ \Lambda^{-1}(\varphi)=C_h^{-1}(i{\rm Im}\,\varphi)
$$
with $h(x)=\int_0^x \exp({\rm Re}\,\varphi(t))dt$. Since $h \mapsto h^{-1}$ is continuous in $T_p$
by Proposition \ref{group}, we see that
$\widehat{B}_p(\mathbb R) \times T_p \to \widehat{B}_p(\mathbb R)$ defined by
$(\phi,h) \mapsto C_h^{-1}(\phi)$ is continuous by Lemma \ref{strong} below. 
Thus, $\Lambda \circ \rho^{-1}_0 \circ \Lambda^{-1}$ is continuous.
\end{proof}

\begin{lemma}\label{strong}
The map $\widehat{B}_p(\mathbb R) \times T_p \to \widehat{B}_p(\mathbb R)$ defined by
$(\phi,h) \mapsto C_h(\phi)$ is continuous. In particular, a sequence of bounded linear operators
$C_{h_n}$ on $\widehat{B}_p(\mathbb R)$ converges to $C_h$ strongly as $h_n \to h$ in $T_p$.
\end{lemma}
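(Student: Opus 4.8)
The plan is to bypass any uniform operator-norm estimate or density argument and instead derive the joint continuity directly from the topological group structure of $T_p$ (Proposition \ref{group}) together with the fact that $L=\Lambda\circ\iota:T_p\to {\rm Re}\,\widehat B_p(\mathbb R)$, $h\mapsto\log h'$, is a homeomorphism (Corollary \ref{structure}). First I would reduce to real-valued arguments. Complex conjugation $\phi\mapsto\bar\phi$ is an antilinear isometry of $\widehat B_p(\mathbb R)$, since both the $B_p^{\#}$ semi-norm and the ${\rm BMO}$ semi-norm are insensitive to conjugation; hence the real and imaginary part maps $\phi\mapsto{\rm Re}\,\phi$ and $\phi\mapsto{\rm Im}\,\phi$ are bounded real-linear projections onto ${\rm Re}\,\widehat B_p(\mathbb R)$. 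Because $C_h$ is precomposition, $C_h(\phi)=C_h({\rm Re}\,\phi)+i\,C_h({\rm Im}\,\phi)$, so it suffices to prove that $(\phi,h)\mapsto C_h(\phi)$ is continuous on ${\rm Re}\,\widehat B_p(\mathbb R)\times T_p$.

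On the real part the key is the cocycle identity coming from the chain rule. For $w,h\in T_p$, writing $r_h(w)=w\circ h$ for the right translation, local absolute continuity and the a.e.\ chain rule give
\[
C_h(\log w')=\log(w\circ h)'-\log h',
\]
that is, $C_h(L(w))=L(r_h(w))-L(h)$. Given $\phi\in{\rm Re}\,\widehat B_p(\mathbb R)$, surjectivity of $L$ lets me write $\phi=L(w)$ with $w=L^{-1}(\phi)$ depending continuously on $\phi$, whence
\[
C_h(\phi)=L\big(r_h(L^{-1}(\phi))\big)-L(h).
\]
Each ingredient on the right is continuous: $(\phi,h)\mapsto(L^{-1}(\phi),h)$ since $L^{-1}$ is a homeomorphism; the group operation $(w,h)\mapsto w\circ h$ by Proposition \ref{group}; and $L$ itself. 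Thus $(\phi,h)\mapsto C_h(\phi)$ is continuous on ${\rm Re}\,\widehat B_p(\mathbb R)\times T_p$, hence by the reduction above on all of $\widehat B_p(\mathbb R)\times T_p$. The \emph{in particular} statement is then immediate: fixing $\phi$ and letting $h_n\to h$ in $T_p$ gives $C_{h_n}(\phi)\to C_h(\phi)$, i.e.\ the strong convergence $C_{h_n}\to C_h$.

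The main obstacle I anticipate is purely technical: justifying that $C_h(\log w')=\log(w\circ h)'-\log h'$ is an identity of elements of $\widehat B_p(\mathbb R)$, not merely a pointwise a.e.\ relation. This needs the a.e.\ chain rule for the composition of two strongly quasisymmetric homeomorphisms—valid because elements of $T_p\subset T_B$ are locally absolutely continuous with $A_\infty$-weight derivatives, so they carry null sets to null sets and differentiate by the chain rule a.e.—together with closure of $T_p$ under composition (Proposition \ref{group}), which guarantees $w\circ h\in T_p$ and hence $\log(w\circ h)'\in{\rm Re}\,\widehat B_p(\mathbb R)$. Once this is secured the three continuous maps compose cleanly. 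I would also emphasize that, in contrast to an alternative equicontinuity-plus-density proof, this route requires neither a uniform bound on $\Vert C_h\Vert$ in a neighborhood of a given $h$ nor the density of $B_0(\mathbb R)$ in $\widehat B_p(\mathbb R)$.
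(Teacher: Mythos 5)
Your proposal is correct, and it takes a genuinely different route from the paper's. The paper disposes of Lemma \ref{strong} by citation: it invokes \cite[Theorem 6.3]{WM-4} and the remark following it for $p>1$, and asserts that the same argument adapts to $p=1$. Your argument is instead internal to the present paper: it rests on the joint continuity of the group operation in $T_p$ (Proposition \ref{group}) and on the real-analytic diffeomorphism $L=\Lambda\circ\iota:T_p\to{\rm Re}\,\widehat B_p(\mathbb R)$ (Corollary \ref{structure}), combined through the cocycle identity $C_h(L(w))=L(w\ast h)-L(h)$, followed by the complexification $C_h(\phi)=C_h({\rm Re}\,\phi)+iC_h({\rm Im}\,\phi)$. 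This is legitimate and non-circular: both ingredients are established before Section \ref{11}, and neither Proposition \ref{group} (quoted from external sources) nor Corollary \ref{structure} (proved in Section \ref{9}) uses Lemma \ref{strong}. Moreover, the technical obstacle you flag---promoting the a.e.\ chain rule to an identity of elements of $\widehat B_p(\mathbb R)$---can be bypassed entirely: your identity is precisely relation \eqref{translation}, $\Lambda \circ R_{[\nu]}=Q_{h}\circ \Lambda$, restricted to the symmetric axis ${\rm Sym}\,(T_p^+\times T_p^-)$, which the paper already records (quoting \cite[Proposition 4.1]{WM-4}); alternatively, your chain-rule derivation does go through, since both sides lie in ${\rm Re}\,\widehat B_p(\mathbb R)$ (Theorem \ref{main1} applied to $w\ast h$ and to $h$) and agree almost everywhere. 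As for what each approach buys: yours needs no uniform bound on $\Vert C_h\Vert$, no density of $B_0(\mathbb R)$, and it even recovers qualitatively that $C_h$ preserves $\widehat B_p(\mathbb R)$; its cost is that it leans on the paper's main structural theorem, so it is available only at the end of the paper and yields no quantitative control of the operators, whereas the argument the paper outsources to \cite{WM-4} is more elementary in its inputs and carries norm estimates.
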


\begin{proof}
This can be seen in \cite[Theorem 6.3]{WM-4} and the remark
after that in the case $p>1$. The same argument can be applied to $p=1$.
\end{proof}

\begin{remark}
In the circumstances of the above lemma, 
we do not know whether or not $C_{h_n}$ converges to $C_h$ in the operator norm.
\end{remark}

Coifman and Meyer \cite{CM} investigated the map $\delta_0$ for 
the BMO embeddings with chord-arc image. We
translate their results to the case of Weil--Petersson embeddings by formalizing the following. 
The proof follows immediately from what has been demonstrated earlier. 
For $p>1$, this has been obtained in \cite[Theorem 5.4]{WM-4}.

\begin{theorem}
$\delta_0: \Lambda^{-1}(Z_p) \to {\rm Sym}\,(T_p^+ \times T_p^-) = \Lambda^{-1}({\rm Re}\,\widehat{B}_p(\mathbb R))$ is a real-analytic diffeomorphism onto its image for $p \geq 1$.
\end{theorem}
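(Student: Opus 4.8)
The plan is to prove that $\delta_0$ is real-analytic with a real-analytic two-sided inverse, transporting everything through the two real-analytic diffeomorphisms already in hand: $\Lambda|_{\Lambda^{-1}(Z_p)}\colon \Lambda^{-1}(Z_p)\to Z_p$ (the restriction of the biholomorphism of Theorem \ref{main2} to the real-analytic submanifold $\Lambda^{-1}(Z_p)$, where $Z_p$ is open in the real Banach space $i\,{\rm Re}\,\widehat B_p(\mathbb R)$) and $\Lambda|_{{\rm Sym}(T_p^+\times T_p^-)}\colon {\rm Sym}(T_p^+\times T_p^-)\to{\rm Re}\,\widehat B_p(\mathbb R)$ (Corollary \ref{structure}). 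Since $\delta$ is real-analytic, its restriction $\delta_0$ to the submanifold $\Lambda^{-1}(Z_p)$ is real-analytic; hence it suffices to exhibit a real-analytic inverse $\Theta$, the biholomorphic charts above guaranteeing that real-analyticity on the Teich\-m\"ul\-ler side transfers to the Besov side.

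To build $\Theta$, I would invert the relation coming from the decomposition $\gamma=\gamma([\mu^+],[\mu^-])=f\circ h$ with $f=\gamma([0],[\mu^-]\ast[\overline{\mu^+}]^{-1})$ conformal on $\mathbb H^+$ and $h=\pi(\mu^+)$. Taking real parts in $\log\gamma'=C_h(\log f')+\log h'$ and using that $C_h$ commutes with ${\rm Re}$, the arc-length condition ${\rm Re}\log\gamma'=0$ cutting out $\Lambda^{-1}(Z_p)$ becomes
\begin{equation*}
{\rm Re}\log f'=-C_h^{-1}(\log h')=\log (h^{-1})'.
\end{equation*}
Since $\log f'$ is the boundary value of an element of $\widehat{\mathcal B}_p(\mathbb H^+)$, it is recovered from its real part by the Szeg\"o projection, $\log f'=2P^+(\log (h^{-1})')$ up to an additive constant, with $P^+$ bounded by Proposition \ref{decomposition}. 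Given a point $([\mu^+],[\overline{\mu^+}])$ in the image, I would then set $\log h'=\Lambda([\mu^+],[\overline{\mu^+}])$ (real-analytic in $[\mu^+]$ by Corollary \ref{structure}), form $2P^+(\log(h^{-1})')$, recover $[\nu^-]:=(\Lambda([0],\cdot))^{-1}(2P^+(\log(h^{-1})'))$ on the open set where this lies in $Y_p$ (using that $\Lambda([0],\cdot)=E^+\circ\beta^-$ is biholomorphic onto $Y_p$ by Proposition \ref{pre-Bers}), and finally return $\Theta([\mu^+],[\overline{\mu^+}])=R_{[\mu^+]}([0],[\nu^-])=([\mu^+],[\nu^-]\ast[\overline{\mu^+}])$ via the biholomorphic right translation of Proposition \ref{group}. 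Relation \eqref{translation} together with the displayed identity then gives $\Theta\circ\delta_0={\rm id}$ and $\delta_0\circ\Theta={\rm id}$, so that the image of $\delta_0$ is precisely this open subset of ${\rm Sym}(T_p^+\times T_p^-)$.

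Injectivity of $\delta_0$ is in any case immediate from the same computation—$[\mu^+]$ (equivalently $h$) determines ${\rm Re}\log f'$, hence $\log f'$ and $f$, hence $\gamma=f\circ h$ and the point $\Lambda^{-1}(\log\gamma')$—which is also the bijectivity underlying the homeomorphism $\rho_0$. Assembling, $\delta_0$ and $\Theta$ are mutually inverse real-analytic maps, so $\delta_0$ is a real-analytic diffeomorphism onto its (open) image. The one genuinely non-formal ingredient, and the step I expect to be the main obstacle, is the real-analyticity of $[\mu^+]\mapsto\log(h^{-1})'$, i.e. of inversion in the topological group $T_p$ read in the chart $\Lambda\circ\iota$; every other constituent ($P^+$ bounded linear, $\Lambda([0],\cdot)$ and $R_{[\mu^+]}$ biholomorphic, $\Lambda\circ\iota$ real-analytic) is already available. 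For $p>1$ this regularity is contained in \cite[Theorem 5.4]{WM-4}; for $p=1$ it rests on the composition-operator bounds of Theorem \ref{composition} and the group structure of Proposition \ref{group}, which is exactly the point where the case $p=1$ must be carried by the tools developed earlier in the paper.
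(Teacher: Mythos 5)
Your reduction and your formula are both correct: unwinding your pipeline, with $\phi=\log h'$ one has $\log(h^{-1})'=-C_h^{-1}\phi$, hence
$C_h\bigl(2P^+(\log (h^{-1})')\bigr)+\log h'=-\,iC_h\circ\mathcal H\circ C_h^{-1}(\phi)=-i\mathcal H_{\Lambda^{-1}(\phi)}\phi$,
which is exactly the formula (*) in the proof of \cite[Theorem 5.4]{WM-4} that the paper invokes; so your $\Theta$ is indeed a two-sided inverse of $\delta_0$, and your injectivity remark is fine. The genuine gap is the step you flag yourself and then dismiss too quickly: the real-analyticity of $[\mu^+]\mapsto\log(h^{-1})'$, i.e.\ of inversion in $T_p$ read in the chart $\Lambda\circ\iota$. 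Neither of the tools you cite delivers this. Theorem \ref{composition} is a bound on $\Vert C_h\Vert$ for a \emph{fixed} $h$; Proposition \ref{group} says only that $T_p$ is a \emph{topological} group, i.e.\ continuity and nothing more. What your step needs is analytic dependence of $\phi\mapsto C_h^{-1}\phi$ on $h$, and the paper itself signals that this is out of reach: Lemma \ref{strong} gives only strong (pointwise) continuity of $h\mapsto C_h$, and the remark following it states explicitly that even operator-norm continuity of $h\mapsto C_h$ is unknown. Real-analyticity of inversion is far stronger, and by analogy with diffeomorphism groups in Sobolev-type topologies (where inversion is continuous but not smooth) it is plausibly false, not merely unproven. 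The same missing ingredient infects your final step: $([\mu^+],[\nu^-])\mapsto R_{[\mu^+]}([0],[\nu^-])=([\mu^+],[\nu^-]\ast[\overline{\mu^+}])$ requires joint analyticity of the group multiplication, which by relation \eqref{translation} again amounts to analyticity of $(h,\phi)\mapsto C_h(\phi)$ in both variables.

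The paper's proof is engineered precisely to avoid decomposing $\delta_0^{-1}$ into these individually non-analytic pieces. It writes the conjugated inverse in one stroke as $\Lambda\circ\delta_0^{-1}\circ\Lambda^{-1}(\phi)=-i\mathcal H_{\Lambda^{-1}(\phi)}\phi$ and appeals to Theorem \ref{conjugateH}: the standardized Cauchy transform $([\mu^+],[\mu^-])\mapsto\mathcal H_{([\mu^+],[\mu^-])}$ is holomorphic in the operator norm. Crucially, Theorem \ref{conjugateH} is not proven through composition operators at all; it comes from the identity $P^\pm_{([\mu^+],[\mu^-])}=d\Lambda\circ J^\pm\circ(d\Lambda)^{-1}$, so its analyticity is inherited from the biholomorphy of $\Lambda$ (Theorem \ref{main2}). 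In other words, the specific combination $C_h\circ\mathcal H\circ C_h^{-1}$ depends analytically on $h$ even though $C_h$ and $C_h^{-1}$ separately are not known to; your argument needs the separate pieces, the paper's only the combination. To repair your proof, collapse your three middle steps (inversion, $2P^+$, and the return via $R_{[\mu^+]}$) into this single operator identity and cite Theorem \ref{conjugateH} --- at which point your argument becomes the paper's.
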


\begin{proof}
Since $\delta_0$ is a real-analytic injection as is shown in the case $p>1$, 
it suffices to show that its inverse $\delta_0^{-1}$ is also real-analytic. 
As before, we consider the conjugate
$$
\Lambda \circ \delta^{-1}_0 \circ \Lambda^{-1}:{\rm Re}\,\widehat{B}_p(\mathbb R) \cap {\rm Ran}\,(\Lambda \circ \delta_0) \to Z_p.
$$
Then, for any $\phi \in {\rm Re}\,\widehat{B}_p(\mathbb R) \cap {\rm Ran}\,(\Lambda \circ \delta_0)$, we have
\begin{equation}
\Lambda \circ \delta^{-1}_0 \circ \Lambda^{-1}(\phi)=-i{\mathcal H}_{\Lambda^{-1}(\phi)} \phi
\end{equation}
by formula (*) in the proof of \cite[Theorem 5.4]{WM-4}. 
Because the standardized Cauchy transform ${\mathcal H}_{\Lambda^{-1}(\phi)}$
depends real-analytically on $\phi \in {\rm Re}\,\widehat{B}_p(\mathbb R)$
by Theorem \ref{conjugateH}, we see that $\Lambda \circ \delta^{-1}_0 \circ \Lambda^{-1}$ is real-analytic.
\end{proof}

%
%

\end{document}